\newtheorem{theorem}{Theorem}[section]
\newtheorem{lemma}[theorem]{Lemma}
\newtheorem{proposition}[theorem]{Proposition}
\newtheorem{corollary}[theorem]{Corollary}
\newtheorem*{main-result}{Main-Result}
{
%\theorembodyfont{\rmfamily}
\theoremstyle{definition}

\newtheorem{example}[theorem]{Example}

\newtheorem{remark}[theorem]{Remark}

}
\newcommand{\id}{\operatorname{id}}
\renewcommand{\dim}{\operatorname{dim}}
\newcommand{\cK}{\mathcal{K}}
\newcommand{\bigmid}{\;\Big{|}\;}
\newcommand{\cP}{\mathcal{P}}
\newcommand{\cJ}{\mathcal{J}}
\renewcommand{\cR}{\mathcal{R}}
\newcommand{\C}{{\mathbb{C}}}
\newcommand{\N}{{\mathbb{N}}}
\newcommand{\F}{{\mathbb{F}}}
\newcommand{\Q}{{\mathbb{Q}}}
\newcommand{\R}{{\mathbb{R}}}
\newcommand{\Hom}{\operatorname{Hom}}
\newcommand{\la}{\lambda}
\renewcommand{\a}{\alpha}
\newcommand{\becircled}{\mathaccent "7017}
\newcommand{\A}{\mathbb{A}}
\newcommand{\PA}{\P_{\!\cA}}
\newcommand{\cA}{\mathcal{A}}
\newcommand{\cE}{\mathcal{E}}
\renewcommand{\cD}{\mathcal{D}}
\newcommand{\cZ}{\mathcal{Z}}
\newcommand{\cF}{\mathcal{F}}
\newcommand{\cO}{\mathcal{O}}
\newcommand{\cQ}{\mathcal{Q}}
\newcommand{\cG}{\mathcal{G}}
\renewcommand{\cL}{\mathcal{L}}
\renewcommand{\and}{\qquad\text{and}\qquad}
\newcommand{\Aut}{\operatorname{Aut}}
\newcommand{\Conf}{\operatorname{Conf}}
\newcommand{\Sym}{\operatorname{Sym}}
\newcommand{\OT}{\operatorname{OT}}
\newcommand{\SR}{\operatorname{SR}}
\newcommand{\OTbar}{\overline{\operatorname{OT}}}
\newcommand{\SRbar}{\overline{\operatorname{SR}}}
\renewcommand{\P}{\mathbb{P}}
\newcommand{\Gm}{\mathbb{G}_m}
\newcommand{\Supp}{\operatorname{Supp}}
\newcommand{\beq}{\begin{eqnarray*}}
\newcommand{\eeq}{\end{eqnarray*}}
\newcommand{\rk}{\operatorname{rk}}
\newcommand{\crk}{\operatorname{crk}}
\newcommand{\fS}{\mathfrak{S}}
\begin{document}
\spacing{1.2}
\noindent{\Large\bf The geometry of zonotopal algebras II:\\  Orlik--Terao algebras and Schubert varieties
}\\

\noindent{\bf Colin Crowley}\footnote{Supported by NSF grant DMS-2039316.}\\
Department of Mathematics, University of Oregon, Eugene, OR 97403\\
and School of Mathematics, Institute for Advanced Study, Princeton, NJ 08540
\vspace{.1in}

\noindent{\bf Nicholas Proudfoot}\footnote{Supported by NSF grants DMS-1954050, DMS-2053243, and DMS-2344861.}\\
Department of Mathematics, University of Oregon, Eugene, OR 97403\\
% and All Souls College, University of OXford, Oxford, OX1 4AL\\

{\small
\begin{quote}
\noindent {\em Abstract.} 
Zonotopal algebras, introduced by Postnikov--Shapiro--Shapiro, Ardila--Postnikov, and Holtz--Ron, 
show up in many different contexts, including approximation theory, representation theory, Donaldson--Thomas theory,
and hypertoric geometry.  
In the first half of this paper, we construct a perfect pairing between the internal zonotopal algebra of a linear space
and the reduced Orlik--Terao algebra of the Gale dual linear space.  
As an application, we prove a conjecture of Moseley--Proudfoot--Young that
relates the reduced Orlik--Terao algebra of a graph to the cohomology of a certain configuration space.  
In the second half of the paper, we interpret
the Macaulay inverse system of a zonotopal algebra as the space of sections of a sheaf on the Schubert variety of a linear space.
As an application of this, we use an equivariant resolution of the structure sheaf of the Schubert variety inside of a product of projective lines
to produce an exact sequence relating internal and external zonotopal algebras.
\end{quote} }

\section{Introduction}
Zonotopal algebras come in three flavors:  central, internal, and external.  Central zonotopal algebras and their duals play an important role in approximation theory,
dating back to the work of de Boor and H\"ollig \cite{dBH}; they have also arisen in the contexts of hypertoric geometry \cite{HS} and Donaldson--Thomas theory \cite{RRT}.  
Internal zonotopal algebras also have applications to approximation theory \cite{Lenz}, and to graphical configuration spaces \cite{CDBHP}.
External zonotopal algebras were introduced by Postnikov, Shapiro, and Shapiro in the context of differential forms on flag manifolds \cite{PSS}.
These three types of algebras were combined and regarded as parts of a single theory by Ardila and Postnikov \cite{ArPo}, and independently by Holtz and Ron \cite{Zonotopal}, who gave them their names.

Another important algebra associated with the same data is the Orlik--Terao algebra \cite{OT-OT}.
This algebra has connections to linear programming \cite{SSV}, linear codes \cite{Tohaneanu-Hamming}, Kazhdan--Lusztig theory of matroids \cite{EPW},
and the theory of resonance varieties \cite{Schenck-OT}; it also plays a role in the categorification of the Tutte polynomial in \cite{Berget}.  Our first main result is that the internal zonotopal algebra of a linear space 
is canonically isomorphic to the dual of the reduced Orlik--Terao algebra of the Gale dual linear space (Theorem \ref{internal}).
We use this to prove a conjecture of Moseley, Proudfoot, and Young \cite{MPY} relating the Orlik--Terao algebra 
of a graphical arrangement with the cohomology ring of a certain
configuration space (Corollary \ref{graph}), which has previously been proved only for the complete graph \cite{Pagaria}.

The Schubert variety of a linear space was introduced by Ardila and Boocher \cite{ArBoo},
and was used to prove non-negativity of the Kazhdan--Lusztig polynomial and $Z$-polynomial of a realizable matroid
\cite{EPW,PXY} and the Dowling--Wilson conjecture for realizable matroids \cite{HW}.  When the linear space arises from the Coxeter arrangement of a Lie algebra as in Remark \ref{hyperplanes}, 
the fundamental group of the real locus of the Schubert variety coincides with the pure virtual Weyl group
of the Lie algebra \cite{IKLPR,Jiang}.  We interpret the Macaulay inverse systems of all of the zonotopal algebras
as groups of sections of certain sheaves on the Schubert variety (Theorem \ref{Schubert}), which 
naturally appear in the minimal free resolution of the structure sheaf of the Schubert variety inside of a product of projective lines (Theorem \ref{resolution}).
This allows us to construct an exact sequence relating internal and external zonotopal algebras (Corollary \ref{cohomology}), 
equivariant with respect to any automorphisms of the arrangement, categorifying
a numerical formula for the number of spanning sets of a matroid (Remark \ref{spanning}).

\subsection{Background}\label{sec:background}
The basic data structure with which we will work is that of a triple $\cA = (E, V, L)$, where $E$ is a finite set, $V = \bigoplus_{e\in E} V_e$ is a vector space over a field $\F$
of characteristic zero
equipped with a decomposition into lines indexed by $E$, and $L\subset V$ is a linear subspace.  We will refer to such a triple as a {\bf linear space}.
We define the {\bf Gale dual} $\cA^! := (E, V^*, L^\perp)$.  The linear space $\cA$ has an associated matroid $M_\cA$ of rank equal to the dimension of $L$, and $M_{\cA^!}$ is the dual of $M_\cA$.
An automorphism of $\cA$ is by definition a linear automorphism of $V$ that preserves the decomposition into lines (possibly permuting them) and takes $L$ to itself.
Note that the automorphism group $\Aut(\cA)$ contains a distinguished copy of the multiplicative group $\Gm$, acting by scalar multiplication.  There is a canonical isomorphism
$\Aut(\cA)\cong\Aut(\cA^!)$ that restricts to inversion on $\Gm$.

\begin{remark}\label{hyperplanes}
Suppose that we are given a vector space $L$ and a multiset $\{H_e\mid e\in E\}$ of hyperplanes indexed by $E$ that intersect only in the origin.  Then we may define a linear
space $\cA$ by putting $V_e := L/H_e$.  An automorphism of $\cA$ consists of a pair $(\sigma,\tau)$, where $\sigma$ is a linear automorphism of $L$ and $\tau$ is a permutation of $E$ such
that $\sigma(H_e) = H_{\tau(e)}$ for all $e\in E$.  An arbitrary linear space $\cA$ arises via this construction if and only if the projection $L\to V_e$ is surjective for all $e$, or equivalently
if the matroid $M_\cA$ is loopless.  The main reason that we choose to work with linear spaces rather than hyperplane arrangements is that looplessness is not preserved by duality.
\end{remark}

For any element $\a\in L$, let $m(\a)$ be the number of coordinates $e\in E$ such that the projection $\a_e\in V_e$ is nonzero, and let
$\rho$ be the minimum value of $m(\a)$ for $0\neq \a\in L$.  For any integer $k\geq -(\rho+1)$, Ardila and Postnikov \cite{ArPo,ArPo-correction} define the ring 
$$R_{\cA,k} := \Sym L\Big{/} \left\langle \a^{m(\a)+k+1}\mid 0\neq \a\in L\right\rangle.$$
Examples of these rings include the three {\bf zonotopal algebras} introduced and studied by Holtz and Ron \cite{Zonotopal}:
\begin{align*}
\text{the {internal zonotopal algebra}}\; \cR_-(\cA) &= R_{\cA,-2},\\
\text{the {central zonotopal algebra}}\; \cR(\cA) &= R_{\cA,-1},\\
\text{and the {external zonotopal algebra}}\; \cR_+(\cA) &= R_{\cA,0}.
\end{align*}

\begin{remark}
The external zonotopal algebra $\cR_+(\cA)$ was originally defined by Postnikov, Shapiro, and Shapiro \cite{PSS}.
Their primary motivation came from the case where $\cA$ arises from the Coxeter arrangement of a Lie algebra via Remark \ref{hyperplanes}, in which case they showed that the
external zonotopal algebra is isomorphic to the subalgebra of differential forms on the flag manifold generated by curvature forms of Hermitian line bundles. 
\end{remark}

\begin{remark}\label{zone}
The word ``zonotopal'' derives from the special case where $\cA$ is {\bf unimodular}:  
this means that $L^*$ is equipped with a lattice $\Lambda$,
the projection $L\to V_e\cong \F$ is given by a primitive element $\chi_e\in\Lambda$, and any subset of the vectors $\{\chi_e\mid e\in E\}$
spans a saturated sublattice of $\Lambda$.
Consider the {\bf zonotope} 
$$Z(\cA) := \left\{\sum c_H \chi_H\bigmid \text{$0\leq c_H\leq 1$ for all $H\in\cA$}\right\}\subset\Lambda\otimes\R.$$
We denote its interior by $\becircled Z(\cA)$, and we also consider a translation $Z(\cA,\lambda) := Z(\cA) + \lambda$ by some 
generic\footnote{Here ``generic'' means that $Z(\cA,\lambda)$ contains no lattice points on its boundary.} element 
$\lambda\in\Lambda\otimes\R$.
Intersecting with $\Lambda$, we obtain finite subsets
\begin{align*}
\cZ_+(\cA) &:= Z(\cA)\cap \Lambda,\\
\cZ(\cA,\lambda) &:= Z(\cA,\lambda)\cap \Lambda,\\
% \text{and}\; 
\cZ_-(\cA) &:= \becircled Z(\cA)\cap \Lambda.
\end{align*}
Given a finite subset $S\subset L^*$, its {\bf orbit harmonics ring} $\cR(S)$ is defined to be the coordinate ring of the scheme-theoretic limit
$$\lim_{\epsilon\to 0} (\epsilon\cdot S);$$ 
this may also be characterized as the associated graded ring of the filtration on $\F[S]$ whose $i^\text{th}$ piece is the image of $\Sym^{\leq i}L$
under the restriction of functions from $L^*$ to $S$.
Then we have canonical isomorphisms \cite[Theorems 5.10, 3.9, 4.11]{Zonotopal}\footnote{See \cite[Proposition 1.1]{CDBHP} for an alternate proof in the internal case, which extends to the other two cases.}
$$\cR_-(\cA) \cong \cR(\cZ_-(\cA)),\qquad \cR(\cA) \cong \cR(\cZ(\cA,\la)),\and
\cR_+(\cA) \cong \cR(\cZ_+(\cA)).$$
\end{remark}

Let $C_{\cA,k}\subset \Sym L^*$ be the linear dual of $R_{\cA,k}$, also known as the {\bf Macaulay inverse system} of the defining ideal of $R_{\cA,k}$.  
Concretely, we may think of elements of $\Sym L^*$ as polynomials and elements of $\Sym L$
as differential operators, in which the natural pairing between $\Sym L^*$ and $\Sym L$ is given by
applying the differential operator to the polynomial and then evaluating the resulting polynomial at $0\in L$.  With this interpretation, we may write
$$C_{\cA,k} = \left\{f\in \Sym L^*\bigmid  \text{$\a^{m(\a)+k+1}\cdot f = 0$ for all $0\neq \a\in L$}\right\}.$$
Holtz and Ron use the notation $$\cP_-(\cA) := C_{\cA,-2},\qquad \cP(\cA) := C_{\cA,-1}\and \cP_+(\cA) := C_{\cA,0}.$$

\begin{remark}\label{convention}
We will use the convention that the gradings on all of our vector spaces are induced by the inverse action of $\Gm\subset\Aut(\cA)$.
Thus $\Sym L^*$ is generated in degree 1, $\Sym L$ is generated in degree $-1$, and all nontrivial pairings occur between classes in opposite degrees.
This is consistent with thinking of elements of $\Sym L^*$ as polynomials and elements of $\Sym L$ as differential operators.
\end{remark}

The in the central ($k=-1$) case, we have some beautiful additional structure, namely a canonical ring structure on $\cP(\cA)$,
which we now describe.  
For each $e\in E$, choose an isomorphism $V_e\cong\F$, and let $\chi_e\in L^*$ denote the projection from $L$ to $V_e$.
For any subset $S\subset E$, let 
$$\chi_S := \prod_{e\in S} \chi_e\in \Sym L^*.$$
Consider the ideal\footnote{Holtz and Ron call this ideal $\cJ(\cA)$, but this notation would lead to difficulty for us in Sections \ref{sec:deformations} and \ref{sec:sheaves}.  Similarly, the vector space $\cD(\cA^!)$ in the next paragraph is called $\cD(\cA)$ by Holtz and Ron.} $\cJ(\cA^!)\subset \Sym L^*$ generated by polynomials $\chi_S$ for all subsets $B\subset S$ that are dependent
for the matroid $M_{\cA^!}$.  The quotient ring $\Sym L^*/\cJ(\cA^!)$ is
the reduced Stanley--Reisner algebra $\SRbar(\cA^!)$ of the independence complex of $M_{\cA^!}$.\footnote{The Stanley--Reisner algebra is a quotient
of $\Sym V^*$, and the reduced Stanley--Reisner algebra is the quotient of the Stanley--Reisner algebra by the linear system of parameters $L^\perp\subset V^*$,
which is a quotient of $\Sym L^*$.}  
(See Remark \ref{hypertoric} for an interpretation of this ring in the context of hypertoric geometry.)

Let $\cD(\cA^!)\subset \Sym L$ denote the Macaulay inverse system of $\cJ(\cA^!)$, or equivalently the linear dual of $\SRbar(\cA^!)$.  
When $\F=\R$, $\cD(\cA^!)$ is equal to the {\bf Dahmen--Micchelli space}, 
which is the $\Sym L^*$-submodule of $\Sym L$ spanned by polynomials that coincide on some open subset of $L^*$
with the multivariate spline associated with $\cA$ \cite[Theorem 11.37]{DCP}.  
The Dahmen--Micchelli space is of fundamental importance in approximation theory; it was first studied by de Boor and H\"ollig \cite{dBH}, 
and it is one of the main characters in the book \cite{DCP}.  See \cite[Section 1.2]{Zonotopal} for a more detailed survey of the literature on the space $\cD(\cA^!)$.

The following theorem  was proved by several authors 
\cite{AS88,DR90,J93}, and appears implicitly in \cite{BV99} and the 
references therein. See \cite[Corollary 11.23]{DCP} or \cite[Theorem 
  3.8]{Zonotopal} for a nice exposition.

\begin{theorem}\label{central}
We have $\Sym L^* = \cP(\cA) \oplus \cJ(\cA^!)$.  Equivalently, the compositions
$$\cP(\cA) \hookrightarrow \Sym L^* \twoheadrightarrow \SRbar(\cA^!)
\and
\cD(\cA^!)\hookrightarrow \Sym L \twoheadrightarrow \cR(\cA)$$
are isomorphisms, and the graded vector spaces $\cP(\cA)\cong\SRbar(\cA^!)$ are dual to $\cD(\cA^!)\cong\cR(\cA)$.
\end{theorem}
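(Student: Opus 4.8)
The plan is to establish the first assertion, $\Sym L^* = \cP(\cA)\oplus\cJ(\cA^!)$; the reformulations then follow formally. Under the perfect pairing between $\Sym L^*$ and $\Sym L$, a decomposition $\Sym L^* = A\oplus B$ with $B$ an ideal dualizes to $\Sym L = A^\perp\oplus B^\perp$; taking $A=\cP(\cA)$ and $B=\cJ(\cA^!)$ and using that $\cP(\cA)$ and $\cD(\cA^!)$ are by definition the Macaulay inverse systems of the (finite-codimension) ideals $I:=\langle\a^{m(\a)}\mid 0\neq\a\in L\rangle$ and $\cJ(\cA^!)$ — so that $\cP(\cA)^\perp=I$ and $\cJ(\cA^!)^\perp=\cD(\cA^!)$ — this yields precisely the two displayed isomorphisms and the duality statement. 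To prove the decomposition I would combine a Hilbert-series count with the injectivity of the composite $\cP(\cA)\hookrightarrow\Sym L^*\twoheadrightarrow\SRbar(\cA^!)$, i.e.\ with $\cP(\cA)\cap\cJ(\cA^!)=0$: these two facts force $\cP(\cA)_d$ to be a complement to $\cJ(\cA^!)_d$ in $(\Sym L^*)_d$ in every degree.

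For the count: $\cP(\cA)$ is the graded dual of the central zonotopal algebra $\cR(\cA)$, and its Hilbert series is the classical Tutte evaluation $q^{|E|-\rk M_\cA}\,T_{M_\cA}(1,q^{-1})$ — for unimodular $\cA$ one reads this off the half-open decomposition of a generic lattice translate of the zonotope (Remark~\ref{zone}), and in general it is computed directly from the cocircuit generators of $I$. On the other side, $\SRbar(\cA^!)$ is the Artinian reduction of the Stanley--Reisner ring of the independence complex $\Delta$ of the matroid $M_{\cA^!}$, which is Cohen--Macaulay, so its Hilbert series is $\sum_i h_i(\Delta)\,q^i = q^{\rk M_{\cA^!}}\,T_{M_{\cA^!}}(q^{-1},1)$ by Björner's formula for the $h$-vector of a matroid complex; since $\rk M_{\cA^!}=|E|-\rk M_\cA$ and $T_{M_{\cA^!}}(q^{-1},1)=T_{M_\cA}(1,q^{-1})$ by Tutte duality, this agrees with the Hilbert series of $\cP(\cA)$. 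Hence $\dim\cP(\cA)_d=\dim\SRbar(\cA^!)_d$ for all $d$, and it remains only to prove $\cP(\cA)\cap\cJ(\cA^!)=0$.

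This injectivity is the heart of the matter. I would first pass to minimal generators: $\cJ(\cA^!)=\langle\chi_{E\setminus H}\mid H\text{ a hyperplane of }M_\cA\rangle$ (the circuits of $M_{\cA^!}$ are exactly the complements of hyperplanes of $M_\cA$), and, by induction on $\rk M_\cA$ via contraction, $I=\langle\a_H^{|E\setminus H|}\mid H\text{ a hyperplane of }M_\cA\rangle$, where $\a_H$ spans the line $\bigcap_{e\in H}\Ker\chi_e\subset L$ and $m(\a_H)=|E\setminus H|$; note that $\partial_{\a_H}^{|E\setminus H|}(\chi_{E\setminus H})$ is a nonzero scalar while $\partial_{\a_H}$ involves only the coordinates $\chi_e$ with $e\notin H$. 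I would then induct on $|E|$. If $e$ is a loop of $M_\cA$ then $\chi_e=0$ and neither $\cP(\cA)$ nor $\cJ(\cA^!)$ changes upon replacing $\cA$ by $\cA\setminus e$; if $e$ is a coloop then $\chi_e\in\cJ(\cA^!)$, and one identifies $\SRbar(\cA^!)$ with $\SRbar((\cA/e)^!)$ and $\cP(\cA)$ with $\cP(\cA/e)$; either way $|E|$ drops. For $e$ neither a loop nor a coloop, I would fit the standard deletion--contraction short exact sequences for the central zonotopal algebras and for the reduced Stanley--Reisner rings (the latter refining $h(\Delta_M;t)=h(\Delta_{M\setminus e};t)+t\,h(\Delta_{M/e};t)$) into a commutative ladder over the maps into $\Sym L^*$ and conclude by the five lemma. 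The main obstacle is exactly the construction of these sequences and the verification that the ladder commutes: one must describe $\cP(\cA)$ explicitly relative to $\cP(\cA\setminus e)$ and $\cP(\cA/e)$ — via $\partial_{\a_H}$ for a hyperplane $H$ through $e$ — and keep careful track of the one-step degree shifts. An alternative avoiding the induction is a direct argument: for $0\neq f\in\cJ(\cA^!)$, write $f=\sum_H g_H\chi_{E\setminus H}$, choose a hyperplane $H_0$ with $|E\setminus H_0|$ least among the terms present, and use the duality of the two generating sets together with control of the cross-terms $\partial_{\a_{H_0}}^{|E\setminus H_0|}(g_H\chi_{E\setminus H})$ for $H\neq H_0$ to show $\partial_{\a_{H_0}}^{|E\setminus H_0|}f\neq 0$, so that $f\notin\cP(\cA)$.
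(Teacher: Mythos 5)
The paper does not give its own proof of Theorem~\ref{central}; it cites \cite{AS88,DR90,J93} and points to \cite[Corollary 11.23]{DCP} and \cite[Theorem 3.8]{Zonotopal} for an exposition. So there is no in-paper argument to compare against, only the question of whether your proposal is a complete proof.

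Your overall strategy is sound and standard: a dimension count in each degree together with injectivity of $\cP(\cA)\hookrightarrow\Sym L^*\twoheadrightarrow\SRbar(\cA^!)$ forces the direct sum, and the dual reformulations follow formally from $\cP(\cA)=I^\perp$ and $\cD(\cA^!)=\cJ(\cA^!)^\perp$. The Hilbert-series bookkeeping is correct: $\SRbar(\cA^!)$ has Hilbert series $q^{\rk M_{\cA^!}}T_{M_{\cA^!}}(q^{-1},1)$, and the matroid Tutte duality $T_{M^*}(x,y)=T_M(y,x)$ together with $\rk M_{\cA^!}=|E|-\rk M_\cA$ matches the well-known Tutte evaluation for the Hilbert series of $\cR(\cA)$. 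The identification of minimal generators — $\cJ(\cA^!)$ by $\chi_{E\setminus H}$ for hyperplanes $H$ of $M_\cA$, and $I$ by the corresponding cocircuit powers $\a_H^{|E\setminus H|}$ — is also correct (the latter is \cite[Lemma~1]{ArPo-correction}).

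The genuine gap is that the crux of the theorem, $\cP(\cA)\cap\cJ(\cA^!)=0$, is not actually proved. You lay out a deletion--contraction ladder strategy and then acknowledge that constructing the two short exact sequences and checking the ladder commutes ``is exactly the main obstacle''; that is the entire substance of the theorem, so deferring it leaves the proof incomplete. The alternative ``direct argument'' at the end is too vague to assess: there is no reason $\partial_{\a_{H_0}}^{|E\setminus H_0|}$ should annihilate the other summands $g_H\chi_{E\setminus H}$ or avoid cancellation with the $H_0$ term, and the heuristic ``choose $H_0$ with $|E\setminus H_0|$ least'' does not obviously control these cross-terms. A secondary point: once the deletion--contraction ladder is in place, the five lemma gives the isomorphism outright (with a trivial base case), so the separate Hilbert-series comparison becomes redundant; conversely, if one wants to rely on the Hilbert-series route, one should be careful that the cited Hilbert-series formula for $\cP(\cA)$ is established independently of the decomposition being proved — your justification (``computed directly from the cocircuit generators of $I$'') is too thin to rule out circularity without a citation.
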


It is natural to ask whether there is an analogue of Theorem \ref{central} in the internal or external settings.
Indeed, Holtz and Ron define homogeneous ideals
$$\cJ_-(\cA^!), \cJ_+(\cA^!) \subset \Sym L^*$$
and prove that we have \cite[Theorems 4.10 and 5.9]{Zonotopal}
$$\cP_-(\cA) \oplus \cJ_-(\cA^!) = \Sym L^* = \cP_+(\cA) \oplus \cJ_+(\cA^!).$$
The drawback to these results is that the ideals $\cJ_\pm(\cA^!)$ depend on choices.  For example, $\cJ_-(\cA^!)$
depends on a choice of ordering of $E$, and the quotient $\Sym L^*/\cJ_-(\cA^!)$ is the reduced Stanley--Reisner
algebra of the broken circuit complex of $M_{\cA^!}$.  (The broken circuit complex, unlike the independence complex, depends on an ordering.) 
The ideal $\cJ_+(\cA^!)$ depends on both an ordering and a choice of basis for $L^*$ that is suitably generic with respect to $\cA^!$.
In particular, the group $\Aut(\cA) = \Aut(\cA^!)$ acts on the rings $\cR_{\pm}(\cA)$
and on their linear duals $\cP_{\pm}(\cA)$, but not on $\cJ_{\pm}(\cA^!)$.

\begin{remark}\label{internal spline}
Like the central zonotopal algebra, 
the internal zonotopal algebra % $\cR_-(\cA)$ 
is also interesting from the perspective of approximation theory.
When $\cA$ is unimodular (Remark \ref{zone}), 
Holtz and Ron used the box spline associated with $\cA$ to define a canonical map from $\cP_-(\cA)$ to the 
space of functions on $\cZ_-(\cA)$, and Lenz proved that this map is an isomorphism
\cite{Lenz}.\footnote{This is very different from the isomorphism
obtained from \cite[Result 2.3 and Theorem 5.10]{Zonotopal}, 
which depends on a choice of positive definite inner product
on $\Lambda$.  In particular, that isomorphism takes $1\in\cP_-(\cA)$ to the constant function, whereas
the more interesting and canonical isomorphism in Lenz's theorem takes $1\in\cP_-(\cA)$ to the restriction of the box spline to $\cZ_-(\cA)$.}
\end{remark}

\subsection{The Orlik--Terao algebra}\label{sec:OT-intro}
The first half of this paper is devoted to providing a canonical counterpart to Theorem \ref{central} in the internal setting.
As in the previous section, choose isomorphisms $V_e\cong \F$ and let $\chi_e\in L^*$ denote the projection from $L$ to $V_e$.
The {\bf Orlik--Terao algebra} of $\cA$, defined in \cite{OT-OT} and first systematically studied in \cite{Terao-OT}, is the subalgebra of rational functions on $L$ generated by $\{\chi_e^{-1}\mid e\in E\}$.\footnote{If $\chi_e=0$ for some $e$, then the Orlik--Terao algebra is by definition identically zero.\label{OT-loop}}
Consider the map from $L$ to $\OT(\cA)$ taking $\a\in L$ to $\sum \chi_e(\a) \chi_e^{-1}$.  Note that this map does not change if we scale
one of the linear functionals $\chi_e$, and is therefore completely canonical.  
Equivalently, we may regard $\OT(\cA)$ as a quotient of $\Sym V$, and the map from $L$ is given by the inclusion of $L$ into $V_\cA$.
The image of this map is a linear system of parameters \cite[Proposition 7]{PS}, meaning that $\OT(\cA)$ is a free
module over $\Sym L$ of finite rank.\footnote{This statement can fail in positive characteristic, which is why it is crucial that we assume that $\operatorname{char} \F = 0$.}  The quotient of $\OT(\cA)$ by the ideal generated by the image of $L$ is called the {\bf reduced Orlik--Terao algebra},
and denoted $\OTbar(\cA)$.

\begin{remark}\label{hypertoric}
If $\F=\Q$ and $M_\cA$ is loopless, the Orlik--Terao algebra $\OT(\cA)$ is canonically isomorphic to the equivariant intersection cohomology
of the hypertoric variety associated with $\cA$ \cite{TP08}, and the Stanley--Reisner algebra of the independence complex is canonically isomorphic
to the cohomology ring of an orbifold symplectic resolution of that hypertoric variety \cite{HS}.  Passing to the reduced Orlik--Terao algebra
or the reduced Stanley--Reisner algebra corresponds to passing from equivariant to ordinary cohomology.
\end{remark}

We will apply this construction to the Gale dual linear space $\cA^!$.  
The algebra $\OT(\cA^!)$ is a quotient of $\Sym V^*$,
and $\OTbar(\cA^!)$ is a quotient of $\Sym (V^*/L^\perp) \cong \Sym L^*$.
Let $\cK(\cA^!)\subset \Sym L^*$ denote the kernel of the homomorphism to $\OTbar(\cA^!)$.
Let $\cE(\cA^!)\subset \Sym L$ denote the Macaulay inverse system of $\cK(\cA^!)$, consisting of differential operators in $\Sym L$
that pair trivially with the polynomials in $\cK(\cA^!)$.
The following analogue of Theorem \ref{central} is our first main result.

\begin{theorem}\label{internal}
We have $\Sym L^* = \cP_-(\cA) \oplus \cK(\cA^!)$.  Equivalently, the compositions
$$\cP_-(\cA) \hookrightarrow \Sym L^* \twoheadrightarrow \OTbar(\cA^!)
\and
\cE(\cA^!)\hookrightarrow \Sym L \twoheadrightarrow \cR_-(\cA)$$
are isomorphisms, and the graded vector spaces $\cP_-(\cA)\cong\OTbar(\cA^!)$ are dual to $\cE(\cA^!)\cong\cR_-(\cA)$.
\end{theorem}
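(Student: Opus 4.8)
The plan is to reduce Theorem~\ref{internal} to a Hilbert series computation together with a single transversality statement, mirroring the structure of the proof of Theorem~\ref{central}. First I would make $\cK(\cA^!)$ explicit: recalling that $\OT(\cA^!)$ is the quotient of $\Sym V^*$ by the ideal generated by the classical Orlik--Terao relations of the circuits of $M_{\cA^!}$, and that $\OTbar(\cA^!)$ is obtained by killing the image of $L^\perp\subset V^*$, one checks that $\cK(\cA^!)$ is the ideal of $\Sym L^*$ generated by the elements $\bar\partial_C:=\sum_{e\in C}\a^C_e\,\chi_{C\setminus e}$, where $C$ runs over the circuits of $M_{\cA^!}$ and $\a^C\in L$ is the vector supported on $C$ (a scalar of the dependence among the coordinate functionals on $L^\perp$). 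Dually $\cE(\cA^!)\subset\Sym L$ is the common kernel of the contraction operators $\bar\partial_C$; geometrically, $\OTbar(\cA^!)$ is the coordinate ring of the scheme-theoretic intersection of $L$ with the reciprocal linear space of $L^\perp$ inside $V$.

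\textbf{Step 1 (Hilbert series).} I would show that $\cP_-(\cA)$ and $\OTbar(\cA^!)$ have the same Hilbert series. Since $\OT(\cA^!)$ is free over $\Sym L^\perp$, its Hilbert series is $(1-t)^{-\dim L^\perp}$ times that of $\OTbar(\cA^!)$; and by the Gr\"obner degeneration of the Orlik--Terao ideal to the Stanley--Reisner ideal of the broken circuit complex (Proudfoot--Speyer), this Hilbert series is also that of the Stanley--Reisner ring of the broken circuit complex of $M_{\cA^!}$. Broken circuit complexes being Cohen--Macaulay, multiplying back by $(1-t)^{\dim L^\perp}$ recovers the Hilbert series of the reduced Stanley--Reisner ring $\Sym L^*/\cJ_-(\cA^!)$, which equals that of $\cP_-(\cA)$ by the Holtz--Ron decomposition $\Sym L^*=\cP_-(\cA)\oplus\cJ_-(\cA^!)$. (Both sides record the internal activity generating polynomial of $M_\cA$.)

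\textbf{Step 2 (transversality), and the main obstacle.} It remains to prove that the composite $\cP_-(\cA)\hookrightarrow\Sym L^*\twoheadrightarrow\OTbar(\cA^!)$ is injective, i.e.\ $\cP_-(\cA)\cap\cK(\cA^!)=0$: by Step 1 it is then an isomorphism, which is exactly the decomposition $\Sym L^*=\cP_-(\cA)\oplus\cK(\cA^!)$, and dualizing gives $\cE(\cA^!)\xrightarrow{\sim}\cR_-(\cA)$ and the stated duality. My plan is induction on $|E|$. For $e\in E$ neither a loop nor a coloop of $M_\cA$ one has $(\cA\setminus e)^!=\cA^!/e$ and $(\cA/e)^!=\cA^!\setminus e$, together with the deletion--contraction exact sequences for zonotopal algebras (Ardila--Postnikov, Holtz--Ron) and, on the reduced Orlik--Terao side, the sequences obtained from the classical deletion--contraction sequence for Orlik--Terao algebras; pushing the inclusion $\cP_-(\cA)\subset\Sym L^*$ through these sequences and chasing the resulting diagram reduces injectivity for $\cA$ to the same statement for $\cA\setminus e$ and $\cA/e$, with Theorem~\ref{central} entering via the shift of flavour in the zonotopal recursion. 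The base case, in which every element of $E$ is a loop or a coloop, is a direct computation, both sides being tensor products indexed by the coloops. The hard part is Step 2, and in particular the construction and compatibility of the deletion--contraction sequence for \emph{reduced} Orlik--Terao algebras: one must check that the various linear systems of parameters are compatible with deletion and contraction, and handle the degenerate case in which deleting $e$ turns another element into a coloop, so that the internal zonotopal algebra of the deletion collapses. A more computational alternative would be to establish $\cP_-(\cA)\cap\cK(\cA^!)=0$ directly from the generators $\bar\partial_C$ and the defining relations $\a^{m(\a)-1}\cdot f=0$, but that appears to demand finer matroid combinatorics.
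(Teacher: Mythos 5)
Your Step~1 (the Hilbert series match via the Gr\"obner degeneration of the Orlik--Terao ideal to the broken-circuit Stanley--Reisner ideal, together with the Holtz--Ron decomposition $\Sym L^* = \cP_-(\cA)\oplus\cJ_-(\cA^!)$) is correct and useful, and your description of the generators $\bar\partial_C$ of $\cK(\cA^!)$ is also right. But the theorem is not a dimension count; its entire content is the transversality $\cP_-(\cA)\cap\cK(\cA^!)=0$, which is your Step~2, and Step~2 is not a proof --- it is a plan with the hard parts flagged but not resolved. You propose a deletion--contraction induction and then, candidly, list the obstacles yourself: the need for a compatible deletion--contraction sequence for \emph{reduced} Orlik--Terao algebras (keeping track of the linear systems of parameters under deletion and contraction), the fact that the zonotopal side shifts flavour (internal does not close under deletion--contraction; the central algebra enters), and the degenerate behaviour when deletion creates coloops. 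None of these is handled. In particular the diagram you want to chase requires commuting squares relating the inclusion $\cP_-(\cA)\hookrightarrow\Sym L^*$ and the quotient $\Sym L^*\twoheadrightarrow\OTbar(\cA^!)$ to the corresponding maps for $\cA\setminus e$ and $\cA/e$, and it is exactly this compatibility that is unverified and nontrivial; until it is established, the induction does not run.

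For comparison, the paper's proof deliberately avoids any induction on $|E|$ and any deletion--contraction. It works over the poset $\cL$ of flats of $M_{\cA^!}$, deforming $\cP_-(\cA)$ to a free $\Sym L^\perp$-module $\cQ_-(\cA)$ and assembling the stalkwise data $\cQ_-(\cA^{E\setminus F})$, $\OT(\cA^!_F)$, $\SR_<(\cA^!_F)$ into sheaves of $\cO$-modules on $\cL$. The result from Braden--Proudfoot that $\OT$ and $\SR_<$ are \emph{minimal extension sheaves}, together with the rigidity of such sheaves (unique up to a unique scalar-normalized isomorphism), is what replaces the transversality argument: one shows $\cQ_-\cong\SR_<$ using the Holtz--Ron theorem stalk by stalk (Proposition~\ref{SR-isom}), concludes that $\cQ_-$ is also a minimal extension sheaf, and then gets $\cQ_-\cong\OT$ for free from uniqueness (Corollary~\ref{MES}); tensoring the global section isomorphism down over $\Sym L^\perp$ gives the theorem. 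The delicate point that your sketch misses is precisely the one the sheaf framework is designed to handle: it is not enough to match each localization $\cP_-(\cA^{E\setminus F})\cong\OTbar(\cA^!_F)$ individually or to verify the Hilbert series, one must propagate the identification compatibly across the whole lattice of flats, and the flabbiness condition encodes exactly the global coherence that an ad hoc deletion--contraction chase would have to reconstruct by hand. The paper even remarks that although the statement looks elementary, its proof requires minimal extension sheaves; your write-up is an honest illustration of why.
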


\begin{remark}
While the statement of Theorem \ref{internal} is elementary, 
our proof requires the theory of minimal extension sheaves on the poset of flats of the arrangement $\cA^!$, developed
in \cite{TP08}.
\end{remark}

\subsection{An application to graphs}\label{graph-application}
Theorem \ref{internal} has a concrete application outside of the world of zonotopal algebras, which we now describe.
Let $\Gamma$ be a connected graph with vertex set $[n]$.
Let $$\Conf(SU(2),\Gamma) := \{\kappa:[n]\to SU(2)\mid \text{$\kappa(i)\neq\kappa(j)$ if $i$ is adjacent to $j$ in $\Gamma$}\},$$
and let $X(SU(2),\Gamma)$ be the quotient of $\Conf(SU(2),\Gamma)$ by the action of $SU(2)$ by simultaneous left translation.

\begin{remark}
This space can be identified with something more familiar in the case of the complete graph $K_n$.
Since $SU(2)\setminus\{\id\}$ is homeomorphic to $\R^3$,
we may use the translation action of $SU(2)$ to move $\kappa(n)$ to the identity and thus identify $X(SU(2),K_n)$ with the configuration space
of $n-1$ distinct labeled points in $\R^3$.  The advantage of writing it 
as $X(SU(2),K_n)$ is that it reveals the fact that we have an action of 
$\mathfrak{S}_n$ and not just $\mathfrak{S}_{n-1}$. The $\mathfrak{S}_{n-1}$ action on the cohomology 
groups of $X(SU(2),K_n)$ gives (up to a sign) the Eulerian 
representations $\mathfrak{S}_{n-1}$, and the $\mathfrak{S}_n$ action 
gives (up to a sign) Whitehouse's lifts of the Eulerian representations 
to $\mathfrak{S}_n$ \cite[Proposition 2]{ER19}.
For arbitrary $\Gamma$, the space $X(SU(2),\Gamma)$ is more mysterious; it may be regarded as a finite approximation to a product of $g$ copies of the $SU(2)$ affine Grassmanian, where
$g$ is the first Betti number of $\Gamma$ \cite[Remark 1.12]{CDBHP}.
\end{remark}

Let $\cA_\Gamma^! := (E, C_1(\Gamma;\mathbb{Q}), H_1(\Gamma;\mathbb{Q}))$, where $E$ is the set of edges and $C_1(\Gamma;\mathbb{Q})$ is the space of cellular 1-chains.
Let $\cA_\Gamma$ be the Gale dual of $\cA_\Gamma^!$.  If $\Gamma$ is simple, then $\cA_\Gamma$ is the linear space associated 
with the corresponding graphical hyperplane arrangement, that is, the subarrangement
of the Coxeter arrangement for $\mathfrak{sl}_n$ indexed by edges of $\Gamma$.
The following statement was conjectured in \cite[Conjecture 2.16]{MPY}.

\begin{corollary}\label{graph}
There exists a degree-halving isomorphism of graded $\Aut(\Gamma)$-representations $$H^*(X(SU(2),\Gamma);\mathbb{Q})\cong \OTbar(\cA_\Gamma).$$
\end{corollary}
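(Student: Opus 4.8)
The plan is to deduce Corollary \ref{graph} from Theorem \ref{internal} applied to $\cA = \cA_\Gamma$, by identifying $H^*(X(SU(2),\Gamma);\Q)$ with the internal zonotopal algebra $\cR_-(\cA_\Gamma)$ in a way that is $\Aut(\Gamma)$-equivariant and halves degrees. Indeed, Theorem \ref{internal} already gives a canonical, hence $\Aut(\cA_\Gamma)$-equivariant, isomorphism $\OTbar(\cA_\Gamma^!{}^!) = \OTbar(\cA_\Gamma) \cong \cR_-(\cA_\Gamma)$ of graded vector spaces — wait, more precisely the dual pairing $\cP_-(\cA_\Gamma)\cong\OTbar(\cA_\Gamma^!)$; so I must be careful about which arrangement plays which role. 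Here $\cA_\Gamma$ is defined as the Gale dual of $\cA_\Gamma^! = (E, C_1(\Gamma), H_1(\Gamma))$, so $(\cA_\Gamma)^! = \cA_\Gamma^!$, and Theorem \ref{internal} with $\cA = \cA_\Gamma$ yields $\cP_-(\cA_\Gamma) \cong \OTbar(\cA_\Gamma^!)$ and $\cE(\cA_\Gamma^!)\cong\cR_-(\cA_\Gamma)$ as dual pairs of $\Aut(\Gamma)$-representations. The target $\OTbar(\cA_\Gamma)$ in the corollary, however, is the reduced Orlik--Terao algebra of $\cA_\Gamma$ itself, \emph{not} of its Gale dual — so the first real task is to reconcile this with the graphical conventions of \cite{MPY}: namely, the graphical arrangement attached to $\Gamma$ in the usual sense (subarrangement of the $\mathfrak{sl}_n$ Coxeter arrangement) is $\cA_\Gamma$, whose Orlik--Terao algebra is the classical object, and the corollary is asserting $H^*(X(SU(2),\Gamma))\cong\OTbar(\cA_\Gamma)$. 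Thus I want $\OTbar(\cA_\Gamma)\cong H^*(X(SU(2),\Gamma))$, and Theorem \ref{internal} relates $\OTbar(\cA_\Gamma)$ to $\cR_-$ of its Gale dual $\cA_\Gamma^!$; so the chain is $H^*(X(SU(2),\Gamma)) \cong \cR_-(\cA_\Gamma^!) \cong \OTbar((\cA_\Gamma^!)^!) = \OTbar(\cA_\Gamma)$, using Theorem \ref{internal} with the linear space $\cA_\Gamma^!$ in the role of ``$\cA$''.

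Granting that bookkeeping, the heart of the argument is the topological input: an $\Aut(\Gamma)$-equivariant, degree-doubling isomorphism $H^*(X(SU(2),\Gamma);\Q)\cong\cR_-(\cA_\Gamma^!)$. This is exactly the kind of statement proved in \cite{CDBHP} (``Internal zonotopal algebras also have applications to graphical configuration spaces''), and I expect to cite or reprove it: the cohomology of $X(SU(2),\Gamma)$ should be computed, e.g., via a Leray--Hirsch or Orlik--Solomon-type spectral sequence, or via the orbit-harmonics presentation, recovering the internal zonotopal algebra $\cR_-(\cA_\Gamma^!)$ with its natural $\mathfrak{S}$-action. The factor-of-two in degrees comes from $SU(2)\simeq S^3$ contributing cohomology in even total degree after the quotient, matching the fact that $\cR_-$ is concentrated in nonnegative internal degrees while $H^*$ sits in degrees that are multiples of $2$. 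Equivariance under $\Aut(\Gamma)$ is automatic because the construction $\Gamma\mapsto X(SU(2),\Gamma)$ and the construction $\Gamma\mapsto\cA_\Gamma^!$ are both functorial in graph automorphisms, and $\Aut(\Gamma)$ maps to $\Aut(\cA_\Gamma^!) = \Aut(\cA_\Gamma)$.

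Putting the pieces together: (1) recall/establish $H^*(X(SU(2),\Gamma);\Q)\cong\cR_-(\cA_\Gamma^!)$ as graded $\Aut(\Gamma)$-modules with a degree-halving grading shift, citing the graphical-configuration-space results underlying \cite{CDBHP,MPY}; (2) apply Theorem \ref{internal} to the linear space $\cA := \cA_\Gamma^!$, whose Gale dual is $(\cA_\Gamma^!)^! = \cA_\Gamma$, to get a canonical isomorphism $\cR_-(\cA_\Gamma^!)\cong\OTbar(\cA_\Gamma)$ of graded vector spaces; (3) observe that this isomorphism is $\Aut(\cA_\Gamma^!)$-equivariant because every map in its construction (the inclusion $\cE(\cA_\Gamma)\hookrightarrow\Sym L$, the surjection onto $\cR_-$, and the duality with $\OTbar$) is canonical, and $\Aut(\Gamma)\to\Aut(\cA_\Gamma^!)$; (4) compose to obtain the desired $H^*(X(SU(2),\Gamma);\Q)\cong\OTbar(\cA_\Gamma)$. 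The main obstacle I anticipate is step (1): the purely topological identification of the configuration-space cohomology with the internal zonotopal algebra is the substantive content and is where the real work (or the real citation) lies; by contrast, the algebraic half — that $\OTbar(\cA_\Gamma)$ is the correct combinatorial avatar — is handed to us by Theorem \ref{internal}, and the equivariance is a formality once one checks that no choices intervene. A secondary subtlety worth spelling out is verifying that the $\Aut(\Gamma)$-action on both sides is the \emph{same} (not twisted by a sign or by the outer action on $\mathfrak{S}_n$), which one settles by comparing the two actions on the degree-one pieces, where $\cR_-(\cA_\Gamma^!)_1$ and $H^2(X(SU(2),\Gamma);\Q)$ are both identified with an explicit $\Aut(\Gamma)$-stable subspace of $\Q^E$ cut out by the cycle space $H_1(\Gamma;\Q)$.
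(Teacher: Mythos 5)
Your overall approach matches the paper's: cite \cite[Theorem 1.9]{CDBHP} for the identification $H^*(X(SU(2),\Gamma);\Q)\cong\cR_-(\cA_\Gamma^!)$ (with degree $2i$ going to degree $-i$), then apply Theorem~\ref{internal} with $\cA = \cA_\Gamma^!$, and compose. But there is a genuine gap in how you invoke Theorem~\ref{internal}, and it propagates into the final conclusion.

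In step (2) you assert that Theorem~\ref{internal} gives a ``canonical isomorphism $\cR_-(\cA_\Gamma^!)\cong\OTbar(\cA_\Gamma)$ of graded vector spaces.'' It does not. Applied to $\cA = \cA_\Gamma^!$, the theorem yields a canonical isomorphism $\cP_-(\cA_\Gamma^!)\cong\OTbar(\cA_\Gamma)$, together with the statement that $\cP_-(\cA_\Gamma^!)$ is \emph{dual} to $\cR_-(\cA_\Gamma^!)$. In other words, Theorem~\ref{internal} furnishes a canonical, $\Aut(\cA_\Gamma)$-equivariant perfect pairing between $\cR_-(\cA_\Gamma^!)$ and $\OTbar(\cA_\Gamma)$, not an isomorphism between them. (You actually caught this early on when you wrote ``more precisely the dual pairing,'' but then dropped the distinction in your final synthesis.) Combined with the topological input, what you actually have at this stage is that $H^{2i}(X(SU(2),\Gamma);\Q)$ is \emph{dual} to $\OTbar^i(\cA_\Gamma)$ as $\Aut(\Gamma)$-representations.

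The missing ingredient is the observation that finite-dimensional representations of a finite group over $\Q$ are (noncanonically) self-dual, which lets you convert the duality into an isomorphism. This is precisely the final step in the paper's proof, and it is not a cosmetic omission: without it you have only proved the dual-pairing statement, not the isomorphism claimed in the corollary. Note also that this step is what absolves you of your ``secondary subtlety'' about the two $\Aut(\Gamma)$-actions matching; because the self-duality step is already noncanonical, there is no need to pin down a preferred isomorphism by comparing degree-one pieces. One consequence worth internalizing: the resulting isomorphism $H^*(X(SU(2),\Gamma);\Q)\cong\OTbar(\cA_\Gamma)$ is \emph{not} canonical, even though every other arrow in your chain is, and the argument breaks down over fields like $\C$ where self-duality of representations can fail (as the paper remarks, with a pointer to \cite{Berget-complex-reflection}).
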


\begin{proof}
By \cite[Theorem 1.9]{CDBHP}, there exists an $\Aut(\cA_\Gamma)$-equivariant vector space isomorphism 
$H^*(X(SU(2),\Gamma);\mathbb{Q})\cong \cR_-(\cA_\Gamma^!)$
taking classes of degree $2i$ to classes of degree $-i$.
Combining this with Theorem \ref{internal}, we see that $H^{2i}(X(SU(2),\Gamma);\mathbb{Q})$ is dual to $\OTbar^i(\cA_\Gamma)$ 
as a representation of $\Aut(\Gamma)\subset\Aut(\cA_\Gamma)$.
The corollary then follows from the fact that finite dimensional
representations of finite groups over $\Q$ are (noncanonically) self-dual.
\end{proof}

\begin{remark}
The case of the complete graph is already very interesting, and was the primary motivation for the conjecture in \cite{MPY}.
For this special case, Corollary \ref{graph} was proved by Pagaria \cite{Pagaria} by studying the generating functions
for the symmetric functions obtained by applying the Frobenius characteristic map to the graded pieces of the representations.
This is a beautiful proof, but it does not provide any canonical 
isomorphism or duality, nor does it extend to other graphs. 
\end{remark}

\begin{remark}
There are many examples of linear spaces over $\C$
for which $\cR_-(\cA)$ and $\OTbar(\cA^!)$ are dual but not isomorphic as graded representations of a finite group of symmetries.
For example, see \cite{Berget-complex-reflection} for an equivariant study of $\cR_-(\cA)$
when $\cA^!$ is the linear space associated with the reflection hyperplanes for the complex reflection group $G(m,1,n)$.
Theorem \ref{internal} allows us to translate this into a study of the reduced Orlik--Terao algebra of $\cA^!$.
\end{remark}

\subsection{The Schubert variety}
Assume now that the ground field $\F$ is algebraically closed.
For all $e\in E$, let $$\overline{V_e} := V_e \sqcup \{\infty\} = \P(V_e \oplus \F) \cong \P^1.$$
The {\bf Schubert variety} $Y_\cA$ is defined as the closure of $L$ inside of
$\PA := \prod \overline {V_e}$.
The name Schubert variety indicates an analogy with Schubert varieties in Lie theory; the most important shared feature is that both varieties admit affine pavings.
The translation action of $L$ on itself extends to an action on $Y_\cA$, which has finitely many orbits 
indexed by flats of $M_\cA$, each of which is isomorphic to an affine space (Lemma \ref{strat}).

Let $F$ be a corank 1 flat.  The corresponding orbit in $Y_\cA$ has codimension 1, and we denote its closure by $D_F$.
Let $\a_F\in L$ span the line in $L$ corresponding to $F$, which is the stabilizer of the orbit, and let
$m(F) := m(\a_F)$.  
%  = \text{the number of hyperplanes in $\cA$ not containing $F$.}$$
For any integer $k$, consider the Weyl divisor $$D_k := \sum_F (m(F) + k) D_F$$
along with the associated sheaf $\cO_{Y_\cA}(D_k)$.\footnote{We are 
using the fact that $Y_{\cA}$ is normal (Theorem~\ref{rational}) to make 
sense of $\cO_{Y_\cA}(D_k)$.}  Concretely, a section of $\cO_{Y_\cA}(D_k)$ is a rational function on $Y_\cA$ that is regular on $L$ and has a pole of order at most $m(F)+k$
on $D_F$.  In particular, we have a canonical inclusion $H^0(\cO_{Y_\cA}(D_k))\subset H^0(\cO_L) \cong \Sym L^*$.  
Our second main result identifies this space of sections with the Macaulay
inverse system $C_{\cA,k}$.  Recall that $R_{\cA,k}$ and $C_{\cA,k}$ are only defined when $k\geq -(\rho+1)$, where $\rho$ is the smallest possible value of $m(\a)$.

\begin{theorem}\label{Schubert}
For all $-(\rho + 1) \leq k \leq 0$, we have $H^0(\cO_{Y_\cA}(D_k)) = C_{\cA,k}$ 
as subspaces of $\Sym L^*$.\footnote{
When $k$ is positive, $H^0(\cO_{Y_\cA}(D_k))$ is equal to the slightly larger space $C_{\!\cA,k}'$ appearing in \cite{ArPo-correction}.}
\end{theorem}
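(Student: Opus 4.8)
The plan is to show both inclusions $H^0(\cO_{Y_\cA}(D_k)) \subseteq C_{\cA,k}$ and $C_{\cA,k} \subseteq H^0(\cO_{Y_\cA}(D_k))$ by analyzing the order of vanishing of a polynomial $f \in \Sym L^*$ along each boundary divisor $D_F$. The key geometric input is the orbit stratification of $Y_\cA$ by flats of $M_\cA$ (Lemma \ref{strat}), together with the explicit description of $D_F$ as the closure of the orbit corresponding to a corank-$1$ flat $F$, whose stabilizer is the line $\F\a_F \subset L$. The first step is to understand, for a polynomial $f$ regarded as a rational function on $Y_\cA$, how its pole order along $D_F$ is computed. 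Since $D_F$ is the closure of an affine orbit and the normal direction to $D_F$ inside $Y_\cA$ at a generic point is governed by the coordinates $e \in E$ with $\a_F \not\in F$ (equivalently $\chi_e(\a_F) \neq 0$), I expect that a local coordinate vanishing on $D_F$ is essentially $\chi_e^{-1}$ for any such $e$, and that the pole order of $f$ along $D_F$ equals $\deg f - (\text{something involving the filtration of } f \text{ by powers of } \a_F)$. Concretely, I would like to establish that the pole order of $f$ along $D_F$ is at most $N$ if and only if $\a_F^{\,d} \cdot f = 0$ for $d > \deg f - ?$; but the cleaner route is: the order of pole of $f$ along $D_F$ is at most $m(F)+k$ precisely when $\a_F^{\,m(F)+k+1}\cdot f = 0$, interpreting $\a_F \in L = \Sym^1 L$ as a differential operator.

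The second step is to make the previous sentence precise using the local structure at a generic point $p$ of $D_F$. Near such a point, $Y_\cA$ looks like a product of $\A^1$'s (the tangent directions to the orbit through $p$) with a one-dimensional transverse slice; on that slice, a uniformizer $t$ vanishing on $D_F$ can be taken so that $\chi_e = t^{-1} u_e$ for $e$ with $\chi_e(\a_F) \neq 0$ (with $u_e$ a unit), and $\chi_e$ regular for $e \in F$. Then for a monomial $\chi_S$ with $S \subseteq E$, the pole order along $D_F$ is $|S \setminus F| = |\{e \in S : \chi_e(\a_F)\neq 0\}|$. Expanding an arbitrary $f \in \Sym L^*$ in the $\chi_e$'s and tracking leading terms, the pole order of $f$ along $D_F$ is the largest $|S \setminus F|$ over monomials appearing in $f$ — but one must be careful that cancellations among the leading parts do not occur, which is where the genericity of $L$ inside $V$ (i.e.\ that the $\chi_e|_L$ satisfy only the matroid relations) gets used. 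The condition ``pole order $\leq m(F)+k$ along every $D_F$'' then becomes ``for every corank-$1$ flat $F$, every monomial of $f$ meets $E \setminus F$ in at most $m(F)+k$ coordinates,'' and I would then translate this into the differential-operator condition $\a_F^{\,m(F)+k+1} \cdot f = 0$, using that $\a_F$ acts on $\chi_S$ by differentiating and that $m(F)$ counts exactly the coordinates $e$ with $\chi_e(\a_F)\neq 0$.

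The third step is to pass from corank-$1$ flats to arbitrary $0 \neq \a \in L$. The divisor condition only references corank-$1$ flats, whereas $C_{\cA,k}$ is cut out by $\a^{m(\a)+k+1}\cdot f = 0$ for all nonzero $\a$. For $\a$ spanning the line of a corank-$1$ flat these agree. For a general $\a$, the line $\F\a$ corresponds to a flat $F_\a$ of possibly higher corank, and I would argue that the relation $\a^{m(\a)+k+1}\cdot f = 0$ for such $\a$ is implied by the relations coming from the corank-$1$ flats containing $F_\a$ (together with lower-degree relations), at least in the range $-(\rho+1) \leq k \leq 0$; this kind of reduction is standard in the zonotopal-algebra literature (it is implicit in the Ardila--Postnikov presentation and in \cite{Zonotopal}), and I would either cite it or reprove it by a short argument restricting to the subspace $L \cap \bigcap_{e \in F_\a} H_e$. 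The restriction $k \leq 0$ enters because for positive $k$ the naive divisorial space is genuinely larger than $C_{\cA,k}$ — as the footnote indicates, one gets Ardila--Postnikov's corrected space $C'_{\cA,k}$ instead — so the argument in step two showing no extra sections appear must fail for $k > 0$, and indeed it should: for $k>0$ the pole bound $m(F)+k$ can exceed what the polynomial relations impose on all of $L$ simultaneously.

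The main obstacle I anticipate is step two: rigorously computing the pole order of an arbitrary polynomial $f$ along $D_F$, in particular ruling out cancellation of leading terms. This requires a careful local-coordinate description of $Y_\cA$ near a generic point of $D_F$ — which presumably comes out of the proof of Lemma \ref{strat} and the normality statement (Theorem \ref{rational}) — and a clean dictionary between ``vanishing to order $\geq j$ along $D_F$'' and the action of $\a_F$ as a differential operator. Once that dictionary is in hand, the equivalence with the defining relations of $C_{\cA,k}$ is essentially bookkeeping, and the passage to general $\a$ (step three) is a known reduction.
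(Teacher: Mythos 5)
Your overall plan matches the paper's: exploit the local product structure of $Y_\cA$ near $D_F$ (the paper's Lemma \ref{neighborhood} gives $U_F \cong L_F \times \overline{L^F}$ with $\overline{L^F}\cong\P^1$, $L^F := L\cap V_{E\setminus F}$ one-dimensional when $F$ has corank one), compute pole orders on the one-dimensional transverse slice, translate into a degree bound along lines in the direction $\a_F$, and reduce from arbitrary $\a$ to the cocircuit case by citing Ardila--Postnikov. The paper does precisely this, invoking \cite[Proposition 2.6]{ArPo} and \cite[Lemma 1]{ArPo-correction} for the final reduction, so your step three is sound.

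The genuine gap is in your step two. You propose to read off the pole order of $f$ along $D_F$ by expanding $f$ in monomials $\chi_S$ and taking the largest $|S\setminus F|$ that appears, flagging cancellation as a worry. But this is not a worry that can be argued away by ``genericity of $L$'': the restrictions $\chi_e|_L$ satisfy nontrivial linear relations whenever $L\subsetneq V$, so the expansion of $f\in\Sym L^*$ in monomials $\chi_S$ is not even unique, and the quantity ``largest $|S\setminus F|$ over monomials of $f$'' is ill-defined; for any particular choice of expansion cancellation of leading parts really does occur. The correct and much cleaner statement, which the paper uses, is intrinsic: the product decomposition of $U_F$ shows that $\operatorname{ord}_{D_F} f$ equals the degree of $f$ restricted to a generic affine translate $s(\a)+L^F$ of the line $L^F$ — a polynomial in one variable, where there is nothing to cancel. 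That degree is at most $m(F)+k$ if and only if $\a_F^{m(F)+k+1}\cdot f=0$, which is the differential-operator condition you wanted. Once you replace the monomial-expansion heuristic with this restriction-to-a-generic-line characterization, the remaining steps of your proposal close as in the paper.
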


\subsection{Syzygies}\label{sec:syzygies-intro}
Our primary motivation for Theorem \ref{Schubert} is that these cohomology groups appear naturally in the minimal resolution of the structure sheaf of $Y_\cA$ on the ambient variety $\PA$.
For any $S\subset E$, let $V_S := \bigoplus_{e\in S} V_e$, let $\pi_S:V\to V_S$ denote the projection, and let $L_S := \pi_S(L)\subset V_S$.  We define the {\bf localization}
$\cA_S:= (S, V_S, L_S)$ and the {\bf contraction} $\cA^S := (E\setminus S, V_{E\setminus S}, L \cap V_{E\setminus S})$, so that $(\cA^!)_S = (\cA^{E\setminus S})^!$.  At the level of matroids, these operations correspond to restriction to $S$
and contraction of $S$, respectively.  Let $\pi_e:\PA\to\overline{V}_e$
denote the coordinate projection, and let $$\cO_{\PA}(-1_S) := \bigotimes_{e\in S}\pi_e^*\cO_{\overline{V}_e}(-1).$$
Let $M = M_{\cA^!}$ be the matroid associated with the Gale dual of $\cA$.  
For any vector space $V$, let $\det(V) := \wedge^{\dim V}V$.

\begin{theorem}\label{resolution}
There exists an $\Aut(\cA)$-equivariant minimal free resolution $$\cdots\to \cQ_2\to\cQ_1\to\cQ_0\to\cO_{Y_\cA}\to 0$$
of the structure sheaf $\cO_{Y_\cA}$ on $\P_\cA$, and we have
\begin{equation*}\label{syzygies}\cQ_i \;\;\;\;\;\cong \bigoplus_{\operatorname{crk}_M(E\setminus S) = i}
\det(L_S^\perp)\otimes H^0\big(\cO_{Y_{\cA_S}}(D_{-2})\big)^*\otimes \cO_{\PA}(-1_S).\footnote{The
action of $\Aut(\cA)$ on the right-hand side of this expression will be made precise in Remark \ref{linearization}.}\end{equation*}
\end{theorem}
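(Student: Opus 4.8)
The plan is to construct $\cQ_\bullet$ by hand as a $\Z^E$-graded minimal free resolution over the Cox ring $S := \F[x_e,y_e\mid e\in E]$ of $\PA=\prod_e\P(V_e\oplus\F)$, to check it is a complex, to prove it is acyclic, and then to read off minimality and equivariance. The starting input is that $Y_\cA$ is the zero locus of the \emph{circuit equations}: for each circuit $C$ of $M_\cA$ the scale-unique, fully supported relation $\sum_{e\in C}c_e\chi_e=0$ on $L$ clears denominators over its support to a multilinear form $f_C\in S$ of multidegree $1_C$, and these $f_C$ generate the homogeneous ideal $I$ of $Y_\cA$; this is due to Ardila--Boocher \cite{ArBoo}, and can also be extracted from the affine paving of Lemma~\ref{strat} together with the normality of $Y_\cA$ (Theorem~\ref{rational}). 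Applying Theorem~\ref{Schubert} to each localization $\cA_S$, together with Theorem~\ref{internal}, the coefficient $H^0(\cO_{Y_{\cA_S}}(D_{-2}))^{*}$ is canonically $\cP_-(\cA_S)^{*}\cong\cR_-(\cA_S)$, so the target $i$-th term is $\bigoplus_S\det(L_S^\perp)\otimes\cR_-(\cA_S)\otimes\cO_{\PA}(-1_S)$, summed over $S\subseteq E$ with $\crk_M(E\setminus S)=|S|-\rk_{M_\cA}(S)=i$. Directly from the definition, $\cR_-(\cA_S)=0$ unless $M_\cA|_S$ is coloop-free (a coloop of $M_\cA|_S$ provides a support-$1$ element of $L_S$, forcing a unit into the defining ideal), so in practice the sum runs over coloop-free $S$ of nullity $i$; in particular $\cQ_0=\cO_{\PA}\twoheadrightarrow\cO_{Y_\cA}$ is restriction, and $\cQ_1=\bigoplus_C\det(L_C^\perp)\otimes\cR_-(\cA_C)\otimes\cO_{\PA}(-1_C)$, over circuits $C$, mapping to $\cQ_0$ canonically by $\bigoplus_Cf_C$.

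The higher differentials I would assemble from three families of natural maps. For $S'\subsetneq S$ with $\operatorname{nullity}(S')=\operatorname{nullity}(S)-1$ one has $L_{S'}^\perp\subseteq L_S^\perp$ inside $V_S^*$, of codimension one; the $(S,S')$-block of $d_i$ is the tensor product of (i) multiplication $\cO_{\PA}(-1_S)\to\cO_{\PA}(-1_{S'})$ by the multilinear form of multidegree $1_{S\setminus S'}$ obtained by clearing denominators in a relation of $L_S^\perp$ whose class spans $L_S^\perp/L_{S'}^\perp$ and whose support is exactly $S\setminus S'$ (the block being zero unless such a relation exists), (ii) the contraction $\wedge^{i}L_S^\perp\to\wedge^{i-1}L_{S'}^\perp\otimes(L_S^\perp/L_{S'}^\perp)$ dual to the inclusion, and (iii) the surjection $\cR_-(\cA_S)\twoheadrightarrow\cR_-(\cA_{S'})$ induced by $L_S\twoheadrightarrow L_{S'}$, which descends because restricting supports only lowers the exponents $m(\a)-1$. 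Then $d^2=0$ follows by a direct check: for a flag $S\supsetneq S'\supsetneq S''$ the two contributions cancel in pairs, the signs being those of the exterior algebra and the forms from (i) composing compatibly. I would try to streamline this by exhibiting $\cQ_\bullet$ as a subquotient of an auxiliary Koszul-type complex, which would make $d^2=0$ formal.

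The crux is acyclicity, and I would attack it with the Buchsbaum--Eisenbud criterion. Since $Y_\cA$ is Cohen--Macaulay of codimension $\crk(M_{\cA^!})$ (by Theorem~\ref{rational}, Lemma~\ref{strat}, and \cite{ArBoo}), the Cox-ring complex is acyclic provided the rank identities $\rk(d_i)+\rk(d_{i+1})=\rk\cQ_i$ hold (equivalently, the homology is torsion) and the ideal of maximal minors of $d_i$ has codimension $\ge i$. The rank identities I would prove by localizing at the generic point of $\PA$ and running a deletion--contraction recursion in $(E,\cA)$ in parallel with the standard recursion for $\dim\cR_-$; this also cross-checks against the Betti numbers of $Y_\cA$ in \cite{ArBoo}. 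The depth estimate is the genuinely delicate point: restricted to a torus-orbit stratum of $\PA$, the forms appearing in (i) cut out a configuration governed by the matroid minor on the coordinates sent to $\infty$, and one must bound the resulting vanishing loci, using that each $Y_{\cA_S}$ is itself Cohen--Macaulay. As a more robust alternative I would instead do a deletion--contraction induction via a mapping cone: for $e\in E$ there is a short exact sequence relating $\cO_{Y_\cA}$ to $\cO_{Y_{\cA\setminus e}}$ and $\cO_{Y_{\cA/e}}$ on $\PA=\P_{\cA\setminus e}\times\overline{V_e}$, and feeding in the resolutions of the two smaller linear spaces and minimalizing the cone should recover $\cQ_\bullet$, the bookkeeping being organized by the fundamental circuit of $e$; either way, exactness at $\cQ_0$ is precisely the input that the $f_C$ generate $I$. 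I expect this codimension/depth input — or, in the inductive approach, the minimalization step — to be the main obstacle.

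Finally, minimality is automatic: every entry of $d_i$ has strictly positive multidegree $1_{S\setminus S'}$, hence lies in the graded maximal ideal of $S$, and the a priori vanishing of the non--coloop-free summands precludes further cancellation, so $\cQ_\bullet$ is the minimal $\Z^E$-graded free resolution. Equivariance is built in because no auxiliary choices were made: $\Aut(\cA)$ acts on $\PA$ permuting the factors $\overline{V_e}$, hence permutes the summands of $\cQ_i$ indexed by $S$ within an orbit, acts on $\det(L_S^\perp)$ through the sign of the induced permutation of a basis, and acts on $\cR_-(\cA_S)$ functorially; this is exactly the linearization recorded in Remark~\ref{linearization}, and all of the maps (i)--(iii) are manifestly equivariant.
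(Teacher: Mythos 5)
Your proposal takes a genuinely different and substantially harder route than the paper, and it has gaps that you yourself flag.

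The paper does not construct differentials at all. It proves abstract existence of an $\Aut(\cA)$-equivariant minimal free resolution by an averaging argument (Lemma~\ref{module res}, Corollary~\ref{emfr}), then imports the multigraded Betti numbers of $\cO_{Y_\cA}$ from \cite[Theorem 1.5]{ArBoo} as a black box, which pins down the shape $\cQ_i \cong \bigoplus_{\crk_M(E\setminus S)=i} Q_S \otimes \cO_{\PA}(-1_S)$ for unknown $\Aut(\cA)_S$-representations $Q_S$. The only new content is then identifying $Q_S$ as a representation, and this is done cohomologically: twisting the resolution by $\cO_{\PA}(-1_S)$ kills the cohomology of every term except the one indexed by $S$ (by the K\"unneth vanishing for $\cO_{\P^1}(-1)$), so the hypercohomology spectral sequence isolates $Q_S \otimes H^{|S|}(\P_\cA, \cO(-2_S)) \cong H^{\rk_{M_\cA}(S)}(Y_\cA,\cO(-1_S))$; rational singularities (Theorem~\ref{rational}) reduce this to cohomology on $Y_{\cA_S}$ (Lemma~\ref{resolve in stages}); and Serre duality together with Proposition~\ref{canonical} converts it into $\det(L_S^\perp)\otimes H^0(\cO_{Y_{\cA_S}}(D_{-2}))^*$. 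You are instead attempting to reconstruct the entire resolution — generators, differentials, and exactness — from scratch. That means you are re-deriving (and strengthening) Ardila--Boocher rather than leveraging it, which is a much heavier lift for no additional payoff.

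Within your own approach, the genuine gaps are exactly where you anticipate them. First, $d^2=0$ is asserted via a ``direct check'' and a hoped-for auxiliary Koszul complex that is never exhibited; the sign and compatibility bookkeeping for maps (i)--(iii) across flags $S\supsetneq S'\supsetneq S''$ is nontrivial and not carried out. Second, acyclicity is the real crux: the Buchsbaum--Eisenbud route requires codimension bounds on ideals of minors that you do not establish, and the deletion--contraction mapping-cone alternative presupposes a short exact sequence relating $\cO_{Y_\cA}$, $\cO_{Y_{\cA\setminus e}}$, and $\cO_{Y_{\cA/e}}$ together with control of the minimalization step, none of which is supplied. Your identifications of the coefficient spaces via Theorems~\ref{Schubert} and~\ref{internal}, the vanishing of non-coloop-free summands, and the minimality and equivariance observations are all correct, but they sit on top of an unproved acyclic complex. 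In short, the proposal is not wrong in spirit, but it attempts a constructive argument the paper deliberately avoids, and the hardest steps — well-definedness of $d$ and exactness — remain open.
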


\begin{remark}
By Theorem \ref{Schubert}, the factor $H^0\big(\cO_{Y_{\cA_S}}(D_{-2})\big)^*$ in Theorem \ref{resolution} may be replaced by the internal zonotopal
algebra $\cR_-(\cA_S)$.  By Theorem \ref{internal}, it may be replaced by $\OTbar((\cA_S)^!)^*$.
\end{remark}

\begin{remark}\label{flats}
Theorem \ref{resolution} is a categorification of \cite[Theorem 1.5]{ArBoo}, in which the multigraded Betti numbers of $\cO_{Y_\cA}$ are computed.
The novelty of Theorem \ref{resolution} is that it encodes the action of the group $\Aut(\cA)$ on the multiplicity spaces, including the multiplicative
group $\Gm\subset\Aut(\cA)$, which is responsible for the gradings.

Though we have written the resolution in Theorem \ref{resolution} as a direct sum over all subsets $S\subset E$, many of the terms
vanish.  Indeed, $\cR_-(\cA_S)$ is nonzero if and only if $M_{\cA_S}$ has no coloops, which is the case if and only
if $E\setminus S$ is a flat of $M$. 
\end{remark}

We conclude by considering what happens when we tensor the exact sequence in Theorem \ref{resolution} with $\cO(1_E) := \cO(-1_E)^*$ and take cohomology.
All of the sheaves appearing in this new exact sequence have vanishing higher cohomology and $\cO_{Y_\cA}\otimes\cO(1_E) \cong \cO_{Y_\cA}(D_0)$, which leads to the following result
relating internal and external zonotopal algebras.

\begin{corollary}\label{cohomology}
There exists an $\Aut(\cA)$-equivariant exact sequence of vector spaces
$$\cdots\to H^0(\cQ_2(1_E))\to H^0(\cQ_1(1_E))\to H^0(\cQ_0(1_E))\to \cR_+(\cA)^*\to 0,$$
where 
$$H^0(\cQ_i(1_E)) \;\;\;\;\;\; 
\cong \bigoplus_{\operatorname{crk}_{M}(E\setminus S) = i}
\det(L_S^\perp)\otimes \cR_-(\cA_S) \otimes \bigotimes_{e\in E\setminus S}(V_e^* \oplus\F).$$
\end{corollary}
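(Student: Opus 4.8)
The plan is to tensor the $\Aut(\cA)$-equivariant resolution of Theorem~\ref{resolution} by the invertible sheaf $\cO(1_E)=\cO(-1_E)^*$ on $\PA$ and then take cohomology. Tensoring by a line bundle preserves exactness, and since $\cO(1_E)=\bigotimes_{e\in E}\pi_e^*\cO_{\overline V_e}(1)$ is formed over the whole index set $E$ it carries a canonical $\Aut(\cA)$-linearization, unlike the individual factors $\cO_{\PA}(-1_S)$ (compare Remark~\ref{linearization}). Thus we obtain an $\Aut(\cA)$-equivariant exact complex $\cdots\to\cQ_1(1_E)\to\cQ_0(1_E)\to\cO_{Y_\cA}\otimes\cO(1_E)\to0$ on $\PA$, which is finite because the index $i=\crk_M(E\setminus S)$ never exceeds $\rk M$.

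First I would compute the cohomology of the twisted syzygy sheaves. We have $\cO_{\PA}(-1_S)\otimes\cO(1_E)\cong\bigotimes_{e\in E\setminus S}\pi_e^*\cO_{\overline V_e}(1)$, a line bundle of degree one on the $\P^1$-factors indexed by $E\setminus S$ and degree zero on the rest. Since a line bundle of degree $\ge0$ on $\P^1$ has no positive cohomology, the Künneth formula on $\PA=\prod_{e\in E}\overline V_e$ gives $H^{>0}(\PA,\cQ_i(1_E))=0$ and $H^0(\PA,\cQ_i(1_E))\cong\bigoplus_{\crk_M(E\setminus S)=i}\det(L_S^\perp)\otimes H^0\!\big(\cO_{Y_{\cA_S}}(D_{-2})\big)^*\otimes\bigotimes_{e\in E\setminus S}(V_e^*\oplus\F)$, using $H^0(\overline V_e,\cO_{\overline V_e}(1))\cong V_e^*\oplus\F$ and $H^0(\overline V_e,\cO_{\overline V_e})\cong\F$. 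Applying Theorem~\ref{Schubert} to $\cA_S$ with $k=-2$ (valid since $\rho(\cA_S)\ge1$, so that $-(\rho(\cA_S)+1)\le-2\le0$) identifies $H^0\!\big(\cO_{Y_{\cA_S}}(D_{-2})\big)$ with $C_{\cA_S,-2}=\cP_-(\cA_S)$, whose linear dual is $\cR_-(\cA_S)$; this yields the asserted description of $H^0(\cQ_i(1_E))$.

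Next I would treat the cokernel $\cO_{Y_\cA}\otimes\cO(1_E)$. The geometric input is the identity $\cO_{Y_\cA}\otimes\cO(1_E)\cong\cO_{Y_\cA}(D_0)$: since $Y_\cA$ is normal (Theorem~\ref{rational}) and, along each corank-one flat $F$, the divisor $\pi_e^{-1}(\infty_e)\cap Y_\cA$ contains $D_F$ with multiplicity one precisely when $e\notin F$, summing over $e\in E$ exhibits the class of $\cO(1_E)|_{Y_\cA}$ as $\sum_F\#\{e:e\notin F\}\,D_F=\sum_F m(F)\,D_F=D_0$; here $\#\{e:e\notin F\}=m(F)$ because $\a_F$ spans $L\cap V_{E\setminus F}$. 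Then the case $k=0$ of Theorem~\ref{Schubert} gives $H^0\big(\cO_{Y_\cA}(D_0)\big)=C_{\cA,0}=\cP_+(\cA)$, the linear dual of $R_{\cA,0}=\cR_+(\cA)$, so $H^0\big(\PA,\cO_{Y_\cA}\otimes\cO(1_E)\big)\cong\cR_+(\cA)^*$. I expect this divisor identification to be the main obstacle: it requires pinning down the multiplicities with which the $D_F$ occur inside the pullbacks $\pi_e^{-1}(\infty_e)$, and so draws on the orbit stratification and normality of $Y_\cA$; the rest of the argument is formal.

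Finally I would finish by dévissage. Write the complex above as $0\to\cQ_n(1_E)\to\cdots\to\cQ_0(1_E)\to\cG\to0$ with $\cG:=\cO_{Y_\cA}\otimes\cO(1_E)$, and break it into short exact sequences $0\to K_i\to\cQ_i(1_E)\to K_{i-1}\to0$ with $K_{-1}:=\cG$ and $K_n=0$. Since $H^{>0}(\cQ_i(1_E))=0$ for every $i$, the associated long exact sequences give $H^j(K_{i-1})\cong H^{j+1}(K_i)$ for all $j\ge1$, and iterating down to $K_n=0$ shows $H^j(K_i)=0$ for all $j\ge1$ and all $i\ge-1$ (in particular $H^{>0}(\cG)=0$). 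Because $H^1(K_i)=0$, each sequence $0\to H^0(K_i)\to H^0(\cQ_i(1_E))\to H^0(K_{i-1})\to0$ is exact, and splicing these yields the desired exact sequence $\cdots\to H^0(\cQ_1(1_E))\to H^0(\cQ_0(1_E))\to\cR_+(\cA)^*\to0$. All the operations used---twisting by $\cO(1_E)$, forming kernels of maps of sheaves, taking cohomology---are functorial for $\Aut(\cA)$, so this sequence is $\Aut(\cA)$-equivariant, which completes the proof.
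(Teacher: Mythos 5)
Your proposal is correct and follows the same overall strategy as the paper: tensor the resolution of Theorem~\ref{resolution} by $\cO(1_E)$, establish vanishing of higher cohomology, and identify $H^0(\cO_{Y_\cA}(1_E))$ with $\cR_+(\cA)^*$ via Theorem~\ref{Schubert}. The one place you diverge, in a way that actually improves on the paper's brevity, is the vanishing step: the paper cites Brion's theorem to assert that every sheaf in the twisted resolution, including $\cO_{Y_\cA}(1_E)$, has no higher cohomology, and then invokes the hypercohomology spectral sequence. You instead observe that each $\cQ_i(1_E)$ is a direct sum of line bundles $\bigotimes_{e\in E\setminus S}\pi_e^*\cO_{\overline V_e}(1)$ of nonnegative multidegree on the product of $\P^1$'s, so their higher cohomology vanishes by K\"unneth alone, and then deduce $H^{>0}(\cO_{Y_\cA}(1_E))=0$ by d\'evissage down the finite resolution; you also spell out the divisor computation $\cO_{Y_\cA}(1_E)\cong\cO_{Y_\cA}(D_0)$ (using $|E\setminus F|=m(F)$ for corank-one flats), which the paper handles via the definitions of $D_0(e)$ and $D_0(S)$ in Section~4.4. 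Your route buys a more self-contained argument that does not require Brion for the vanishing; the paper's route is shorter. Both are valid, and the identification of $H^0(\cQ_i(1_E))$ via $H^0(\overline V_e,\cO(1))\cong V_e^*\oplus\F$ and Theorem~\ref{Schubert} applied to $\cA_S$ with $k=-2$ matches the paper's intent.
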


\begin{remark}\label{spanning}
We have noted in Remark \ref{flats} that $\cR_-(\cA_S)$ vanishes unless $E\setminus S$ is a flat of $M$.
If it is a flat, then the total dimension of $\cR_-(\cA_S)$ is equal to $(-1)^{\operatorname{crk}(E\setminus S)} \mu(E\setminus S,E)$, where $\mu$ is the M\"obius function
on the lattice of flats of $M$ \cite[Theorem 5.9(1)]{Zonotopal}.  The total dimension of $\cR_+(\cA)$ is equal to the number of spanning sets of $M$ \cite[Theorem 4.10(1)]{Zonotopal}.  
Thus the exact sequence
in Corollary \ref{cohomology} categorifies the equation
$$ \text{number of spanning sets of $M$}\;\;\; = \sum_{\text{flats $F$}} \mu(F,E) \,2^{|F|}.$$
This identity is obtained via M\"obius inversion from the more familiar identity
$$2^{|E|}\;\;\; = \sum_{\text{flats $F$}} |\{T\subset E\mid \overline{T} = F\}|.$$
\end{remark}

\vspace{\baselineskip}
\noindent
{\em Acknowledgments:}
We are grateful to Andy Berget and Matt Larson for valuable 
conversations. We would also like to thank the creators of Macaulay2, as 
well as Federico Galleto for writing the BettiCharacters package, which 
was helpful in proving Theorem~\ref{resolution}.
The first author thanks the Simons 
Foundation for support.  The second author thanks All Souls College for 
its hospitality during the preparation of this manuscript.
Both authors thank the National Science Foundation.

\section{Modules over \boldmath{$\Sym L^\perp$}}\label{sec:deformations}
The algebra $\OTbar(\cA^!)$ comes with a flat deformation $\OT(\cA^!)$ over the base $(L^\perp)^*\cong V_\cA/L$.
Our strategy for proving Theorem \ref{internal} will be to define a corresponding flat deformation $\cQ_-(\cA)$ of $\cP_-(\cA)$ over the same base,
and to prove that $\cQ_-(\cA)$ is isomorphic to $\OT(\cA^!)$.

\subsection{OT and \boldmath{$\SR_<$}}\label{sec:OTSR}
Fix for each $e\in E$ a nonzero vector $v_e\in V_e$, and let $w_e\in V_e^*$ be the unique vector that pairs to 1 with $v_e$.  
Let $\chi_e\in L^*$ be the linear functional obtained by projecting onto $V_e$ and pairing with $w_e$,
and let $\psi_e\in (L^\perp)^*$ be the linear functional obtained by projecting onto $V_e^*$ and pairing with $v_e$.
We have surjections $$\Sym V\to \OT(\cA)\and \Sym V^*\to \OT(\cA^!)$$ taking $v_e$ to $\chi_e^{-1}$ and $w_e$ to $\psi_e^{-1}$.
We will only be interested in $\OT(\cA^!)$, but we provide the two constructions in parallel to make them more transparent.

\begin{remark}
Note that scaling $v_e$ has the effect of scaling $\psi_e$ and inverse scaling $w_e$ and $\chi_e$, so these surjections are in fact independent of any choices. 
This observation makes it clear why $\OT(\cA)$ is canonically a quotient of $\Sym V$ rather than $\Sym V^*$.
\end{remark}

Let $\cG(\cA^!)\subset \Sym V^*$ be the kernel of the homomorphism to $\OT(\cA^!)$.
For any element $\a \in L$, let
$\Supp(\a) := \{e\mid \chi_e(\a) \neq 0\}$, so that $m(\a) = |\Supp(\a)|$.
Since $\a$ vanishes on $L^\perp$, so does the rational function
$$\a\prod_{e\in \Supp(\a)} \psi_e^{-1}\;\;\; = \sum_{e\in \Supp(\a)} \chi_e(\a) \prod_{f\in\Supp(\a)\setminus e} \psi_f^{-1}.$$
Thus we have a corresponding element $$h_\a \;\;\;:= \sum_{e\in \Supp(\a)} \chi_e(\a) \prod_{f\in\Supp(\a)\setminus e} w_f\;\;\;\;\in\;\;\;\;\cG(\cA^!).$$
If there exists some $\a\in L$ with $\Supp(\a) = \{e\}$, then $h_\a$ is a nonzero constant, and therefore $\cG(\cA^!) = \Sym V^*$. 
This happens if and only if $\psi_e=0$, and therefore agrees with the 
convention in Footnote \ref{OT-loop} that $\OT(\cA^!)=0$ in such cases.

Fix a total ordering $<$ of $E$, along with a monomial order on $\Sym V^*$ with the property that
$w_e < w_f$ if and only if $e < f$.  For any $0\neq g\in \Sym V^*$, we define the initial term $\operatorname{in}_<(g)\in \Sym V^*$
to be the smallest monomial appearing in $g$ with respect to our monomial order.
Let $$\cG_<(\cA^!) := 
\left\langle  \operatorname{in}_<(h_\a) \bigmid \a\in L\right\rangle\subset \Sym V^*.$$
This ideal is called the Stanley--Reisner ideal of the broken circuit complex of $M_{\cA^!}$, and the quotient
$$\SR_<(\cA^!) := \Sym V^* / \cG_<(\cA^!)$$ is called the Stanley--Reisner algebra of the broken circuit complex. 

We call $0\neq \a\in L$ {\bf minimal} if its support does not strictly contain the support of another nonzero element of $L$, or equivalently if the support of $\a$
is a circuit of $M_{\cA^!}$.  It is clear that $\cG_<(\cA^!)$ is generated by $\operatorname{in}_<(h_\a)$ for minimal $\a$.
Since $h_\a\in\cG(\cA^!)$, we {\em a priori} have an inclusion $\cG_<(\cA^!) \subset \operatorname{in}(\cG(\cA^!))$.
In fact, this inclusion turns out to be an isomorphism; the following result is proved using the theory of geometric vertex decompositions \cite{KMY}.

\begin{theorem}\label{OT-degeneration}{\em \cite[Theorem 4]{PS}}
The set $\{h_\a\mid \text{{\em $\a$ minimal}}\}$ is a universal Gr\"obner basis for $\cG(\cA^!)$.  That is, for any choice of ordering $<$
and any compatible choice of monomial order,
we have $\operatorname{in}(\cG(\cA^!)) = \cG_<(\cA^!)$.
\end{theorem}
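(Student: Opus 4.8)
The approach I would take has two layers: a soft reduction to a single Hilbert-series identity, and a deletion--contraction recursion --- in the language of geometric vertex decompositions \cite{KMY} --- that establishes it together with the Gr\"obner property directly. For the reduction: since every $h_\a$ lies in $\cG(\cA^!)$, the inclusion $\cG_<(\cA^!)\subseteq\operatorname{in}_<(\cG(\cA^!))$ holds for free; the graded ring $\Sym V^*/\operatorname{in}_<(\cG(\cA^!))$ has the same Hilbert series as $\Sym V^*/\cG(\cA^!)=\OT(\cA^!)$; and an inclusion of homogeneous ideals inducing equal Hilbert series on the quotients is an equality. Moreover $\operatorname{in}_<(h_\a)$ is the monomial associated with the broken circuit $\Supp(\a)\setminus\{\max_{<}\Supp(\a)\}$, which depends on the total order $<$ but not on the refining monomial order, so the $\operatorname{in}_<(h_\a)$ generate $\cG_<(\cA^!)$ for every compatible monomial order. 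Thus the theorem is equivalent to the numerical statement that the Stanley--Reisner ring $\SR_<(\cA^!)$ of the broken circuit complex of $M_{\cA^!}$ has the same Hilbert series as $\OT(\cA^!)$; and since the former is visibly independent of $<$ --- it equals $\pi(M_{\cA^!};\tfrac{t}{1-t})$, where $\pi$ is the Poincar\'e polynomial, by Whitney's broken-circuit theorem --- one identity settles all monomial orders simultaneously, which is the universality clause.

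It therefore suffices to prove $\operatorname{Hilb}(\OT(\cA^!))=\pi(M_{\cA^!};\tfrac{t}{1-t})$. One can simply quote this from the literature on the Orlik--Terao algebra (e.g.\ \cite{Terao-OT}) and be done, but the self-contained route --- the one that makes \cite{KMY} natural --- is induction on $|E|$. First reduce to $M_{\cA^!}$ loopless (a loop $e$ produces $h_\a$ a nonzero constant when $\Supp(\a)=\{e\}$, so $\cG(\cA^!)=\Sym V^*$ and $\OT(\cA^!)=0$) and to $L\neq 0$ (else $\cG(\cA^!)=0$). Let $e$ be the $<$-minimal element; if $e$ is a coloop then $\cG(\cA^!)$ already avoids $w_e$ and one recurses directly, so assume $e$ lies in a circuit and perform the geometric vertex decomposition of $\cG(\cA^!)$ that peels off $w_e$. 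Because $w_e$ occurs to degree at most one in each $h_\a$, the generators split into those avoiding $e$ --- the circuit relations of the deletion $M_{\cA^!}\setminus e$ --- and those through $e$, whose $w_e$-leading coefficients are the circuit relations of the contraction $M_{\cA^!}/e$. The crux is the claim that, under the evident identification of coordinate rings, the two ideals output by the vertex decomposition are $\cG$ of these two smaller linear spaces (the link ideal being the unit ideal exactly when $e$ has a parallel, so that the contraction gains a loop), mirroring the decomposition of $\mathrm{BC}(M_{\cA^!})$ into $\mathrm{BC}(M_{\cA^!}\setminus e)$ and the cone on $\mathrm{BC}(M_{\cA^!}/e)$ at its minimal vertex. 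Granting it, the inductive hypothesis handles both pieces, and the lifting lemma for geometrically vertex decomposable ideals \cite{KMY} promotes their Gr\"obner bases to $\operatorname{in}_<(\cG(\cA^!))=\cG_<(\cA^!)$; since at each stage one takes the minimal element of the current restriction of $<$, the recursion runs for every total order, hence every monomial order.

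The main obstacle is precisely the crux claim --- matching the ideals produced by the $w_e$-degeneration of the Orlik--Terao ideal with the Orlik--Terao ideals of $M_{\cA^!}\setminus e$ and $M_{\cA^!}/e$, and verifying that the degeneration is genuinely geometric, i.e.\ flat with reduced, unmixed special fibre, so that the recursion is legitimate. The contraction side needs the minor-circuit combinatorics (circuits of $M_{\cA^!}/e$ are the minimal members of $\{C\setminus e : e\in C\}\cup\{C : e\notin C\}$ as $C$ runs over circuits of $M_{\cA^!}$) together with a check that the $w_e$-leading coefficients of the through-$e$ relations suffer no unexpected cancellation; primeness of $\cG(\cA^!)$ --- it is the kernel of the surjection onto the domain $\OT(\cA^!)$ --- and inductive control of the initial ideals of the two pieces are what keep the special fibre reduced and of the expected dimension. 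The rest --- the Hilbert-series bookkeeping, the base cases, the passage to ``universal'' --- is routine; the full argument appears in \cite[Theorem 4]{PS}.
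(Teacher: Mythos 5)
The paper does not prove this theorem; it is quoted from \cite[Theorem~4]{PS}, with the preceding sentence attributing the cited proof to the theory of geometric vertex decompositions \cite{KMY}. Your ``Layer~2'' --- the deletion--contraction recursion that peels off the $<$-minimal variable $w_e$, matches the two pieces of the geometric vertex decomposition with $\cG$ of the deletion and of the contraction, and lifts Gr\"obner bases via \cite{KMY} --- is exactly the strategy of that cited reference; you explicitly defer its crux to \cite{PS}, so as written it is a correct roadmap rather than a proof. Your ``Layer~1'' is a genuinely different and shorter route, and it is sound: the inclusion $\cG_<(\cA^!)\subseteq\operatorname{in}_<(\cG(\cA^!))$ is automatic, $\operatorname{in}_<(h_\a)$ is the broken-circuit monomial of $\Supp(\a)$ and so depends only on the total order $<$ rather than the refining monomial order, and the Macaulay-style argument (an inclusion of homogeneous ideals with Hilbert-equivalent quotients is an equality) reduces everything to the single identity $\operatorname{Hilb}(\OT(\cA^!)) = \operatorname{Hilb}(\SR_<(\cA^!)) = \pi\bigl(M_{\cA^!};\tfrac{t}{1-t}\bigr)$, where the first equality is Terao's theorem \cite{Terao-OT} and the second is the nbc face count of the broken circuit complex. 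This trades self-containment for brevity --- it uses Terao's theorem as a black box (which is proved independently of any Gr\"obner degeneration, so there is no circularity), whereas the GVD route of \cite{PS} is self-contained and also delivers the flatness, reducedness, and Cohen--Macaulayness of the degeneration, which the Hilbert-series argument alone does not.
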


The inclusion of $L^\perp$ into $V^*$ makes both $\OT(\cA^!)$ and $\SR_<(\cA^!)$ graded modules over $\Sym L^\perp$, and they are both free
modules by \cite[Propositions 1 and 7]{PS}.
We define the {\bf reduced Orlik--Terao algebra} and {\bf reduced Stanley--Reisner algebra}
$$\OTbar(\cA^!) := \OT(\cA^!) \otimes_{\Sym L^\perp} \F \and \SRbar_<(\cA^!) =: \SR_<(\cA^!) \otimes_{\Sym \cA^!} \F.$$
We may equivalently regard $\OTbar(\cA^!)$ and $\SRbar_<(\cA^!)$ as quotients of $\Sym L^* \cong \Sym (V_\cA^*/L^\perp)$ by the images of $\cG(\cA^!)$ and $\cG_<(\cA^!)$ in $\Sym L^*$.
These images are precisely the ideals $\cK(\cA^!)$ and $\cJ_-(\cA^!)$ from Sections \ref{sec:OT-intro} and \ref{sec:background}.

\subsection{\boldmath{$\cQ_-$}}\label{P-intro-section}
Recall that we have defined
$$\cP_-(\cA) := \left\{f\in \Sym L^*\bigmid  \text{$\a^{m(\a)-1}\cdot f = 0$ for all $0\neq \a\in L$}\right\}.$$
Ardila and Postnikov prove that it is enough to consider minimal $\a$ \cite[Lemma 1]{ArPo-correction}, which may be regarded
as an analogue of the statement that $\cG(\cA^!)$ is generated by the classes $h_\a$ for minimal $\a$.
Our deformation $\cQ_-(\cA)$ of $\cP_-(\cA)$ is defined in the most naive possible way:
$$\cQ_-(\cA) := \left\{f\in \Sym V^*\bigmid \text{$\a^{m(\a)-1}\cdot f = 0$ for all $0\neq \a\in L$}\right\}\subset \Sym V^*.$$
The inclusion of $L^\perp$ into $V^*$ makes $\cQ_-(\cA)$ a graded module over $\Sym L^\perp$. 

\begin{lemma}\label{free}
The space $\cQ_-(\cA)$ is a free module over $\Sym L^\perp$, and we have a canonical isomorphism of graded vector spaces
$$\cP_-(\cA)\cong \cQ_-(\cA) \otimes_{\Sym L^\perp} \F.$$
\end{lemma}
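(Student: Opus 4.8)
The plan is to exhibit an explicit $\Sym L^\perp$-module structure on $\cQ_-(\cA)$ that is compatible with a direct sum decomposition, and to deduce freeness together with the base-change identification simultaneously. First I would reinterpret $\cQ_-(\cA)$ as a deformation parametrized by $(L^\perp)^*$: for each point $\xi\in (L^\perp)^*$, consider the translated subspace $L+\tilde\xi\subset V$ (where $\tilde\xi$ is any lift along the splitting $V\cong L\oplus (\text{complement})$ determined by the chosen $v_e$'s, or more invariantly the fiber of $V\to (L^\perp)^*$ over $\xi$), and observe that $m(\a)$ is constant along $L$ and the defining conditions $\a^{m(\a)-1}\cdot f=0$ for $\a\in L$ are translation-equivariant. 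Concretely, the key point is that $\Sym V^*$ is free over $\Sym L^\perp$ — a polynomial ring over a polynomial subring, via a choice of complementary variables — and $\cQ_-(\cA)$ is cut out inside it by a $\Sym L^\perp$-submodule of relations that is itself ``graded over the base,'' so that the fiber $\cQ_-(\cA)\otimes_{\Sym L^\perp}\F$ recovers exactly the solutions of $\a^{m(\a)-1}\cdot f=0$ inside $\Sym L^*=\Sym V^*/(L^\perp)$.

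The cleanest route I would take: choose a graded vector space complement $U$ to $L^\perp$ inside $V^*$ (e.g.\ the span of the $w_e$ restricted appropriately, or any $\Gm$-stable complement), so $\Sym V^*\cong \Sym L^\perp\otimes \Sym U$ and $\Sym U\cong \Sym L^*$ canonically as graded rings. Now I would argue that $\cQ_-(\cA)$ is a \emph{graded} $\Sym L^\perp$-submodule of $\Sym L^\perp\otimes \Sym U$ and that the natural map $\Sym L^\perp\otimes \cP_-(\cA)\to \cQ_-(\cA)$, obtained by viewing $\cP_-(\cA)\subset \Sym U$ and multiplying by $\Sym L^\perp$, is an isomorphism. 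Injectivity is immediate since $\Sym L^\perp\otimes\Sym U$ is free over $\Sym L^\perp$ with basis a homogeneous basis of $\Sym U$ containing a basis of $\cP_-(\cA)$. For surjectivity — which is the crux — one takes $f\in \cQ_-(\cA)$ homogeneous, writes $f=\sum_j p_j\otimes u_j$ with $p_j\in\Sym L^\perp$ and $u_j\in\Sym U$ a fixed basis, and must show each $u_j$ appearing lies in $\cP_-(\cA)$; this follows because the differential operator $\a\in L$ acts \emph{trivially on the $\Sym L^\perp$ factor} (as $\a$ pairs to zero with $L^\perp$, hence $\a\cdot(p\otimes u)=p\otimes(\a\cdot u)$), so the equation $\a^{m(\a)-1}\cdot f=0$ becomes $\sum_j p_j\otimes(\a^{m(\a)-1}\cdot u_j)=0$, forcing $\a^{m(\a)-1}\cdot u_j=0$ for every $j$ by $\Sym L^\perp$-linear independence. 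Hence every component $u_j\in\cP_-(\cA)$ and $f\in \Sym L^\perp\otimes\cP_-(\cA)$.

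With the isomorphism $\cQ_-(\cA)\cong \Sym L^\perp\otimes_\F \cP_-(\cA)$ established, freeness over $\Sym L^\perp$ is automatic (it is visibly free on any homogeneous $\F$-basis of $\cP_-(\cA)$), and tensoring with $\F$ over $\Sym L^\perp$ gives the canonical isomorphism $\cQ_-(\cA)\otimes_{\Sym L^\perp}\F\cong \cP_-(\cA)$, which is precisely the reduction $f\mapsto f \bmod (L^\perp)$ onto $\Sym V^*/(L^\perp)\cong\Sym L^*$, manifestly $\Aut(\cA)$- (in particular $\Gm$-) equivariant and degree-preserving. The main obstacle is really just making the observation ``$\a\in L$ acts as a differential operator killing $\Sym L^\perp$'' airtight: one must be careful that the pairing conventions (Remark \ref{convention}) and the decomposition $\Sym V^*=\Sym L^\perp\otimes\Sym U$ are compatible with the contraction action of $\Sym V$ on $\Sym V^*$ — specifically that $L\subset V$ contracts a polynomial by differentiating only in the $U$-directions and annihilating $L^\perp$-directions. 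Once that compatibility is recorded, the rest is formal linear algebra over the graded polynomial ring $\Sym L^\perp$.
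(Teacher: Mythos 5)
Your overall approach matches the paper's: fix a splitting $V^* \cong L^\perp \oplus U$, so $\Sym V^* \cong \Sym L^\perp \otimes \Sym U$ with $\Sym U \cong \Sym L^*$, and observe that each $\a\in L$ acts as $\operatorname{id}\otimes\partial_\a|_{\Sym U}$ because $\a$ annihilates $L^\perp$. That observation is correct, and it is exactly what makes the paper's one-line assertion (``any splitting induces an isomorphism $\cQ_-(\cA)\cong \cP_-(\cA)\otimes\Sym L^\perp$'') work.

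However, the way you execute the surjectivity step contains a genuine gap. You expand $f=\sum_j p_j\otimes u_j$ over a fixed $\F$-basis $\{u_j\}$ of $\Sym U$, with coefficients $p_j\in\Sym L^\perp$, and claim that $\sum_j p_j\otimes(\a^{m(\a)-1}u_j)=0$ forces $\a^{m(\a)-1}u_j=0$ for every $j$ appearing, ``by $\Sym L^\perp$-linear independence.'' That inference fails: the terms $\a^{m(\a)-1}u_j$ are not themselves basis elements, so after rewriting them in the $\{u_k\}$ basis the relation only says that certain $\F$-linear combinations of the coefficients $p_j$ vanish, and the $p_j$ need not be linearly independent over $\F$. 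Concretely, if $u_1+u_2\in\cP_-(\cA)$ but neither $u_1$ nor $u_2$ is (which can happen once one chooses the basis $\{u_j\}$ without reference to $\cP_-(\cA)$), then $f=q\otimes(u_1+u_2)$ has $p_1=p_2=q$, the relation $\sum_j p_j\otimes(\a^{m(\a)-1}u_j)=q\otimes(\a^{m(\a)-1}(u_1+u_2))=0$ is satisfied, yet $\a^{m(\a)-1}u_1\neq 0$. Your intermediate claim is therefore false, even though the final conclusion $f\in\Sym L^\perp\otimes\cP_-(\cA)$ is still true.

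The fix is to expand with the roles of the two tensor factors reversed: write $f=\sum_i q_i\otimes g_i$ where $\{q_i\}$ is an $\F$-basis of $\Sym L^\perp$ and $g_i\in\Sym U$. Then $\a^{m(\a)-1}f=\sum_i q_i\otimes(\a^{m(\a)-1}g_i)=0$, and now $\F$-linear independence of the $q_i$ genuinely forces $\a^{m(\a)-1}g_i=0$ for every $i$ and every $\a$, i.e. $g_i\in\cP_-(\cA)$, giving $f\in\Sym L^\perp\otimes\cP_-(\cA)$. Equivalently, one can avoid bases altogether: for any linear map $T$ on $\Sym U$ one has $\ker(\operatorname{id}\otimes T)=\Sym L^\perp\otimes\ker T$, and this identity commutes with intersecting over all $\a$, so $\cQ_-(\cA)=\bigcap_\a\ker\big(\operatorname{id}\otimes\a^{m(\a)-1}\big)=\Sym L^\perp\otimes\bigcap_\a\ker\a^{m(\a)-1}=\Sym L^\perp\otimes\cP_-(\cA)$. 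With this correction, your argument is complete and coincides with the paper's.
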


\begin{proof}
Any splitting of the inclusion of $L^\perp$ into $V^*$ induces a graded $\Sym L^\perp$-module isomorphism $\cQ_-(\cA) \cong \cP_-(\cA)\otimes \Sym L^\perp$.
While this isomorphism depends on the choice of splitting, the induced isomorphism
$\cP_-(\cA)\cong \cQ_-(\cA) \otimes_{\Sym L^\perp} \F$ does not.
\end{proof}

\section{Sheaves on the poset of flats}\label{sec:sheaves}
Let $\cL$ be the poset of flats of the matroid $M_{\cA^!}$.  
We will assume throughout this section that $M_{\cA^!}$ is loopless, so the minimal element of $\cL$ is $\emptyset$ and the maximal element is $E$.
Consider the topology on the set $\cL$ in which $U\subset\cL$ is open if and only if it is downward-closed: that is, $F\in U$
whenever $F\leq G\in U$.  The minimal open neighborhood of $G$ is the set $U_G := \{F\mid F\leq G\}$.
We can also think of $\cL$ as the set of objects of a thin category in which $|\Hom(G,F)| = 1$ if $F\leq G$ and 0 otherwise.
Given a sheaf $\cF$ on the topological space $\cL$, we can produce a functor on the category $\cL$
by sending a flat $G$ to the stalk $\cF_G = \cF(U_G)$.  This process is reversible, so sheaves are in canonical bijection
with functors \cite[Proposition 1.1]{TP08}. 

For any flat $F$, recall the that we have defined the localization $(\cA^!)_F = (F, V^*_F, (L^\perp)_F)$ in Section \ref{sec:syzygies-intro}.
For ease of notation, we will write $\cA^!_F$ for $(\cA^!)_F$ and $L^\perp_F$ for $(L^\perp)_F = (L \cap V_F)^\perp$, but we stress that the Gale duality comes first and the localization second.
We define a sheaf $\cO$ on by putting
$$\cO_F := \Sym L^\perp_F$$
and taking $\cO_G\to\cO_F$ to be the canonical projection for any $F\leq G$.  In particular, we have $\cO_E = \Sym L^\perp$ and $\cO_\emptyset = \F$.
We call $\cO$ the {\bf structure sheaf} of $\cL$, and we will be interested in sheaves of graded $\cO$-modules.

\subsection{Minimal extension sheaves}
Our main technical tool in this paper will be the theory of minimal extension sheaves on $\cL$, developed in \cite{TP08}.

\begin{proposition}\label{mes}{\em \cite[Proposition 1.10 and Corollary 3.7]{TP08}}
There exists a sheaf $\cF$ of graded $\cO$-modules on $\cL$ with the following properties:
\begin{itemize}
\item For every $F\in\cL$, $\cF_F$ is a free module over $\cO_F$.
\item The sheaf $\cF$ is flabby:  for every open $U\subset \cL$, the restriction $\cF(\cL)\to \cF(U)$ is surjective.
\item We have $\cF_{\emptyset} \cong \cO_{\emptyset} \cong \F$.
\end{itemize}
The sheaf $\cF$ is unique up to isomorphism.  Furthermore, the only endomorphisms of $\cF$ are those given by scalar multiplication.
\end{proposition}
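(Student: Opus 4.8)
The plan is to construct $\cF$ by induction on the rank of flats, taking a minimal graded free cover at each step, and then to obtain the uniqueness and endomorphism assertions by parallel inductions.

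For existence, I would set $\cF_\emptyset := \F$ and process the flats in an order refining $\le$. When a flat $F$ is reached, $\cF$ has already been built on the open subposet $\cL_{<F} := \{G\mid G<F\} = U_F\setminus\{F\}$, so one has the module of boundary sections $\partial\cF_F := \cF(\cL_{<F}) = \varprojlim_{G<F}\cF_G$. Since $\cL_{<F}$ is finite, each $\cF_G$ is finitely generated over $\cO_G$, and each $\cO_G$ is a cyclic $\cO_F$-module via the projection $\cO_F\twoheadrightarrow\cO_G$, the $\cO_F$-module $\partial\cF_F$ is finitely generated. Let $q_F\colon\cF_F\to\partial\cF_F$ be a minimal graded free cover — concretely $\cF_F := \cO_F\otimes_\F(\partial\cF_F\otimes_{\cO_F}\F)$, with $q_F$ the $\cO_F$-linear extension of a homogeneous lift of an $\F$-basis of $\partial\cF_F\otimes_{\cO_F}\F$ — which exists and is unique up to (non-canonical) isomorphism by graded Nakayama; $\cF_F$ is $\cO_F$-free by construction, and $q_F$ is surjective. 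Declare the restriction $\cF_F\to\cF_G$ to be $q_F$ followed by the projection $\partial\cF_F\to\cF_G$. Then $\cF_\emptyset\cong\cO_\emptyset$ and every $\cF_F$ is $\cO_F$-free, while flabbiness follows from the surjectivity of the $q_F$ by the standard reduction: if $U$ is open and $F$ is minimal in $\cL\setminus U$, then $U\cup\{F\}$ is open and $\cF(U\cup\{F\})\to\cF(U)$ is onto because a compatible family over $U$ restricts to an element of $\partial\cF_F$ and hence lifts along $q_F$; peeling off $\cL\setminus U$ one flat at a time gives $\cF(\cL)\twoheadrightarrow\cF(U)$. Along the way I would also record the pointwise-purity degree bound that the construction forces — $\cF_\emptyset$ sits in degree $0$, and for $F\ne\emptyset$ the module $\cF_F$ is generated in degrees below $\rk(F)$ — together with its key consequence that $\ker q_F$ vanishes in every degree below $\rk(F)$, hence in every degree occupied by a generator of $\cF_F$.

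For uniqueness and the endomorphism statement I would argue by the same induction. Note first that freeness and flabbiness alone do not pin $\cF$ down, since one may add to any $\cF_F$ a free summand mapping to $0$ below $F$; so \emph{minimal extension sheaf} must be read as also requiring each $q_F$ to be a minimal cover (equivalently, as imposing the purity bound), and I would include this as a hypothesis. Given a second such sheaf $\cF'$, build an isomorphism $\varphi\colon\cF\xrightarrow{\ \sim\ }\cF'$ by setting $\varphi_\emptyset=\id_\F$ and, having defined $\varphi$ compatibly over $\cL_{<F}$, using the resulting $\cO_F$-isomorphism $\partial\cF_F\xrightarrow{\ \sim\ }\partial\cF'_F$: since $q_F$ and $q'_F$ are minimal free covers it lifts (Nakayama again) to an isomorphism $\cF_F\xrightarrow{\ \sim\ }\cF'_F$ commuting with the covers, extending $\varphi$ over $F$. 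For the endomorphism statement, let $\psi$ be a graded endomorphism of $\cF$; on $\cF_\emptyset=\F$ it is multiplication by a scalar $c$, and I claim $\eta := \psi - c\cdot\id$ vanishes. If $\eta$ vanishes on $\cL_{<F}$ then the endomorphism it induces on $\partial\cF_F$ is $0$, so naturality of $\eta$ with respect to restriction gives $q_F\circ\eta_F = 0$, i.e.\ the image of $\eta_F$ lies in $\ker q_F$; but $\ker q_F$ is zero in every degree carrying a generator of $\cF_F$, so $\eta_F$ kills a generating set of the $\cO_F$-module $\cF_F$ and therefore vanishes.

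The step I expect to be the real work is the grading bookkeeping: proving the pointwise-purity bound during the inductive construction, and deducing that $\ker q_F$ is concentrated in degrees $\ge\rk(F)$ — this is exactly what rigidifies the uniqueness lift and forces the endomorphism statement. It is the content of \cite[Proposition 1.10 and Corollary 3.7]{TP08}, which of course one may simply invoke.
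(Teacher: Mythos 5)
The paper gives no proof here — it simply cites \cite[Proposition 1.10 and Corollary 3.7]{TP08} — so there is no argument of the paper's to compare against; your outline is essentially a reconstruction of the cited proof and is correct. You also rightly observe that the three bulleted conditions as restated in this proposition do not by themselves determine $\cF$: adding to some $\cF_F$ with $F\neq\emptyset$ a free $\cO_F$-summand that restricts to zero on $U_F\setminus\{F\}$ preserves freeness, flabbiness, and the normalization at $\emptyset$, yet changes the isomorphism type. The definition in \cite{TP08} does include the minimality condition that each stalk-to-boundary-sections map be a minimal free cover; the paper's paraphrase omits it, and you are right to build it back in.

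The one small overstatement is your framing of the degree bounds — $\cF_F$ generated in degrees below $\rk(F)$, kernel of the cover concentrated in degrees at least $\rk(F)$ — as ``forced by the construction.'' The stalk (generator-degree) bound does fall out of the induction, but the costalk bound on the kernel, which is exactly what your endomorphism argument relies on, is genuinely additional input: in the combinatorial intersection cohomology literature it is deduced from a self-duality statement for the minimal extension sheaf rather than from the Nakayama construction itself, and a minimal free cover alone only guarantees the kernel lies one degree above the lowest generator, not above all generators. You defer to \cite{TP08} for this at the end, which is the right move; just be clear that it is a theorem being invoked rather than bookkeeping.
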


\begin{remark}
In \cite{TP08}, it is only asserted that all automorphisms of $\cF$ are given by scalar multiplication, rather than all endomorphisms.
However, the proof of the stronger statement is identical; see \cite[Lemma 1.16]{TP08} and \cite[Remark 1.18]{BBFK}.
\end{remark}

Any sheaf $\cF$ satisfying the conditions of Proposition \ref{mes} is called a {\bf minimal extension sheaf}.

\begin{corollary}\label{canonical}
Suppose that $\cF$ and $\cF'$ are minimal extension sheaves.
Any sheaf homomorphism $\varphi:\cF\to\cF'$
with the property that $\varphi_\emptyset:\cF_\emptyset\to \cF'_\emptyset$ is nonzero is necessarily an isomorphism.
\end{corollary}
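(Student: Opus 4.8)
The plan is to build the isomorphism $\varphi$ from the scalar identity on the stalk at $\emptyset$, exploiting the two defining features of a minimal extension sheaf --- freeness of stalks over the $\cO_F$ and flabbiness --- to propagate an isomorphism up the poset $\cL$ one rank at a time. First I would normalize: after rescaling we may assume $\varphi_\emptyset$ is the identity on $\cF_\emptyset = \cF'_\emptyset = \F$. The goal is then to show $\varphi_F:\cF_F\to\cF'_F$ is an isomorphism for every flat $F$, by induction on the rank (equivalently, the height in $\cL$) of $F$. The base case is the hypothesis.

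For the inductive step, fix a flat $F$ of rank $r$ and assume $\varphi_G$ is an isomorphism for all flats $G<F$. Restricting the sheaf homomorphism to the open set $U_F\setminus\{F\}$ (the punctured minimal neighborhood, which is downward-closed and hence open), the inductive hypothesis says $\varphi$ restricts to an isomorphism of sheaves there; in particular it induces an isomorphism on sections over $U_F\setminus\{F\}$, i.e. on the ``boundary values'' $\cF(U_F\setminus\{F\})\to\cF'(U_F\setminus\{F\})$. By flabbiness the restriction maps $\cF_F = \cF(U_F)\to\cF(U_F\setminus\{F\})$ and likewise for $\cF'$ are surjective. The heart of the argument is the characterizing property of the minimal extension stalk: $\cF_F$ is the (up to the degree shift built into the minimality axiom) minimal free $\cO_F$-module surjecting onto $\cF(U_F\setminus\{F\})$ --- concretely, the kernel of $\cF_F\to\cF(U_F\setminus\{F\})$ sits in the ideal generated by positive-degree elements of $\cO_F$, and $\cF_F$ is obtained by choosing homogeneous lifts of a minimal generating set of the boundary module. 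Since $\varphi$ commutes with the restriction maps and induces an isomorphism on the boundary, $\varphi_F$ is a map of free $\cO_F$-modules lifting an isomorphism of their ``$\cO_F$-free hulls''; by Nakayama (the graded local version, valid because $\cO_F = \Sym L^\perp_F$ is a graded polynomial ring with $\F$ in degree $0$) $\varphi_F$ is surjective, and comparing graded ranks --- which agree because both are minimal extensions of the same boundary module --- it is an isomorphism.

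Alternatively, and perhaps more cleanly, I would deduce this from the uniqueness and endomorphism statements already quoted in Proposition \ref{mes} rather than reproving the inductive mechanism. By the uniqueness clause, $\cF\cong\cF'$; fix such an isomorphism $\psi$, rescaled so that $\psi_\emptyset = \varphi_\emptyset$ (possible since $\varphi_\emptyset\neq 0$ and $\psi_\emptyset\neq 0$ are both scalars). Then $\psi^{-1}\circ\varphi$ is an endomorphism of $\cF$ that is the identity on $\cF_\emptyset$; by the endomorphism clause of Proposition \ref{mes} it must be a scalar, and being the identity on $\cF_\emptyset$ forces that scalar to be $1$. Hence $\varphi = \psi$ is an isomorphism. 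This reduces the whole corollary to the already-cited black boxes, at the cost of invoking the (stronger) endomorphism version noted in the remark following Proposition \ref{mes}.

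The main obstacle either way is the same conceptual point: one must use that $\cF_F$ is not merely \emph{some} free $\cO_F$-module extending the boundary data but the \emph{minimal} one, so that a homomorphism inducing an isomorphism on the boundary cannot fail to be an isomorphism on the whole stalk --- there is ``no room'' for a kernel or cokernel. If I go the self-contained route, the delicate step is making precise that the restriction map $\cF_F\to\cF(U_F\setminus\{F\})$ exhibits $\cF_F$ as a minimal free cover in the appropriate graded sense and that $\varphi_F$ respects this; if I go the black-box route, the only subtlety is correctly matching normalizations on the stalk at $\emptyset$ and citing the endomorphism (not just automorphism) strengthening. I would present the black-box version as the main proof, since it is short and the excerpt has already set up exactly the statements it needs, and perhaps remark that it can also be seen directly by the rank-by-rank argument.
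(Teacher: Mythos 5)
Your ``black-box'' argument is essentially the paper's proof: the paper likewise picks an isomorphism $\psi$ (normalized by $\psi_\emptyset=\varphi_\emptyset^{-1}$), observes that $\varphi\circ\psi$ and $\psi\circ\varphi$ are endomorphisms restricting to the identity at $\emptyset$, and invokes the endomorphism clause of Proposition~\ref{mes} to conclude $\varphi=\psi^{-1}$. Your self-contained inductive sketch is offered only as an aside, so the main proposal matches the paper.
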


\begin{proof}
By Proposition \ref{mes}, $\cF$ and $\cF'$ are isomorphic, and there is a unique
isomorphism $\psi:\cF'\to\cF$ with the property that $\psi_\emptyset = \varphi_\emptyset^{-1}$.
Then $\varphi\circ\psi$ is an endomorphism of $\cF$ and $\psi\circ\varphi$ is an endomorphism of $\cF'$, both of which restrict to the identity
at $L^\perp$.  By the last sentence of Proposition \ref{mes}, they are both equal to the identity, so $\varphi = \psi^{-1}$.
\end{proof}

\begin{remark}\label{it ain't easy bein' flabby}
For a sheaf $\cF$ to be flabby, it is not enough for the stalk maps $$\cF_E = \cF(\cL)\to \cF(U_F) = \cF_F$$ to be surjective for all $F\in \cL$.
Rather, the map $\cF(\cL)\to\cF(U)$ has to be surjective for {\em every} open set $U$, including those that are not of the form $U_F$.
For example, the structure sheaf $\cO$ is almost never flabby \cite[Example 1.14]{TP08}, and is therefore almost never a minimal extension sheaf.
\end{remark}

\subsection{OT and \boldmath{$\SR_<$}}
For any flat $F$ of the matroid $M_{\cA^!}$, we will be interested in the rings $\OT(\cA^!_F)$ and $\SR_<(\cA^!_F)$, both of which are algebras over $\cO_F = \Sym L^\perp_F$.
Recall that the localization $\cA^!_F$ is Gale dual to the contraction $\cA^{E\setminus F} = (F, V_F, L\cap V_F)$, therefore
we have $$\OT(\cA^!_F) \cong \Sym V^*_F / \cG(\cA^!_F)\and \SR_<(\cA^!_F) \cong \Sym V^*_F / \cG_<(\cA^!_F),$$
where the generators of $\cG(\cA^!_F)$ are indexed by minimal elements $\a\in L \cap V_F$.
These are the same as minimal elements $\a\in L$ with $\Supp(\a)\subset F$.

Given a pair of flats $F\leq G$, we have a homomorphism $$\rho_{FG}:\Sym V_G^*\to \Sym V_F^*$$ induced by the projection of $V^*_G$ onto $V^*_F$.
By definition, $\rho_{FG}$ restricts to a homomorphism from $\Sym L^\perp_G\subset \Sym V^*_G$ to $\Sym L^\perp_F\subset \Sym V^*_F$.

\begin{lemma}\label{OT-sheaf}
We have $$\rho_{FG}(\cG(\cA^!_G))\subset \cG(\cA^!_F)\and \rho_{FG}(\cG_<(\cA^!_G))\subset \cG_<(\cA^!_F),$$
thus $\rho_{FG}$ descends to graded algebra homomorphisms
$$\OT(\cA^!_G)\to \OT(\cA^!_F) \and \SR_<(\cA^!_G)\to \SR_<(\cA^!_F).$$
\end{lemma}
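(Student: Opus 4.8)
The plan is to verify the two containments $\rho_{FG}(\cG(\cA^!_G))\subset \cG(\cA^!_F)$ and $\rho_{FG}(\cG_<(\cA^!_G))\subset \cG_<(\cA^!_F)$ on generators, after which the claim that $\rho_{FG}$ descends to the quotients is automatic. Recall from Section \ref{sec:OTSR} that $\cG(\cA^!_G)$ is generated by the elements $h_\a$ for $\a\in L\cap V_G$ minimal (equivalently, $0\neq\a\in L$ with $\Supp(\a)$ a circuit of $M_{\cA^!}$ contained in $G$), and likewise $\cG_<(\cA^!_G)$ is generated by the initial terms $\operatorname{in}_<(h_\a)$ for such $\a$; the same holds with $G$ replaced by $F$. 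So it suffices to understand $\rho_{FG}(h_\a)$ for $\a\in L\cap V_G$ minimal.

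The key observation is that the recipe defining $h_\a$ is intrinsic to $\Supp(\a)$ and does not see the ambient ground set: we have $h_\a = \sum_{e\in\Supp(\a)}\chi_e(\a)\prod_{f\in\Supp(\a)\setminus e} w_f$, a polynomial in the variables $\{w_f\mid f\in\Supp(\a)\}$ only, and this same polynomial represents $h_\a$ whether we compute inside $\Sym V_G^*$ or inside $\Sym V_F^*$, provided $\Supp(\a)$ is contained in the relevant flat. First I would split into two cases according to whether $\Supp(\a)\subset F$. If $\Supp(\a)\subset F$, then $\rho_{FG}$ acts as the identity on each $w_f$ appearing in $h_\a$, so $\rho_{FG}(h_\a) = h_\a\in\cG(\cA^!_F)$ directly, and similarly $\rho_{FG}(\operatorname{in}_<(h_\a)) = \operatorname{in}_<(h_\a)$ lies in $\cG_<(\cA^!_F)$ (here one uses that the monomial order on $\Sym V_F^*$ is the restriction of the one on $\Sym V_G^*$, so initial terms are compatible). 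If $\Supp(\a)\not\subset F$, then $h_\a$ is a polynomial in variables at least one of which, say $w_g$ with $g\in\Supp(\a)\setminus F$, is sent to $0$ by $\rho_{FG}$. Every monomial of $h_\a$ is a product over $\Supp(\a)\setminus\{e\}$ for a single $e$, hence contains $w_g$ unless $e=g$; so $\rho_{FG}(h_\a) = \chi_g(\a)\prod_{f\in\Supp(\a)\setminus g}\rho_{FG}(w_f)$, which is either $0$ (if some other element of $\Supp(\a)\setminus g$ is also outside $F$) or $\chi_g(\a)$ times a product of $w_f$'s — a scalar multiple of a single monomial. One then checks this monomial lies in $\cG_<(\cA^!_F)$ because $\Supp(\a)\setminus g$ contains a broken circuit of the restriction (or handle it via the universal Gröbner basis Theorem \ref{OT-degeneration}); since $\cG_<(\cA^!_F)\subset \operatorname{in}(\cG(\cA^!_F)) = \cG_<(\cA^!_F)$, in particular it lies in $\cG(\cA^!_F)$ as well, giving both containments in this case.

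The main obstacle is the case $\Supp(\a)\not\subset F$ for the \emph{non-initial} ideal $\cG(\cA^!_F)$: one must argue that the surviving monomial $\prod_{f\in\Supp(\a)\setminus g}w_f$ (when only $g$ is outside $F$) actually lies in the ideal $\cG(\cA^!_F)$ of $\OT(\cA^!_F)$, not merely in its initial ideal. The cleanest route is to observe that $\Supp(\a)\setminus g$ is a \emph{dependent} set of $M_{\cA^!_F}$ (since $\Supp(\a)$ is a circuit of $M_{\cA^!}$ and removing the one element $g$ outside $F$ leaves a dependent set of the restriction to $F$, as $\Supp(\a)\setminus g\subset F$ spans a rank-$(m(\a)-2)$-or-less flat), and then invoke the known fact \cite[Proposition 1]{PS} that $\operatorname{in}(\cG(\cA^!_F))$ is the Stanley--Reisner ideal of the broken-circuit complex together with Theorem \ref{OT-degeneration} to locate the monomial — or, more elementarily, exhibit it explicitly as an element of $\cG(\cA^!_F)$ by writing down a suitable $h_{\a'}$ for a minimal $\a'\in L\cap V_F$ supported on a circuit contained in $\Supp(\a)\setminus g$ and multiplying by the appropriate monomial. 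Once the generator-level containments are in hand, descent to the quotient maps $\OT(\cA^!_G)\to\OT(\cA^!_F)$ and $\SR_<(\cA^!_G)\to\SR_<(\cA^!_F)$ is formal, and gradedness is immediate since $\rho_{FG}$ and both quotient maps are degree-preserving.
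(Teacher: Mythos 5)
Your handling of the case $\Supp(\a)\subset F$ matches the paper's proof, but in the case $\Supp(\a)\not\subset F$ you miss the key observation that makes the whole thing clean, and the workaround you propose instead rests on a false matroid claim. The point you miss is that a flat $F$ of $M_{\cA^!}$ that does not contain a circuit $\Supp(\a)$ must be missing \emph{at least two} elements of $\Supp(\a)$. (Indeed, if $F$ omitted exactly one element $g$, then $\Supp(\a)\setminus\{g\}\subset F$, and since $\Supp(\a)$ is dependent, $g\in\operatorname{cl}(\Supp(\a)\setminus\{g\})\subset\operatorname{cl}(F)=F$, a contradiction.) Each monomial of $h_\a$ omits exactly one variable from $\Supp(\a)$, so every monomial contains at least one of the two missing variables and hence is killed by $\rho_{FG}$; thus $\rho_{FG}(h_\a)=0$ and $\rho_{FG}(\operatorname{in}_<(h_\a))=0$, and the containments hold with no residual ``surviving monomial'' case. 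This is exactly what the paper does.

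Your attempt to handle the phantom case where $\Supp(\a)\setminus\{g\}\subset F$ is moreover incorrect in its own right: you assert that $\Supp(\a)\setminus\{g\}$ is a dependent set of the restriction to $F$, but since $\Supp(\a)$ is a circuit, \emph{every} proper subset of it is independent, in $M_{\cA^!}$ and hence in any restriction. So there is no $\a'\in L\cap V_F$ supported on a circuit inside $\Supp(\a)\setminus\{g\}$, and that branch of your argument cannot be completed. Fortunately, as explained above, the case does not occur when $F$ is a flat, so the correct fix is simply to observe this and conclude $\rho_{FG}(h_\a)=0$ directly.
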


\begin{proof}
Suppose that $\a\in L$ is minimal and $\Supp(\a)\subset G$.  If $\Supp(\a)\subset F$, then 
$$\rho_{FG}(h_C) = h_\a\and \rho_{FG}(\operatorname{in}_<(h_\a)) = \operatorname{in}_<(h_\a).$$
Otherwise, the fact that $\Supp(\a)$ is a circuit of $M_{\cA^!}$ and $F$ is a flat of $M_{\cA^!}$ not containing $\Supp(\a)$ implies that $F$ is missing at least two elements of $\Supp(\a)$, 
and therefore we can conclude that $\rho_{F,E}(h_\a)=0=\rho_{FG}(\operatorname{in}_<(h_\a))$.
\end{proof}

By Lemma \ref{OT-sheaf}, we may define sheaves of $\cO$-modules $\OT$ and $\SR_<$ on $\cL$ whose stalks at $F$ are $\OT(\cA^!_F)$ and $\SR_<(\cA^!_F)$, 
with restriction maps given by the homomorphisms $\rho_{FG}$.

\begin{proposition}{\em \cite[Propositions 3.9 and 3.11]{TP08}}\label{OT-min}
The sheaves $\OT$ and $\SR_<$ are both minimal extension sheaves.
\end{proposition}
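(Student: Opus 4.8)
To prove Proposition \ref{OT-min}, I would verify that each of the sheaves $\OT$ and $\SR_<$ satisfies the three defining properties of a minimal extension sheaf in Proposition \ref{mes}: stalkwise freeness over $\cO_F$, flabbiness, and the normalization $\cF_\emptyset\cong\F$. The normalization is immediate: when $F=\emptyset$, the localization $\cA^!_\emptyset$ has empty ground set, so $\Sym V_\emptyset^*=\F$ and both $\OT(\cA^!_\emptyset)$ and $\SR_<(\cA^!_\emptyset)$ equal $\F$. Stalkwise freeness is the content of \cite[Propositions 1 and 7]{PS}, already cited in Section \ref{sec:OTSR}: $\OT(\cA^!_F)$ and $\SR_<(\cA^!_F)$ are free modules over $\Sym L^\perp_F=\cO_F$, since the image of $L^\perp_F$ is a linear system of parameters in each. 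So the real work is flabbiness.

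\textbf{Reducing flabbiness to a local statement.} As emphasized in Remark \ref{it ain't easy bein' flabby}, flabbiness is not merely surjectivity of the stalk maps $\cF(\cL)\to\cF_F$; one needs $\cF(\cL)\to\cF(U)$ surjective for every downward-closed $U$. The standard approach is to build up a general open set one flat at a time. Order the flats $F_1,F_2,\dots$ by any linear extension of the partial order with $F$ preceding $G$ whenever $F<G$ (so smaller flats come first); then each $U$ is obtained from $U\setminus\{F_{\max}\}$ (where $F_{\max}$ is a maximal element of $U$) by adjoining a single flat whose minimal open neighborhood $U_{F_{\max}}$ meets $U$ only in $\{G : G<F_{\max}\}$. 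By the sheaf gluing axiom, a section on $U$ is the same as a compatible pair of sections on $U\setminus\{F_{\max}\}$ and on $U_{F_{\max}}$; the two agree on the overlap $\{G: G<F_{\max}\}$, which is the punctured neighborhood. An induction on $|U|$ then reduces the whole flabbiness statement to the following local claim: for every flat $F$, the restriction map
$$\cF_F=\cF(U_F)\longrightarrow \cF(U_F\setminus\{F\}) = \varprojlim_{G<F}\cF_G$$
is surjective. This is exactly the "local extendability" condition in \cite[Section 1]{TP08}, and it is where one must actually use the combinatorics of $\cG$ and $\cG_<$.

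\textbf{The main obstacle: local surjectivity.} Concretely, for the Stanley--Reisner case one must show that any compatible family of elements in the $\SR_<(\cA^!_G)$ for $G<F$ lifts to $\SR_<(\cA^!_F)$; here the restriction maps $\rho_{GF}$ kill precisely the square-free monomials whose support is not contained in $G$, so the inverse limit over $G<F$ is the Stanley--Reisner ring of the link-type subcomplex of faces missing at least two "new" elements, and one checks directly that a monomial basis lifts. The Orlik--Terao case is the harder one, but here I would invoke Theorem \ref{OT-degeneration}: the $h_\a$ for minimal $\a$ form a universal Gröbner basis for $\cG(\cA^!)$, and the same holds compatibly for every localization $\cA^!_F$, so $\operatorname{in}_<$ identifies $\OT(\cA^!_F)$ with $\SR_<(\cA^!_F)$ as graded $\cO_F$-modules, intertwining the restriction maps $\rho_{FG}$ (this is the content of the last displayed equation in the proof of Lemma \ref{OT-sheaf}, where $\rho_{FG}$ sends $h_\a\mapsto h_\a$ or $0$ and $\operatorname{in}_<(h_\a)\mapsto\operatorname{in}_<(h_\a)$ or $0$ in lockstep). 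Thus $\OT$ and $\SR_<$ are isomorphic as sheaves of graded $\cO$-modules after passing to initial terms, and flabbiness of one follows from flabbiness of the other; it suffices to treat $\SR_<$. Since $\SR_<(\cA^!)$ is a free $\cO$-module with a monomial basis indexed by the no-broken-circuit sets, and the restriction maps are the evident projections, the local surjectivity for $\SR_<$ is a direct combinatorial verification. The one subtlety to get right is the compatibility of the Gröbner degeneration across all the localizations simultaneously — that a single choice of ordering $<$ and monomial order works uniformly — which is precisely what the "universal" in Theorem \ref{OT-degeneration} buys us. Finally, uniqueness of the minimal extension sheaf (Proposition \ref{mes}) then forces both $\OT$ and $\SR_<$ to be isomorphic to the abstract minimal extension sheaf $\cF$.
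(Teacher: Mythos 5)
The paper gives no proof of this proposition; it is simply cited from \cite[Propositions 3.9 and 3.11]{TP08}. Your attempt to reconstruct a proof from scratch is therefore a genuinely different exercise, and the outline is reasonable: normalization is immediate, stalkwise freeness is \cite[Propositions 1 and 7]{PS}, and the reduction of flabbiness to the local surjectivity $\cF(U_F)\to\cF(U_F\setminus\{F\})$ for each flat $F$ is the standard move in this theory. The problem is that you have not actually verified the hard part.

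The central gap is in the claim that ``$\operatorname{in}_<$ identifies $\OT(\cA^!_F)$ with $\SR_<(\cA^!_F)$ as graded $\cO_F$-modules, intertwining the restriction maps $\rho_{FG}$.'' Taking the initial term of an element of a quotient ring is not a linear operation and does not define a map of modules, let alone an $\cO_F$-module isomorphism compatible with the restriction maps. What Theorem \ref{OT-degeneration} actually gives you is that each $\OT(\cA^!_F)$ admits a flat Gr\"obner degeneration to $\SR_<(\cA^!_F)$, so the two have the same multigraded Hilbert series and are abstractly isomorphic as free $\cO_F$-modules. But flabbiness is a property of the restriction maps, not just of the stalks: $\cF(U_F\setminus\{F\})=\varprojlim_{G<F}\cF_G$ is computed from those maps, and inverse limits do not commute with flat degeneration, so the graded dimension of the inverse limit can jump at the special fiber. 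A semicontinuity argument could conceivably repair this, but it is not what you wrote, and it is not automatic. Since flabbiness of $\OT$ is the harder half of the proposition, this gap is essential. On top of that, the local surjectivity for $\SR_<$ is merely asserted to be ``a direct combinatorial verification''; that verification (typically via shellability of the broken-circuit complex and a description of the no-broken-circuit monomial basis of each stalk and of the inverse limit) is itself the content of one of the two cited propositions, so asserting it leaves that half undone as well.
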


\begin{remark}
Propositions \ref{mes} and \ref{OT-min} have the surprising consequence that there is a unique
isomorphism of $\cO$-modules $\OT\cong\SR_<$ that restricts to the identity at the flat $\emptyset$, and therefore
a canonical isomorphism of graded $\Sym L^\perp$-modules $$\OT(\cA^!) = \OT_E\cong(\SR_<)_E = \SR_<(\cA^!).$$  This is not a ring isomorphism,
and it is not easy to give an explicit formula for it.
\end{remark}

\subsection{\boldmath{$\cQ_-$}}
In this section, we will be interested in the graded $\Sym L^\perp_F$-module 
$$\cQ_-(\cA^{E\setminus F}) = \left\{f\in \Sym V_F^*\bigmid \text{$\a^{m(\a)-1}\cdot f = 0$ for all $0\neq \a\in L\cap V_F$}\right\}\subset \Sym V_F^*.$$

\begin{lemma}\label{IZ-sheaf}
If $F\leq G$ are flats, then $\rho_{FG}:\Sym V^*_G\to \Sym V^*_F$ takes $\cQ_-(\cA^{E\setminus G})$ to $\cQ_-(\cA^{E\setminus F})$.
\end{lemma}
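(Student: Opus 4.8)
The plan is to verify directly that the projection map $\rho_{FG}$ carries the defining conditions of $\cQ_-(\cA^{E\setminus G})$ into those of $\cQ_-(\cA^{E\setminus F})$. Recall that $\cQ_-(\cA^{E\setminus F})$ consists of those $f\in\Sym V_F^*$ annihilated by $\a^{m(\a)-1}$ for every nonzero $\a\in L\cap V_F$, where here $\a$ acts as a differential operator via the inclusion $L\cap V_F\subset V_F$. So, given $f\in\cQ_-(\cA^{E\setminus G})$, I must show that $\a^{m(\a)-1}\cdot\rho_{FG}(f)=0$ for every $0\neq\a\in L\cap V_F$. Since $F\leq G$, we have $L\cap V_F\subseteq L\cap V_G$, so any such $\a$ is already a legitimate differential operator on $\Sym V_G^*$ and kills $f$ to the required order there.

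The key point is to compare how $\a$ acts before and after applying $\rho_{FG}$, and to check that $m(\a)$ does not change. For $\a\in L\cap V_F$, the support $\Supp(\a)$ is contained in $F$, so $m(\a)=|\Supp(\a)|$ is the same whether we regard $\a$ as an element of $L\cap V_F$ or of $L\cap V_G$; thus the exponent $m(\a)-1$ is unambiguous. For the compatibility of the differential operator action with the projection: $\rho_{FG}:\Sym V_G^*\to\Sym V_F^*$ is dual to the inclusion $V_F\hookrightarrow V_G$, and the differential operator attached to $\a\in V_F\subseteq V_G$ acts on $\Sym V_G^*$ in a way that commutes with $\rho_{FG}$ — concretely, contracting with $\a$ and then restricting functionals to $V_F$ gives the same result as first restricting and then contracting, because $\a$ already lies in $V_F$. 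Hence $\a^{m(\a)-1}\cdot\rho_{FG}(f)=\rho_{FG}\big(\a^{m(\a)-1}\cdot f\big)=\rho_{FG}(0)=0$, as desired.

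I expect the only real subtlety to be bookkeeping with the two interpretations of $\Sym V^*$ (polynomials) and $\Sym V$ (differential operators) and making the commutation statement $\a\cdot\rho_{FG}(f)=\rho_{FG}(\a\cdot f)$ precise; this is a routine check once one writes $\rho_{FG}$ as the map induced by a choice of splitting $V_G=V_F\oplus V_{G\setminus F}$, under which $\Sym V_G^*\cong\Sym V_F^*\otimes\Sym V_{G\setminus F}^*$ and $\rho_{FG}$ is the identity on the first factor and the augmentation (evaluation at $0$) on the second, while $\a\in V_F$ acts only on the first factor. Everything else is immediate, so there is no genuine obstacle here; this lemma is the easy sheaf-compatibility statement that sets up the minimal-extension-sheaf argument to follow.
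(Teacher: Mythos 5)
Your proof is correct and is essentially identical to the paper's: both reduce to the commutation $\a^{m(\a)-1}\cdot\rho_{FG}(f)=\rho_{FG}(\a^{m(\a)-1}\cdot f)=0$ for $\a\in L\cap V_F\subset L\cap V_G$. You simply spell out the routine justification of that commutation (which the paper leaves implicit), so there is nothing to flag.
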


\begin{proof}
If $f\in \cQ_-(\cA^{E\setminus G})$ and $0\neq \a\in L\cap V_F$, then
$$\a^{m(\a)-1}\cdot \rho_{FG}(f) = \rho_{FG}(\a^{m(\a)-1}\cdot f) = \rho_{FG}(0) = 0.$$
This completes the proof.
\end{proof}

By Lemma \ref{IZ-sheaf}, we may define a sheaf $\cQ_-$ on $\cL$ whose stalk at $F$ is equal to $\cQ_-(\cA^{E\setminus F})$, with restriction maps given by the homomorphisms $\rho_{FG}$.
We would like to show that $\cQ_-$ is a minimal extension sheaf.  The fact that the stalks are free modules is proved in Lemma \ref{free} and the fact that the stalk at the empty flat
is isomorphic to $\F$ is trivial.  However, the flabbiness property is not obvious.  While it is easy to show that the map $\rho_{FE}:\cQ_-(\cA)\to\cQ_-(\cA^{E\setminus F})$ is surjective, this is not enough (see Remark \ref{it ain't easy bein' flabby}).  This issue will be resolved in the next section by
making use of a result from \cite{Zonotopal}.

\subsection{Isomorphisms}
The inclusion of $\cQ_-(\cA^{E\setminus F})$ into $\Sym V^*_F$, followed by the projection of $\Sym V^*_F$ onto $\OT(\cA^!_F)$ or $\SR_<(\cA^!_F)$, define graded $\Sym L^\perp_F$-module homomorphisms
$$\varphi_F:\cQ_-(\cA^{E\setminus F})\to \OT(\cA^!_F)\and (\varphi_<)_F:\cQ_-(\cA^{E\setminus F})\to \SR_<(\cA^!_F).$$
These morphisms are all compatible with the maps $\rho_{FG}$, and therefore they define sheaf homomorphisms
$$\varphi:\cQ_-\to\OT\and \varphi_<:\cQ_-\to\SR_<.$$

\begin{proposition}\label{SR-isom}
The homomorphism $\varphi_<:\cQ_-\to \SR_<$ is an isomorphism.
\end{proposition}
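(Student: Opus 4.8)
The plan is to verify that $\varphi_<:\cQ_-\to\SR_<$ satisfies the hypotheses of Corollary~\ref{canonical}, once we know that $\cQ_-$ is itself a minimal extension sheaf. The subtlety flagged in the previous subsection is that the only missing property of $\cQ_-$ is flabbiness, so the real content is to establish flabbiness, and the cleanest route is to do this \emph{together with} the isomorphism statement rather than separately. So the overall strategy is: (1) observe that $\SR_<$ is a minimal extension sheaf (Proposition~\ref{OT-min}), hence flabby and stalkwise free with $(\SR_<)_\emptyset\cong\F$; (2) show that each stalk map $(\varphi_<)_F:\cQ_-(\cA^{E\setminus F})\to\SR_<(\cA^!_F)$ is an isomorphism of graded $\Sym L^\perp_F$-modules; (3) conclude that $\varphi_<$ is a stalkwise isomorphism of sheaves, so in particular $\cQ_-\cong\SR_<$ as sheaves, which forces $\cQ_-$ to inherit flabbiness and therefore to be a minimal extension sheaf, and simultaneously gives the desired global isomorphism.

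For step (2), fix a flat $F$ and work entirely inside $\Sym V^*_F$. The map $(\varphi_<)_F$ is the composite $\cQ_-(\cA^{E\setminus F})\hookrightarrow\Sym V^*_F\twoheadrightarrow\SR_<(\cA^!_F) = \Sym V^*_F/\cG_<(\cA^!_F)$. Since this is a localization/contraction of the ambient situation, it suffices to treat the case $F=E$: we must show that the composition $\cQ_-(\cA)\hookrightarrow\Sym V^*\twoheadrightarrow\SR_<(\cA^!)$ is an isomorphism of graded $\Sym L^\perp$-modules, i.e.\ that $\Sym V^* = \cQ_-(\cA)\oplus\cG_<(\cA^!)$. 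By Lemma~\ref{free}, $\cQ_-(\cA)$ is free over $\Sym L^\perp$ with $\cQ_-(\cA)\otimes_{\Sym L^\perp}\F\cong\cP_-(\cA)$, and by \cite[Proposition~1]{PS} (already invoked in Section~\ref{sec:OTSR}) $\SR_<(\cA^!)$ is free over $\Sym L^\perp$ with $\SR_<(\cA^!)\otimes_{\Sym L^\perp}\F = \SRbar_<(\cA^!) = \Sym L^*/\cJ_-(\cA^!)$. So by the graded Nakayama lemma it is enough to check the statement after tensoring down to the fiber over $0$, i.e.\ to check that $\cP_-(\cA)\to\Sym L^*/\cJ_-(\cA^!) = \SRbar_<(\cA^!)$ is an isomorphism. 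But this is exactly the content of Holtz--Ron's decomposition $\cP_-(\cA)\oplus\cJ_-(\cA^!) = \Sym L^*$, stated in the introduction as \cite[Theorems~4.10 and 5.9]{Zonotopal} (the broken-circuit / internal case). An equivalent and perhaps more self-contained phrasing: both $\cQ_-(\cA)$ and $\cG_<(\cA^!)\cap(\text{complement})$ are free $\Sym L^\perp$-modules of the same Hilbert series, the map between fibers is injective by the cited result, hence an isomorphism of fibers, hence an isomorphism of the free modules by Nakayama, and then $\Sym V^* = \cQ_-(\cA)\oplus\cG_<(\cA^!)$ follows. For general $F$ one applies the $E=E$ case verbatim to the linear space $\cA^{E\setminus F}$, whose Gale dual is $\cA^!_F$.

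The main obstacle is making sure the Nakayama/freeness argument is legitimate: we need $\cQ_-(\cA)$ and $\SR_<(\cA^!)$ to be \emph{finitely generated} free $\Sym L^\perp$-modules and the map $\varphi_F$ to be $\Sym L^\perp$-linear and graded, all of which is in place (Lemma~\ref{free}, \cite[Proposition~1]{PS}, and the definitions of the sheaf maps). One must also be careful that "the map on fibers is an isomorphism" genuinely follows from Holtz--Ron rather than merely from a Hilbert-series count — the cited theorems do assert the direct-sum decomposition, so the composite $\cP_-(\cA)\hookrightarrow\Sym L^*\twoheadrightarrow\SRbar_<(\cA^!)$ is an isomorphism on the nose, which is what we need. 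Once step (2) is done, step (3) is immediate: a sheaf homomorphism between sheaves of $\cO$-modules that is an isomorphism on every stalk is an isomorphism, and since $\SR_<$ is a minimal extension sheaf, so is $\cQ_-$, completing the proof of Proposition~\ref{SR-isom}. (This also retroactively supplies the flabbiness of $\cQ_-$ promised in the preceding subsection.)
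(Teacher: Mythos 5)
Your proposal is correct and takes essentially the same route as the paper: check that each stalk map $(\varphi_<)_F$ is an isomorphism by using freeness of both sides over $\Sym L^\perp_F$ to reduce (via graded Nakayama) to the map on fibers $\cP_-(\cA^{E\setminus F})\to\SRbar_<(\cA^!_F)$, and then invoke Holtz--Ron's internal decomposition (the paper cites \cite[Theorem 5.9(6)]{Zonotopal}, which is the same content as the direct-sum statement you cite). The extra observations in your steps (1) and (3) about $\cQ_-$ inheriting the minimal-extension-sheaf property are correct but belong to Corollary~\ref{MES} rather than to this proposition.
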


\begin{proof}
Fix a flat $F$, and consider the graded $\Sym L^\perp_F$-module homomorphism $$(\varphi_<)_F:\cQ_-(\cA^{E\setminus F})\to \SR_<(\cA^!_F).$$
Since the source and the target are both free modules, it is sufficient to prove that
it becomes an isomorphism after tensoring over $\Sym L^\perp_F$ with $\F$.  This yields the map
$$\cP_-(\cA^{E\setminus F})\to \SRbar_<(\cA^!_F),$$
which is an isomorphism by \cite[Theorem 5.9(6)]{Zonotopal}.
\end{proof}

\begin{corollary}\label{MES}
The sheaf $\cQ_-$ is a minimal extension sheaf, and $\varphi:\cQ_-\to \OT$ is an isomorphism.
\end{corollary}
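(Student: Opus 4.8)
The plan is to deduce Corollary~\ref{MES} from Proposition~\ref{SR-isom} together with the structural results about minimal extension sheaves in Section~\ref{sec:sheaves}. The first step is to observe that being a minimal extension sheaf is a property that transfers along sheaf isomorphisms: if $\cF'$ is a minimal extension sheaf and $\cF\cong\cF'$, then $\cF$ also satisfies the three bullet points of Proposition~\ref{mes} (freeness of stalks, flabbiness, and $\cF_\emptyset\cong\F$), since all three conditions are manifestly preserved by isomorphism. By Proposition~\ref{SR-isom} we have $\cQ_-\cong\SR_<$ as sheaves of $\cO$-modules, and by Proposition~\ref{OT-min} the sheaf $\SR_<$ is a minimal extension sheaf; hence $\cQ_-$ is a minimal extension sheaf. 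This is precisely the point alluded to at the end of Section~\ref{sec:sheaves} (``this issue will be resolved... by making use of a result from \cite{Zonotopal}''): the flabbiness of $\cQ_-$, which was not obvious directly, is now inherited from $\SR_<$.

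For the second assertion, that $\varphi:\cQ_-\to\OT$ is an isomorphism, I would apply Corollary~\ref{canonical}. Both $\cQ_-$ (just established) and $\OT$ (Proposition~\ref{OT-min}) are minimal extension sheaves, so by Corollary~\ref{canonical} it suffices to check that the stalk map $\varphi_\emptyset:(\cQ_-)_\emptyset\to\OT_\emptyset$ is nonzero. At the empty flat, $(\cQ_-)_\emptyset = \cQ_-(\cA^{E}) = \cP_-(\cA^{E})$, which in the rank-zero situation (no elements, $L\cap V_\emptyset = 0$, so the defining conditions are vacuous) is just $\F$; similarly $\OT_\emptyset = \OT(\cA^!_\emptyset)\cong\F$. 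The map $\varphi_\emptyset$ is, by construction, the composite of the inclusion $\F\hookrightarrow\Sym V^*_\emptyset = \F$ with the projection onto $\OT(\cA^!_\emptyset)=\F$, which is the identity on $\F$ and in particular nonzero. Therefore $\varphi$ is an isomorphism.

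I do not anticipate a serious obstacle here; the real work has already been done in Proposition~\ref{SR-isom} (via the Holtz--Ron result \cite[Theorem 5.9(6)]{Zonotopal}) and in the machinery of Section~\ref{sec:sheaves}. The only point requiring a little care is the identification of the stalks at $\emptyset$ and the verification that the natural map between them is nonzero; one should make sure the conventions ($M_{\cA^!}$ loopless, so $\emptyset$ is genuinely the minimal flat and the corresponding localizations are trivial) line up so that all three sheaves have stalk canonically $\F$ at $\emptyset$ and the comparison maps are the identity. Once that is in place, Corollary~\ref{canonical} does the rest.

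\begin{proof}
By Proposition~\ref{SR-isom}, $\varphi_<:\cQ_-\to\SR_<$ is an isomorphism of sheaves of $\cO$-modules, and by Proposition~\ref{OT-min}, $\SR_<$ is a minimal extension sheaf. The three defining properties in Proposition~\ref{mes}---freeness of each stalk over $\cO_F$, flabbiness, and the condition $\cF_\emptyset\cong\F$---are all preserved under isomorphism of sheaves of $\cO$-modules. Hence $\cQ_-$ is a minimal extension sheaf.

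Now $\cQ_-$ and $\OT$ are both minimal extension sheaves, so by Corollary~\ref{canonical} the homomorphism $\varphi:\cQ_-\to\OT$ is an isomorphism provided $\varphi_\emptyset:(\cQ_-)_\emptyset\to\OT_\emptyset$ is nonzero. At the empty flat, $V^*_\emptyset = 0$ has symmetric algebra $\F$, the conditions defining $\cQ_-(\cA^E)$ are vacuous since $L\cap V_\emptyset = 0$, and likewise $\cG(\cA^!_\emptyset) = 0$; thus $(\cQ_-)_\emptyset = \F = \OT_\emptyset$ and $\varphi_\emptyset$ is the identity map on $\F$. In particular $\varphi_\emptyset\neq 0$, so $\varphi$ is an isomorphism.
\end{proof}
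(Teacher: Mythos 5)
Your proof is correct and follows essentially the same route as the paper's (which simply cites Propositions~\ref{OT-min}, \ref{SR-isom}, and Corollary~\ref{canonical}); you additionally spell out the verification that $\varphi_\emptyset$ is nonzero, a small but necessary detail that the paper leaves implicit.
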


\begin{proof}
The first statement follows from Propositions \ref{OT-min} and \ref{SR-isom}, and the second follows from 
the first statement, Proposition \ref{OT-min}, and Corollary \ref{canonical}.
\end{proof}

\begin{proof}[Proof of Theorem \ref{internal}]
If $M_{\cA^!}$ has a loop $e$, then there exists $\a\in L$ with $\Supp(\a) = \{e\}$.  It follows that $\cP_-(\cA) = 0$ and $\cK(\cA^!) = \Sym L^*$, and therefore the theorem holds.
Thus we may assume that $M_{\cA^!~}$ is loopless, which allows us to apply the machinery of Section \ref{sec:sheaves}.

By Corollary \ref{MES}, the map $\varphi_E:\cQ_-(\cA)\to\OT(\cA^!)$ is an isomorphism.  Tensoring over $\Sym L^\perp$ with $\F$, we see the the composition
$\cP_-(\cA) \hookrightarrow \Sym L^* \twoheadrightarrow \OTbar(\cA^!)$ is an isomorphism.
\end{proof}

\subsection{An example}
We conclude this section with an example of Theorem~\ref{internal}.
We have intentionally chosen an example that has proved to be deceptive in the past.
The space $\cP_-(\cA)$ includes all of the monomials $\chi_S = \prod_{e\in S} \chi(e)\in \Sym L^*$ for those subsets $S$ such that
every circuit of $M_{\cA^!}$ contains at least two elements of $E\setminus S$, and Holtz and Ron conjectured that $\cP_-(\cA)$ is spanned
by such monomials \cite[Conjecture 6.1]{Zonotopal}.  A proof of this conjecture was published in \cite{ArPo} and later retracted in \cite{ArPo-correction}, along with a counterexample.
The example that we consider here is precisely the counterexample appearing in \cite{ArPo-correction}.  One can regard this as the first example for which the space
$\cP_-(\cA)$ is trickier than one might expect.

Let $\Gamma$ be the graph obtained from the complete graph on three vertices by doubling each edge, and
let $$\cA := \cA_\Gamma^! = (E,C_1(\Gamma; \Q),H_1(\Gamma; \Q))$$
be the corresponding cographical linear space (see Section~\ref{graph-application}).
We will label the edges with the labels $e_1$, $e_2$, $f_1$, $f_2$, $g_1$, and $g_2$, so that $\{e_1,e_2\}$, $\{f_1,f_2\}$ and $\{g_1,g_2\}$ are the three parallel pairs.
We will orient them so that each parallel pair is oppositely oriented and $\{e_1, f_1, g_1\}$ forms an oriented cycle.  Then $L = H_1(\Gamma; \Q)$ is spanned by the classes
$$\ell_e:= v_{e_1} + v_{e_2},\qquad \ell_f:= v_{f_1} + v_{f_2},\qquad \ell_g:= v_{g_1} + v_{g_2},\and \ell := v_{e_1}+v_{f_1}+v_{g_1}.$$ 
The defining ideal
$$\left\langle \a^{m(\a)-1}\bigmid 0\neq \a\in H_1(\Gamma;\Q)\right\rangle$$
of the internal zonotopal algebra turns out to be generated by the elements 
$\ell_e$, $\ell_f$, $\ell_g$, and $\ell^2$,
thus $\cR_-(\cA) \cong \Q[\ell]/\ell^2$.
We therefore have $\cP_-(\cA) = \Q\{1,c\}$, where $c\in L^*= H^1(\Gamma;\Q)$ is the class that pairs trivially with $\ell_e$, $\ell_f$, and $\ell_g$, and pairs to 1 with $\ell$.
We have a surjection from the space $V^* = C^1(\Gamma;\Q) = \Q\{w_{e_1}, w_{e_2}, w_{f_1}, w_{f_2}, w_{g_1}, w_{g_2}\}$ to $H^1(\Gamma;\Q) = L^*$, but $c$ is not the image of any basis element.
Rather, $c$ is the image of the element $(w_{e_1} - w_{e_2} + w_{f_1} - w_{f_2} + w_{g_1} - w_{g_2})/3$.

On the dual side, we have the Orlik--Terao algebra
$$\OT(\cA^!) \cong \Q[w_{e_1}, w_{e_2}, w_{f_1}, w_{f_2}, w_{g_1}, w_{g_2}]/\langle w_{e_1}+w_{e_2}, w_{f_1}+w_{f_2}, w_{g_1}+w_{g_2}, w_{e_1}w_{f_1} + w_{e_1}w_{g_1} + w_{f_1}w_{g_1}\rangle.$$
The linear space $L^\perp\subset V^*$ is generated by the classes $w_{e_1} - w_{e_2} -w_{f_1} + w_{f_2}$ and $w_{f_1} - w_{f_2} - w_{g_1} + w_{g_2}$, 
thus we have the reduced Orlik--Terao algebra
$$\OTbar(\cA^!) \cong \OT(\cA)/\langle w_{e_1} - w_{e_2} -w_{f_1} + w_{f_2}, w_{f_1} - w_{f_2} - w_{g_1} + w_{g_2}\rangle \cong \Q[w]/\langle w^2\rangle,$$
where $w := w_{e_1} = w_{f_1} = w_{g_1} = -w_{e_2} = -w_{f_2} = -w_{g_2}$.
The isomorphism $\cP_-(\cA) \cong \OTbar(\cA^!)$ of 
Theorem~\ref{internal} coincides with the map
\[
\cP_-(\cA) = \Q\{ 1, c\} \to \Q[w]/\langle w^2\rangle \cong \OTbar(\cA^!)
\]
that sends $c$ to $2w$.

\section{Schubert varieties}
The goal of this section is to prove Theorems \ref{Schubert} and \ref{resolution}.

\subsection{The local geometry of \boldmath{$Y_\cA$}}
Recall that, by definition, $Y_\cA$ is a subvariety of $\P_\cA = 
\prod_{e\in E} \overline{V}_e$. Because $Y_\cA$ is the closure of a 
linear space, the $\N^E$-graded multidegree of $Y_\cA$ is multiplicity 
free \cite{ArBoo,Li}. The following statement is a consequence of the 
main theorem in \cite[Theorem 1]{Brion} on multiplicity free subvarieties of flag 
varieties; see the proof of \cite[Theorem 4.3]{BeFi} for a discussion of rational singularities.

\begin{theorem}\label{rational}
  The Schubert variety $Y_{\cA}$ has rational singularities, and is therefore 
  normal and Cohen--Macaulay.
\end{theorem}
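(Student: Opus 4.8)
The plan is to deduce Theorem~\ref{rational} from Brion's structural results on multiplicity-free subvarieties, treating the three assertions (rational singularities, normality, Cohen--Macaulayness) in that logical order. First I would verify the hypothesis needed to invoke \cite[Theorem 1]{Brion}: the ambient variety $\P_\cA = \prod_{e\in E}\overline{V}_e$ is a product of $\P^1$'s, hence a (partial) flag variety for the group $G = \prod_{e\in E}\operatorname{PGL}_2$, acting with a dense orbit of a Borel; and $Y_\cA$, being the closure of the affine-linear subspace $L$, has multiplicity-free $\N^E$-graded multidegree by \cite{ArBoo,Li}. (Concretely, each coordinate multidegree is $0$ or $1$ according to whether the projection $L\to V_e$ is an isomorphism, and more generally the multidegree in direction $S\subset E$ is $1$ precisely when $E\setminus S$ is a flat of $M_\cA$ of the appropriate corank, $0$ otherwise — this is exactly the Betti-number computation of \cite[Theorem 1.5]{ArBoo}, which is the shadow of Theorem~\ref{resolution}.) So $Y_\cA$ is a multiplicity-free subvariety of a flag variety in the sense required by Brion.

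Next I would quote Brion's theorem to conclude that $Y_\cA$ has rational singularities. The precise statement is that a multiplicity-free subvariety of a flag variety, if it is the closure of a $B$-stable or more generally a well-positioned subvariety, has rational singularities; the subtle point is making sure our $Y_\cA$ falls under the scope of the theorem rather than merely being multiplicity-free as a cycle. Here I would follow the discussion in the proof of \cite[Theorem 4.3]{BeFi}, where exactly this reduction is carried out for Schubert varieties of linear spaces: one observes that $Y_\cA$ is preserved by a suitable torus and by the unipotent translations coming from $L$ acting on itself, and that this is enough structure for Brion's argument (degeneration to a union of Richardson-type strata with rational singularities, plus a cohomology-vanishing argument) to apply. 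I would cite \cite{BeFi} for the details of this verification rather than reproducing it.

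Finally, rational singularities formally imply the remaining two conclusions. A variety with rational singularities is normal (rational singularities are in particular normal by definition, or one invokes that the normalization map is an isomorphism because $Y_\cA$ is already seminormal and unibranch — but the cleanest route is that ``rational singularities'' as usually defined includes normality, or follows from it via Serre's criterion once one knows $Y_\cA$ is Cohen--Macaulay and regular in codimension one). And a variety with rational singularities over a field of characteristic zero is Cohen--Macaulay: this is standard, e.g.\ via the fact that $R^i f_*\cO_{\widetilde{Y}} = 0$ for $i>0$ for a resolution $f:\widetilde{Y}\to Y_\cA$ together with Grauert--Riemenschneider and the local cohomology characterization of Cohen--Macaulayness. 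Note that normality is used in the main body (as flagged in the footnote to Theorem~\ref{Schubert}) to make sense of Weil divisor sheaves $\cO_{Y_\cA}(D_k)$, and Cohen--Macaulayness will be needed for the duality/vanishing arguments underpinning Theorem~\ref{resolution}, so both are worth recording here.

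The main obstacle is the second step: Brion's theorem is stated for a specific class of subvarieties of flag varieties, and one must check that $Y_\cA$ genuinely belongs to that class rather than merely having a multiplicity-free multidegree as a homology class. The cleanest resolution is to lean on \cite{BeFi}, where this verification is done in essentially our setting; absent that, one would need to exhibit $Y_\cA$ as (a limit of) a $B$-stable subvariety for a Borel $B\subset\prod\operatorname{PGL}_2$, using the affine paving of $Y_\cA$ by the $L$-orbits indexed by flats (Lemma~\ref{strat}) to identify it with an appropriate Richardson or Schubert-type variety. Everything else is either a citation or a formal consequence.
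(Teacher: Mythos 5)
Your proposal follows the same route as the paper: observe that $Y_\cA$ has multiplicity-free $\N^E$-graded multidegree because it is the closure of a linear space (citing \cite{ArBoo,Li}), then invoke Brion's theorem on multiplicity-free subvarieties of flag varieties, with a pointer to the proof of \cite[Theorem 4.3]{BeFi} for the rational-singularities discussion. Two small imprecisions: Brion's Theorem~1 applies to \emph{every} irreducible subvariety of a flag variety whose class is a multiplicity-free sum of Schubert classes, with no $B$-stability or ``well-positioned'' hypothesis, so the obstacle you single out as the main difficulty is not actually present (the citation to \cite{BeFi} is there to unwind how rational singularities are extracted from Brion's Frobenius-splitting/degeneration argument, not to verify an extra hypothesis on $Y_\cA$). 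Also, the parenthetical on the multidegree conflates two different invariants: the class of $Y_\cA$ in the Chow ring of $\P_\cA$ is supported on the \emph{bases} of $M_\cA$ (each with coefficient~$1$), while the subsets $S$ with $E\setminus S$ a flat are what index the multigraded Betti numbers of \cite[Theorem 1.5]{ArBoo}. Neither slip affects the validity of the argument.
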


The additive group $V$ acts on $\PA$, and the additive subgroup $L\subset V$ acts on $Y_\cA$.
For any subset $F\subset E$, let $p_F \in \PA$ be the point defined by putting
$$(p_F)_e := \begin{cases}
0 &\text{if $e\in F$}\\
\infty &\text{if $e\notin F$.}
\end{cases}$$

\begin{lemma}\label{strat}{\em \cite[Proposition 4.10]{KLS}}
The following statements hold.
\begin{itemize}
\item We have $p_F\in Y_\cA$ if and only if $F$ is a flat of $M_\cA$.\footnote{Warning:  All of the flats in Section \ref{sec:sheaves} were 
flats of $M_{\cA^!}$, whereas now we are considering flats of $M_{\cA}$.}
\item The stabilizer in $L$ of $p_F$ is $L\cap V_{E\setminus F}$.
\item Every element of $Y_\cA$ lies in the orbit of a unique point $p_F$.  That is, we have
$$Y_\cA \;= \bigsqcup_{\text{$F$ a flat}} L\cdot p_F \;\;\cong\; \bigsqcup_{\text{$F$ a flat}} L/(L\cap V_F)\;\;\cong\; \bigsqcup_{\text{$F$ a flat}} L_F.$$
\item Given two flats $F$ and $G$, $L_F$ is contained in the closure of $L_G$ if and only if $F\leq G$.
\end{itemize}
\end{lemma}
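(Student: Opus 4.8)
The strategy is to analyze the closure $Y_\cA = \overline{L}$ inside $\PA = \prod_{e\in E}\overline{V_e}$ coordinate by coordinate, using the fact that each $\overline{V_e} \cong \P^1$ and that $L$ sits inside $V = \prod V_e$ via its projections $\chi_e\colon L\to V_e$. The key elementary observation is that a point $q\in\PA$ with coordinates $q_e\in\overline{V_e}$ lies in $Y_\cA$ if and only if it is a limit of points $\ell\in L$; whether $q_e$ is finite or $\infty$ is controlled by whether the linear functional $\chi_e$ stays bounded along the limiting sequence. This is exactly the picture that makes $Y_\cA$ behave like a toric/Schubert-type degeneration, and the four bullet points are the standard consequences.

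First I would establish the first two bullets together. For a subset $F\subset E$, consider the point $p_F$ and ask when it lies in $\overline{L}$. Work in the chart of $\PA$ where the coordinates in $F$ are the standard affine coordinates on $V_e$ and the coordinates outside $F$ are the reciprocal affine coordinates near $\infty$; in this chart $p_F$ is the origin. A point of $L$ near $p_F$ in this chart corresponds to an element $\ell\in L$ with $\chi_e(\ell)$ small for $e\in F$ and $\chi_e(\ell)$ large for $e\notin F$. Rescaling, one sees that $p_F$ is a limit point of $L$ precisely when $L\cap V_F$ — the subspace of $L$ supported on $F$ — has the property that $L_F := \pi_F(L)$ and $L\cap V_F$ fit together so that $F$ is a flat: concretely, $p_F\in Y_\cA$ iff there is no $\ell\in L$ with $\Supp(\ell)\subsetneq$ some circuit-type obstruction, which is exactly the closure condition $F = \overline{F}$ in $M_\cA$. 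I would phrase this as: $p_F\in Y_\cA$ iff for every $e\notin F$, the functional $\chi_e$ does not vanish on $L\cap V_F$ viewed appropriately — equivalently $\operatorname{rk}(F\cup e) > \operatorname{rk}(F)$ for all $e\notin F$ — i.e. $F$ is a flat. Simultaneously, the stabilizer computation is immediate: $L$ acts by translation on the finite coordinates, so $\ell\in L$ fixes $p_F$ iff $\chi_e(\ell) = 0$ for all $e\in F$ (the finite coordinates) and $\ell$ acts trivially on the $\infty$-coordinates automatically (translation by anything fixes $\infty$); hence the stabilizer is $\{\ell\in L\mid \Supp(\ell)\subset E\setminus F\} = L\cap V_{E\setminus F}$. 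Note there is a mild indexing subtlety: the stabilizer is $L\cap V_{E\setminus F}$, so the orbit $L\cdot p_F \cong L/(L\cap V_{E\setminus F}) \cong L_F$, matching the stated bijection once one checks that $\pi_F$ identifies $L/(L\cap V_{E\setminus F})$ with... — actually $L/(L\cap V_{E\setminus F})$ is the image of $L$ under projection killing the $E\setminus F$ coordinates, but the orbit lives in the finite coordinates indexed by $F$, so this is $L_F = \pi_F(L)$; I would spell this identification out carefully.

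Next, for the third bullet (that $Y_\cA$ is the disjoint union of these orbits), the affine paving perspective does the work: given any $q\in Y_\cA$, let $F = \{e\mid q_e\neq\infty\}$. I would show $q$ lies in the orbit $L\cdot p_F$ by arguing that $q$ is a limit of $\ell_n\in L$, that $F$ must be a flat (otherwise the limit would force an additional coordinate to $\infty$, by the same bounded/unbounded dichotomy and a dependency among the $\chi_e$), and that the finite part $(q_e)_{e\in F}$ lies in $L_F = \pi_F(L)$ — since $\pi_F(\overline{L})\subset\overline{\pi_F(L)} = \overline{L_F}$, but $L_F$ is a linear subspace of $V_F$ which is already closed in $V_F$, so the finite part of $q$ is in $L_F$, and then translating by a preimage in $L$ moves $q$ into the $\infty$-coordinates normal form $p_F$. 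Uniqueness is clear since $F$ is recovered from $q$ as the set of finite coordinates, and the orbit determines a coset in $L_F$. The last bullet — $L_F\subset\overline{L_G}$ iff $F\leq G$ — follows by a limiting argument: if $F\leq G$, degenerate within $L_G$ by sending the coordinates in $G\setminus F$ to $\infty$ (possible since those $\chi_e$ extend the ones cutting out $L_F$), landing in the closure; conversely, if $L_F$ meets $\overline{L_G}$, then every finite coordinate of points in $L_F$ is a limit of finite coordinates from $L_G$, forcing $F\subset G$ and then, because $F$ is a flat, $\overline{F}^{M_\cA}\subseteq G$ is automatic while $F$ being the set of finite coordinates forces $F$ to be exactly a flat below $G$; I would check $F\leq G$ in the lattice $\cL_{M_\cA}$, i.e. $F$ is a flat contained in $G$.

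The main obstacle I anticipate is the bookkeeping in the first bullet — correctly translating "$p_F$ is in the closure of $L$" into "$F$ is a flat of $M_\cA$" — because it requires carefully choosing the right affine chart of $\prod\P^1$ around $p_F$ and tracking which coordinates of a nearby point of $L$ are the affine ones versus the reciprocal ones, then extracting the matroid condition from the resulting system of (in)equalities on the $\chi_e$. Everything else reduces to linear algebra and the fact that linear subspaces are closed. Since the lemma is quoted from \cite[Proposition 4.10]{KLS}, I would likely present this as a sketch, citing that reference for the full details while indicating the argument above, and flagging the indexing warning (these are flats of $M_\cA$, the dual matroid to the one used in Section \ref{sec:sheaves}).
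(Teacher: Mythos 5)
The paper offers no proof of this lemma---it is quoted verbatim from \cite[Proposition 4.10]{KLS}---so there is no internal argument to compare against. Your sketch follows what I take to be the standard route (coordinatewise chart analysis, a one-parameter degeneration, and a direct stabilizer computation), and you correctly flagged that the displayed quotient should read $L/(L\cap V_{E\setminus F})$, not $L/(L\cap V_F)$, to match the stabilizer and the identification with $L_F=\pi_F(L)$.

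However, that same $V_F$-versus-$V_{E\setminus F}$ confusion leaks into your argument for the first bullet, where it actually matters. The subspace you should rescale is $L^F:=L\cap V_{E\setminus F}=\{\ell\in L:\chi_e(\ell)=0\ \text{for all}\ e\in F\}$, not $L\cap V_F$. The clean argument: if $F$ is a flat, then for every $e\notin F$ the functional $\chi_e$ is nonzero on $L^F$ (since $(L^F)^\perp=\operatorname{span}\{\chi_f:f\in F\}$ and $F$ flat means $\chi_e$ lies outside that span), so a generic $v\in L^F$ has $\chi_e(v)\neq 0$ for all $e\notin F$; the algebraic curve $t\mapsto t^{-1}v$ in $L$ then extends to $t=0$ in $\PA$ with limit $p_F$, proving $p_F\in Y_\cA$. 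Conversely, if $F$ is not a flat, pick $e_0\notin F$ with $\chi_{e_0}=\sum_{f\in F}c_f\chi_f$ on $L$; this linear relation among coordinate functions persists on the Zariski closure $Y_\cA$, so any $q\in Y_\cA$ with finite $F$-coordinates must have \emph{finite} $e_0$-coordinate---note this is the opposite of your ``forces an additional coordinate to $\infty$''---so $p_F\notin Y_\cA$. With $L^F$ in place of $L\cap V_F$ and the sign of that implication corrected, your bullets two through four go through as you indicate: the stabilizer computation is immediate (translation fixes $\infty$, and fixes $0\in V_e$ iff $\chi_e(\ell)=0$), the stratification follows from the dependency argument above plus the fact that $L_F$ is closed in $V_F$, and the closure order follows from $\{\infty\}$ being closed in $\overline{V_e}$ together with the first bullet applied to $\cA_G$. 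Finally, since the field is only assumed algebraically closed, replace ``limit of a sequence'' throughout with the one-parameter family $t\mapsto t^{-1}v$ or with specialization along a DVR; the argument is the same, but the metric-topology language is not literally available.
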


Lemma \ref{strat} describes a stratification of $Y_\cA$.  Our next goal is to describe $Y_\cA$ in the neighborhood of a stratum.
For any flat $F$, we have a closed embedding $\iota_F:Y_{\cA^F}\to Y_\cA$ given by the formula
$$\iota_F(p)_e := \begin{cases}
0 &\text{if $e\in F$}\\
p_e &\text{if $e\notin F$.}
\end{cases}$$
Let $$U_F :=  \bigsqcup_{F\leq G} L_G = Y_\cA \setminus \bigcup_{F\not\leq G} \overline{L_G}.$$ 
The second description shows that $U_F$ is an open subscheme of $Y_\cA$.

Choose an arbitrary section $s:L_F\to L$ of the projection of $L$ onto $L_F$.  For any flat $G\geq F$, 
composing $s$ with the projection of $L$ onto $L_G$ gives a splitting $s_G:L_F\to L_G$ of the projection of $L_G$ onto $L_F$.

\begin{lemma}\label{neighborhood}
We have an isomorphism $\varphi:L_F\times Y_{\cA^F}\to U_F$ given by the formula
$$\varphi(\a,p) := s(\a)\cdot \iota_F(p).$$
\end{lemma}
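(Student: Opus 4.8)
The plan is to prove that $\varphi\colon L_F\times Y_{\cA^F}\to U_F$ defined by $\varphi(\a,p):= s(\a)\cdot\iota_F(p)$ is an isomorphism by exhibiting an explicit inverse, using the stratification from Lemma \ref{strat}. First I would check that $\varphi$ lands in $U_F$: since $\iota_F$ maps $Y_{\cA^F}$ onto $\bigsqcup_{G\geq F}L_{G}/(L_F)$ sitting inside $Y_\cA$ — more precisely, $\iota_F(L_{(G\setminus F)})$ is the stratum indexed by $G$ once we identify flats of $M_{\cA^F}$ with flats of $M_\cA$ containing $F$ — and since translating by $s(\a)\in L$ preserves each such stratum set-theoretically (the $L$-action permutes strata trivially, stabilizing each $L_G$), the image of $\varphi$ is exactly $\bigsqcup_{G\geq F}L_G = U_F$. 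This also shows $\varphi$ is surjective on points.

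Next I would build the inverse morphism. Given a point $q\in U_F$, it lies in a unique stratum $L_G$ with $G\geq F$ by Lemma \ref{strat}; project the $L$-translation data to recover $\a\in L_F$ and then use $s_G\colon L_F\to L_G$ to translate $q$ back. Concretely, consider the coordinate projection $\pi_F\colon\PA\to\prod_{e\in F}\overline{V}_e$ restricted to $U_F$. Because every point of $U_F$ has the shape $s(\a)\cdot\iota_F(p)$ and $\iota_F(p)_e=0$ for $e\in F$ while $s(\a)_e = \a_e$ for $e\in F$ (here using that $s$ is a section of $L\to L_F$, so the $F$-coordinates of $s(\a)$ are literally $\a$), the map $\pi_F|_{U_F}$ is exactly "first coordinate of $\varphi^{-1}$" and is a morphism $U_F\to L_F\subset\prod_{e\in F}\overline V_e$. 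Having extracted $\a$, the assignment $q\mapsto (-s(\a))\cdot q$ translates $q$ into the locus where all $F$-coordinates vanish, which is precisely $\iota_F(Y_{\cA^F})\cong Y_{\cA^F}$; composing with $\iota_F^{-1}$ gives the second coordinate of $\varphi^{-1}$. Both pieces are morphisms because the $V$-action on $\PA$ is algebraic and $\pi_F$ is a coordinate projection, so the inverse is a morphism. One then checks $\varphi^{-1}\circ\varphi=\id$ and $\varphi\circ\varphi^{-1}=\id$ by a direct computation using that $s$ and $s_G$ agree after projecting to $L_G$, and that $\iota_F$ intertwines the translation actions appropriately.

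The main obstacle I expect is the bookkeeping around the two identifications being used simultaneously: the identification of flats of $M_{\cA^F}$ with the interval $[F,E]$ in the lattice of flats of $M_\cA$, and the compatibility of the sections $s_G$ (obtained by projecting the single chosen section $s$) with the translation actions $L_G\curvearrowright \overline{L_G}\subset Y_{\cA^F}$. The subtle point is that $s$ is genuinely a section of $L\to L_F$ but only a \emph{pseudo-section} after projecting to $L_G$ — one must verify $s_G$ really splits $L_G\to L_F$, which is Lemma \ref{strat}'s setup, and that translating by $s(\a)$ versus $s_G(\a)$ has the same effect on the stratum $L_G$ (true because $s(\a)$ and $s_G(\a)$ differ by an element of $L\cap V_G = \ker(L\to L_G)$, which is contained in the stabilizer $L\cap V_{E\setminus(E\setminus G)}$... here I would be careful with the $F$ versus $E\setminus F$ convention in Lemma \ref{strat}). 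Once this translation-invariance is pinned down, surjectivity and injectivity on points follow from the uniqueness clause in Lemma \ref{strat}, and since both source and target are reduced (indeed $Y_\cA$ is normal by Theorem \ref{rational}, hence so is the open subscheme $U_F$, and $L_F\times Y_{\cA^F}$ is a product of such), a bijective morphism of varieties over an algebraically closed field that admits a morphism inverse is an isomorphism; providing the explicit inverse sidesteps any need to invoke normality for this last step at all.
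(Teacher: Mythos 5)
Your approach is correct but takes a different route from the paper. The paper also sets up the stratum-by-stratum comparison to conclude that $\varphi$ is a bijection, but then closes the argument abstractly: $Y_\cA$ is normal by Theorem \ref{rational}, hence $U_F$ is normal, and Zariski's main theorem upgrades the bijective birational morphism $\varphi$ to an isomorphism. You instead construct the inverse morphism explicitly, $q\mapsto\bigl(\pi_F(q),\ \iota_F^{-1}((-s(\pi_F(q)))\cdot q)\bigr)$, which is a nice constructive alternative: in principle it only needs $U_F$ to be reduced (automatic, since $Y_\cA$ is a variety), not normal, so it does not logically depend on Brion's theorem at this step. To make the construction airtight you should spell out the one point you glide over: the morphism $q\mapsto(-s(\pi_F(q)))\cdot q$ from $U_F$ into $Y_\cA$ lands set-theoretically inside the closed subscheme $\iota_F(Y_{\cA^F})$, and because $U_F$ is reduced this is enough for it to factor scheme-theoretically through that closed subscheme, at which point $\iota_F^{-1}$ applies. (The same reducedness remark is also what lets you say $\pi_F|_{U_F}$ factors through the closed subscheme $L_F\subset V_F$.) Two small slips worth fixing: the kernel of $L\to L_G$ is $L\cap V_{E\setminus G}$, not $L\cap V_G$; and $s_G(\a)$ lives in $L_G$, not in $L$, so the cleaner statement is that $s(\a)\in L$ acts on the stratum $L_G$ through its image $\pi_G(s(\a))=s_G(\a)$, which is exactly the splitting the paper refers to. Finally, your closing sentence is redundant: a morphism that admits a two-sided morphism inverse is an isomorphism by definition, so no appeal to bijectivity or the algebraically closed hypothesis is needed there.
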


\begin{proof}
The poset of flats of $M_{\cA^F}$ is isomorphic to the interval $[F,E]$ in the poset of flats of $M_{\cA}$, so both the source and target
have stratifications indexed by the same posets.  In the source, the stratum corresponding to a flat $G\geq F$ is isomorphic to
$L_F \times L_G/L_F$, and in the target it is isomorphic to $L_G$.  
The map $\varphi$ is compatible with these stratifications, and the restriction of $\varphi$ to the stratum $L_F \times L_G/L_F$ coincides with the isomorphism induced by the splitting $s_G$.
Since $\varphi$ restricts to a stratum-by-stratum isomorphism, it is a bijection.
The variety $Y_{\cA}$ is normal by Theorem \ref{rational}, therefore so is $U_F$, thus Zariski's main theorem implies that $\varphi$ is an isomorphism.
\end{proof}

\begin{proof}[Proof of Theorem \ref{Schubert}]
Consider the special case of Lemma \ref{neighborhood} where $F$ is a flat of corank 1.  In this case, the subspace $L^F := L\cap V_{E\setminus F}\subset L$ is 1-dimensional,
we have $Y_{\cA^F} \cong \overline{L^F} \cong \P^1$, and for any $\a\in L_F$, $\varphi(\a,L^F) = s(\a) + L^F$ is an affine translate of $L^F$.

Let $f$ be a rational function on $Y_\cA$ whose restriction to $L$ is regular.  Then 
$$\operatorname{ord}_{D_F} f = \operatorname{ord}_{L_F} (f|_{U_F}) = \operatorname{ord}_{L_F \times\{\infty\}} \varphi^*(f|_{U_F}),$$
which is equal to the degree of $f$ when restricted to a generic affine translate of the line $L^F \subset L$.
It follows that  $H^0(\cO_{Y_\cA}(D_k))$ is equal to the set of polynomials $f\in \Sym L^*$ such that, for all corank 1 flats $F$, the restriction of $f$ to 
any affine translate of $L^F$ has degree at most $m(F)+k$.
This is equal to $C_{\cA,k}$ by \cite[Proposition 2.6]{ArPo} and \cite[Lemma 1]{ArPo-correction}.
\end{proof}

\subsection{The dualizing sheaf}
Let
$$U := \bigcup_{\operatorname{crk} F = 1} U_F \subset Y_\cA$$
be the union of the dense stratum and all strata of codimension 1.
Let $\omega_{Y_{\cA}}$ be the pushforward of the canonical sheaf $\omega_U$, which is
Weil divisorial and is a dualizing sheaf because $Y_\cA$ is normal and Cohen--Macaulay \cite[Theorems 8.0.4 and 9.0.9]{CLS}.
Let $$K_\cA:= -2\sum_{\operatorname{crk} F = 1}D_F.$$

\begin{proposition}\label{canonical}
  There is a canonical isomorphism 
  $\omega_{Y_\cA} \cong \det(L^*) \otimes \cO_{Y_{\cA}}(K_{\cA})$.
\end{proposition}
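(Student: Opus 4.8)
The plan is to build the isomorphism over the smooth locus containing all strata of codimension $\le 1$ and then extend it across the remaining codimension $\ge 2$ locus by reflexivity. Both $\omega_{Y_\cA}$ — the pushforward of the Weil divisorial sheaf $\omega_U$ — and $\cO_{Y_\cA}(K_\cA)$ are rank one reflexive sheaves on the normal variety $Y_\cA$ (Theorem \ref{rational}), so it suffices to exhibit a canonical isomorphism between them on an open set whose complement has codimension $\ge 2$; such an isomorphism then has a unique extension to $Y_\cA$. By Lemma \ref{strat} the complement of $L$ in $Y_\cA$ is the union of the orbit closures $\overline{L_F}$ over flats $F$ of $M_\cA$ with $\operatorname{crk} F\ge 1$, and its codimension one part is precisely $\bigcup_{\operatorname{crk} F=1}D_F$; in particular $U$ is exactly $L$ together with the disjoint codimension one strata $L_F$.

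First I would trivialize $\det(L^*)^{-1}\otimes\omega_{Y_\cA}$ over the dense stratum $L$. Since $L$ is an affine space, $\Omega^1_L\cong L^*\otimes\cO_L$ canonically, hence $\omega_L\cong\det(L^*)\otimes\cO_L$ canonically; and $\cO_{Y_\cA}(K_\cA)$ is canonically trivial on $L$ because $K_\cA$ is supported on $Y_\cA\setminus L$. This exhibits $\det(L^*)^{-1}\otimes\omega_{Y_\cA}$ as $\cO_{Y_\cA}(D)$ for a Weil divisor $D$ supported on $Y_\cA\setminus L$, and since the sheaf is reflexive only the codimension one components of $D$ matter, so $D=\sum_{\operatorname{crk} F=1}a_F D_F$ for integers $a_F=\operatorname{ord}_{D_F}(\sigma)$, where $\sigma$ is the rational section corresponding to a constant top form on $L$. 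The proposition thus reduces to showing $a_F=-2$ for every corank one flat $F$.

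To compute $a_F$ I would pass to the local model of Lemma \ref{neighborhood} at a corank one flat $F$. There $L^F:=L\cap V_{E\setminus F}$ is a line, $Y_{\cA^F}\cong\overline{L^F}\cong\P^1$ (as recalled in the proof of Theorem \ref{Schubert}), and $U_F\cong L_F\times Y_{\cA^F}$; under this isomorphism $L\cap U_F$ corresponds to $L_F\times L^F$ (the affine part of the $\P^1$) and $D_F\cap U_F$ corresponds to $L_F\times\{q\}$, where $q$ is the unique point of $Y_{\cA^F}$ not on $L^F$. Choosing affine coordinates $x_1,\dots,x_{d-1},y$ on $L$ (with $d=\dim L$) so that $x_1,\dots,x_{d-1}$ are pulled back from $L_F$ and $y$ from $L^F$, the section $\sigma$ is represented by $dx_1\wedge\cdots\wedge dx_{d-1}\wedge dy$. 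In a local coordinate $u$ on the $\P^1$-factor vanishing along $D_F$ one has $y=c/u$ for some $c\neq 0$, so $dy=-c\,u^{-2}\,du$; since $dx_1\wedge\cdots\wedge dx_{d-1}\wedge du$ generates $\omega_{U_F}$ at the generic point of $D_F$, the form has a pole of order exactly $2$ there. Hence $a_F=-2$, so $D=K_\cA$ and $\det(L^*)^{-1}\otimes\omega_{Y_\cA}\cong\cO_{Y_\cA}(K_\cA)$, i.e. $\omega_{Y_\cA}\cong\det(L^*)\otimes\cO_{Y_\cA}(K_\cA)$. All steps use only canonical constructions, so the isomorphism is $\Aut(\cA)$-equivariant.

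The one point requiring a little care is making this a genuine computation rather than a choice-dependent one: the coordinates $x_i,y$ depend on the auxiliary section $s$ of Lemma \ref{neighborhood}, but the order of vanishing of a rational section of a line bundle along a prime divisor is intrinsic, so $a_F$ is well defined. Equivalently, one checks that $dx_1\wedge\cdots\wedge dx_{d-1}\wedge dy$ is the image of a decomposable tensor under the canonical isomorphism $\det(L_F^*)\otimes\det((L^F)^*)\xrightarrow{\sim}\det(L^*)$ arising from $0\to L^F\to L\to L_F\to 0$, which is what glues the trivialization over $L$ to the product description $\omega_{U_F}\cong\det(L_F^*)\otimes\det((L^F)^*)\otimes\cO_{U_F}(-2D_F)$. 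Everything else is a routine unwinding of the stratification of $Y_\cA$ from Lemma \ref{strat}.
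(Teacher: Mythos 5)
Your proposal is correct and takes essentially the same approach as the paper: both proofs regard a constant top form on $L$ as a rational section of $\omega_{Y_\cA}$, reduce to computing its pole order along each $D_F$, and then use the local product model of Lemma \ref{neighborhood} to identify $U_F \cong L_F \times \P^1$ and read off the pole order $2$ from $dy = -c\,u^{-2}\,du$. The only cosmetic difference is that the paper invokes $L$-invariance of the divisor of poles to see it is a sum of the $D_F$'s, while you deduce this from the stratification directly; your added care about the choice-independence of the local coordinates, and about the reflexive extension across codimension $\ge 2$, fills in details the paper leaves implicit.
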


\begin{proof}
Let $\Omega\in\det(L^*)$ be a nonvanishing volume form on $L$, which is unique up to scale, and which we can regard as a rational volume form on $Y_\cA$.
Then
  \[
    \omega_{Y_\cA} \cong \det(L^*) \otimes \cO_{Y_\cA}(D),
  \]
  where $D$ is the divisor of poles of $\Omega$.
  It remains only to show that $D = K_\cA$.
  
  Since $D$ is preserved by the action of $L$, we have $D = \sum_F c_F D_F$ for some collection of integers $\{c_F\}$.
  Fix a particular corank 1 flat $F$.  By Lemma~\ref{neighborhood}, there exists and isomorphism $$U_F \cong L_F \times Y_{\cA^F} \cong 
  L_F \times \P^1,$$ where the 
  inclusion $L \subset U_F \cong L_F \times \P^1$ is a linear isomorphism
  onto $L_F \times \A^1$.  Therefore $c_F$ is equal to the order of the volume form $dz$ at $\infty\in\P^1$, which is equal to $-2$.
\end{proof}

\subsection{Equivariant minimal free resolution}\label{sec:resolution}
Before proving Theorem~\ref{resolution}, we explain why it is possible to choose a minimal free resolution of $\cO_{Y_{\cA}}$ that
is equivariant with respect to the action of $\Aut(\cA)$. Since $\Aut(\cA)$ 
is an extention of a finite group by a torus, this reduces to a statement about finite group equivariant multigraded minimal free resolutions
of modules over rings.
Let $$R = \bigoplus_{\mathbf{a} \in \mathbf{N}^k} 
R_{\mathbf{a}}$$ be a multigraded $\F$-algebra.  Let $\Gamma$ be a finite group acting on both $\N^k$ and $R$, with the property that $\gamma\cdot R_{\mathbf{a}}\subset R_{\gamma(\mathbf{a})}$
for all $\gamma\in\Gamma$ and $\mathbf{a} \in \mathbf{N}^k$.
Let $M$ be a finitely generated $\Gamma$-equivariant multigraded $R$-module.

\begin{lemma}\label{module res}
The module $M$ admits a minimal free resolution of the form
  \[
    \ldots \to Q_2 \otimes R \to Q_1 \otimes R \to Q_0 \otimes R \to M 
    \to 0,
  \]
  where each $Q_i$ is a $\Gamma$-representation and the maps are 
  $\Gamma$-equivariant.
\end{lemma}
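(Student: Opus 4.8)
The statement is a standard "equivariant version of the minimal free resolution" claim, and the natural approach is to build the resolution one step at a time, at each stage taking the minimal generators of a syzygy module and observing that the span of these generators can be chosen to be a $\Gamma$-subrepresentation. The key point is that for a finitely generated multigraded module over a multigraded algebra $R$ with $R_0 = \F$ and $R$ generated in positive degrees, a minimal generating set is, degree by degree, a choice of vector-space splitting of $M \to M/\mathfrak{m}M$, where $\mathfrak{m} = \bigoplus_{\mathbf{a}\neq 0} R_\mathbf{a}$. The quotient $M/\mathfrak{m}M$ is naturally a $\Gamma$-equivariant multigraded vector space, and since $\Gamma$ is finite and $\F$ has characteristic zero, $M \to M/\mathfrak{m}M$ admits a $\Gamma$-equivariant multigraded splitting (average an arbitrary linear splitting over $\Gamma$, working in each multidegree separately, which is legitimate because $\Gamma$ permutes the finitely many multidegrees). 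Here I am implicitly using that although $R_0 = \F$ might fail in general, in our situation $R$ is a polynomial ring and this causes no trouble; more carefully, one works with the irrelevant ideal $\mathfrak{m}$ generated by all positive-degree elements and notes $M/\mathfrak{m}M$ is finite-dimensional because $M$ is finitely generated.

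**The steps, in order.** First I would set $Q_0 := M/\mathfrak{m}M$, choose a $\Gamma$-equivariant multigraded splitting $\sigma_0: Q_0 \hookrightarrow M$, and let $Q_0 \otimes R \to M$ be the induced map, which is $\Gamma$-equivariant (diagonal action on the tensor product) and surjective by Nakayama. Second, let $M_1 := \ker(Q_0 \otimes R \to M)$; this is again a finitely generated $\Gamma$-equivariant multigraded $R$-module — finitely generated because $R$ is Noetherian (it is a polynomial ring, or a localization thereof) and $Q_0 \otimes R$ is. Third, repeat the construction with $M_1$ in place of $M$ to obtain $Q_1 \otimes R \to Q_0 \otimes R$ with image $M_1$, and iterate. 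Fourth, I would check minimality: the resolution is minimal iff each differential sends generators into $\mathfrak{m}$ times the next module, which holds by construction since $Q_{i} \hookrightarrow M_{i} \subset \mathfrak{m}(Q_{i-1}\otimes R)$ — the syzygy module $M_i$ has no degree-zero part relative to the generators of $Q_{i-1}\otimes R$ precisely because we chose $\sigma_{i-1}$ to hit a basis of $M_{i-1}/\mathfrak{m}M_{i-1}$. Finally, I would remark that in the geometric application of Theorem~\ref{resolution}, one applies this lemma to $R = \bigoplus_{\mathbf{a}\in\N^E} H^0(\P_\cA, \cO_{\PA}(\mathbf{a}))$ (the multigraded Cox-type ring of $\PA$), $M$ the corresponding module of $\cO_{Y_\cA}$, and $\Gamma$ the finite part of $\Aut(\cA)$, with the torus $\Gm^{?}\subset\Aut(\cA)$ part of the multigrading itself; sheafifying a $\Gamma$-equivariant multigraded free resolution of modules then yields the desired equivariant resolution of sheaves.

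**The main obstacle.** The genuinely delicate point is not the averaging — that is routine in characteristic zero — but making sure the hypotheses line up so that $M/\mathfrak{m}M$ is finite-dimensional in each multidegree and the induction terminates or at least produces a well-defined (possibly infinite) complex, and that "minimal" is the right notion when $R_\mathbf{0}$ may be bigger than $\F$ or when $R$ is not local. In our setting $R$ is a polynomial ring, $R_\mathbf{0} = \F$, and everything is genuinely multigraded with finite-dimensional graded pieces, so the classical theory of minimal graded free resolutions applies verbatim; the only new content is carrying the $\Gamma$-action along, and that is exactly what the equivariant splitting provides. I would also note that I do not need the resolution to be finite (Hilbert's syzygy theorem would give that, but it is irrelevant to the statement as written), so I can state the lemma for arbitrary-length resolutions without comment.
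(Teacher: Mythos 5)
Your proof is correct and is essentially the paper's own argument: construct the resolution step by step, taking $Q_i := K_{i-1}\otimes_R \F$ (your $M_i/\mathfrak{m}M_i$), averaging a linear splitting over the finite group $\Gamma$ to get an equivariant multigraded section, and observing that minimality follows because each syzygy kernel lands in $\mathfrak{m}(Q_{i-1}\otimes R)$. The only small inaccuracy is in your side remark about the application: the paper applies the lemma with $R = \bigotimes_{e\in E}\Sym(V_e\oplus\F)$ graded by $\N^{|E|+c}$ (with $c$ extra gradings coming from the connected torus in $\Aut(\cA)$), not merely the $\N^E$-graded Cox ring; but that is outside the lemma itself and does not affect the proof.
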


\begin{proof}
Consider the surjection $M \twoheadrightarrow M \otimes_R \F$ of multigraded representations of $\Gamma$. 
By averaging, we can construct a $\Gamma$-equivariant section
$s_0:M \otimes_R \F \to M$. Let $Q_0 := M \otimes_R \F$, and consider the map \[
    d_0 := s_0 \otimes \operatorname{id}: Q_0 \otimes R \twoheadrightarrow M.
  \]
Let $K_0 := \ker d_0$, and let $s_1$ be a $\Gamma$-equivariant section 
  of the surjection $K_0 \twoheadrightarrow K_0 \otimes_R \F$. Now let 
  $Q_1 := K_0 \otimes_{R} \F$, and define $d_1 := s_1 \otimes 
  \operatorname{id}$. Continuing this process produces the desired 
  resolution.
\end{proof}

We will apply Lemma \ref{module res} to $$R = \bigotimes_{e\in E} \Sym(V_e\oplus \F)$$
with $k = |E| + c$, where $c = \dim \Aut(\cA)$ is equal to the number of connected components of the matroid $M_\cA$,
and $\Gamma$ is equal to the group of connected components of $\Aut(\cA)$.  This gives us the following result.

\begin{corollary}\label{emfr}
Any $\Aut(\cA)$-equivariant coherent sheaf on $\P_\cA$ admits a minimal free resolution by $\Aut(\cA)$-equivariant sheaves
with $\Aut(\cA)$-equivariant differentials.
\end{corollary}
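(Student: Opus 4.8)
The plan is to transport $\cF$ to an equivariant multigraded module over the Cox ring of $\PA$, to invoke Lemma~\ref{module res}, and to sheafify. First I would recall that $\PA=\prod_{e\in E}\P(V_e\oplus\F)$ is a smooth projective toric variety (a product of projective lines) whose Cox ring is precisely $R=\bigotimes_{e\in E}\Sym(V_e\oplus\F)$, graded by $\operatorname{Pic}(\PA)\cong\Z^E$ with one $\N$-grading per tensor factor, and that by Cox's theorem \cite{CLS} every coherent sheaf on $\PA$ has the form $\widetilde M$ for a finitely generated $\N^E$-graded $R$-module $M$ — for instance $M=\bigoplus_{\ba\geq\ba_0}H^0(\PA,\cF(\ba))$ for $\ba_0$ sufficiently deep in the ample cone — with $\widetilde{(-)}$ exact and $\widetilde{R(-\ba)}=\cO_{\PA}(-\ba)$.

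Next I would fold in the automorphisms. The identity component of $\Aut(\cA)$ is a torus $T$ of dimension $c$ that acts on each line $V_e$ by rescaling and fixes the added coordinate of $V_e\oplus\F$; recording these $T$-weights refines the $\N^E$-grading of $R$ to the $\N^{|E|+c}$-grading appearing in Lemma~\ref{module res} with $k=|E|+c$, while the finite quotient $\Gamma=\Aut(\cA)/T$ acts on $R$ compatibly and permutes the coordinates of $\N^{|E|+c}$. Carrying group actions along the correspondence above, an $\Aut(\cA)$-equivariant coherent sheaf $\cF$ arises from a finitely generated $\Gamma$-equivariant $\N^{|E|+c}$-graded $R$-module $M$ with $\widetilde M\cong\cF$ (e.g.\ the truncation above with $\ba_0$ taken $\Gamma$-invariant) — exactly the input to Lemma~\ref{module res}.

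Then I would apply Lemma~\ref{module res} to $M$ over $R$, producing a minimal free resolution $\cdots\to Q_1\otimes R\to Q_0\otimes R\to M\to 0$ in which each $Q_i$ is a finite-dimensional $\N^{|E|+c}$-graded $\Gamma$-representation — equivalently an $\Aut(\cA)$-representation, via its decomposition into $T$-weight spaces permuted by $\Gamma$ — with $\Aut(\cA)$-equivariant differentials having entries in the homogeneous maximal ideal $\mathfrak{m}\subset R$, and with finitely many terms by the Hilbert syzygy theorem. Sheafifying term by term yields an exact complex of $\Aut(\cA)$-equivariant locally free sheaves on $\PA$ — finite direct sums of twists $\cO_{\PA}(-\ba)$ with the $Q_i$ as multiplicity spaces — resolving $\widetilde M\cong\cF$; it is minimal because the differentials have entries in $\mathfrak{m}$, which sheafify to non-invertible sections of line bundles on the positive-dimensional variety $\PA$. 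This is precisely the input used to produce the equivariant resolution in Theorem~\ref{resolution}.

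All of the substance lies in Lemma~\ref{module res}. The one point that genuinely requires care — and hence the main obstacle — is the equivalence between $\Aut(\cA)$-equivariant coherent sheaves on $\PA$ and finitely generated $\Gamma$-equivariant $\N^{|E|+c}$-graded $R$-modules: this is where the hypothesis that $\Aut(\cA)$ is an extension of a finite group by a torus enters, since one must absorb the torus $T$ into the extra $\N^c$ coordinates of the multigrading so that the finite-group argument of Lemma~\ref{module res} handles the rest. The remaining points — finiteness of the resolution, and preservation of minimality under sheafification — are routine.
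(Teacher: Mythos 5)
Your proposal is correct and follows essentially the same route as the paper: apply Lemma~\ref{module res} to $R=\bigotimes_{e\in E}\Sym(V_e\oplus\F)$ with $k=|E|+c$ and $\Gamma=\pi_0(\Aut(\cA))$, then sheafify the resulting equivariant minimal free resolution. The paper's proof of the corollary is a single sentence stating exactly this application of Lemma~\ref{module res}; you have merely spelled out the Cox-ring correspondence between $\Aut(\cA)$-equivariant coherent sheaves on $\PA$ and $\Gamma$-equivariant $\N^{|E|+c}$-graded $R$-modules, and checked that minimality and exactness persist under sheafification, details the authors leave implicit.
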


\subsection{Identifying the syzygies}
For each element $e\in E$, define the divisor $D_0(e) := \sum_{e \not\in F} D_F$, so that 
$\pi_e^* \cO_{\overline{V_e}}(1) \cong \cO_{Y_\cA}(D_0(e))$. For each subset $S 
\subset E$, define $D_0(S) := \sum_{e \in S} D_0(e)$, so that 
$$\cO_{Y_\cA}(1_S) := \bigotimes_{e \in S} \pi_{e}^*\cO_{\overline{V_e}}(1) \cong 
\cO_{Y_\cA}(D_0(S)).$$ 

\begin{remark}\label{linearization}
Let $\Aut(\cA)_S\subset\Aut(\cA)$ be the stabilizer of the subset $S\subset E$.
The natural embedding of $\cO_{Y_\cA}(D_0(S))$ into the sheaf of 
rational functions on $Y_\cA$ induces an $\Aut(\cA)_S$-linearization of $\cO_{Y_\cA}(1_S)$.
In particular when $S=E$, it induces an $\Aut(\cA)$-linearization of $\cO_{Y_\cA}(1_E)$.
It also induces an $\Aut(\cA)$-linearization of the right-hand side of the equation in Theorem \ref{resolution}.
\end{remark}

Let $$\cdots\to \cQ_2\to\cQ_1\to\cQ_0\to\cO_{Y_\cA}\to 0$$
be an $\Aut(\cA)$-equivariant minimal free resolution of the structure 
sheaf $\cO_{Y_\cA}$ on $\P_\cA$, which exists by Corollary \ref{emfr}.  As in the statement of Theorem \ref{resolution}, let $M = M_{\cA^!}$.
It follows from \cite[Theorem 1.5]{ArBoo} that
\begin{equation*}\cQ_i \;\;\;\;\;\cong \bigoplus_{\operatorname{crk}_{M}(E\setminus S) = i}
Q_{S}\otimes \cO_{\PA}(-1_S),
\end{equation*}
where $Q_{S}$ is a representation of $\Aut(\cA)_S$.  

\begin{lemma}\label{ss}
For any subset $S\subset E$,
there is a canonical isomorphism
$$Q_S\otimes H^{|S|}( \P_{\cA_S}, \cO(-2_S))  \cong 
  H^{\operatorname{rk}_{M_{\!\cA}}\!(S)}(Y_{\cA}, \cO(-1_S)).$$
\end{lemma}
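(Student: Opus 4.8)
The plan is to isolate the multiplicity space $Q_S$ from the resolution $\cQ_\bullet$ by twisting and taking hypercohomology on $\P_\cA$. Since $\cQ_\bullet\to\cO_{Y_\cA}$ is a resolution by locally free sheaves and $\cO_{\P_\cA}(-1_S)$ is a line bundle, tensoring yields a locally free resolution of $\cO_{Y_\cA}\otimes\cO_{\P_\cA}(-1_S)$; by the projection formula this sheaf is the pushforward along $\iota\colon Y_\cA\hookrightarrow\P_\cA$ of $\cO_{Y_\cA}(-1_S)$, and since $\iota$ is a closed immersion, $\mathbb{H}^n(\P_\cA,\cQ_\bullet\otimes\cO(-1_S))\cong H^n(Y_\cA,\cO(-1_S))$ for all $n$. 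First I would write down the associated hypercohomology spectral sequence
$$E_1^{-i,j}=H^j\big(\P_\cA,\cQ_i\otimes\cO(-1_S)\big)\ \Longrightarrow\ H^{j-i}\big(Y_\cA,\cO(-1_S)\big).$$

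The key step is computing the $E_1$ page. Using $\cQ_i\cong\bigoplus_{\crk_M(E\setminus T)=i}Q_T\otimes\cO_{\P_\cA}(-1_T)$ and the Künneth formula on $\P_\cA=\prod_{e\in E}\overline{V_e}$, the cohomology of a summand $Q_T\otimes\cO(-1_T-1_S)$ is $Q_T$ tensored with $\bigotimes_{e\in E}H^\bullet\big(\overline{V_e},\cO(-[e\in T]-[e\in S])\big)$. Because $H^\bullet(\P^1,\cO(-1))=0$, the factor at any $e$ lying in exactly one of $S$ and $T$ vanishes, so the summand is zero unless $T=S$; and $T=S$ occurs among the summands of $\cQ_i$ only for $i=\crk_M(E\setminus S)$. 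For $T=S$ the surviving Künneth factors are $H^1(\overline{V_e},\cO(-2))$, one-dimensional, for $e\in S$, and $H^0(\overline{V_e},\cO)\cong\F$ for $e\notin S$; hence $H^j(\P_\cA,\cQ_i\otimes\cO(-1_S))$ vanishes unless $i=\crk_M(E\setminus S)$ and $j=|S|$, in which case it equals $Q_S\otimes H^{|S|}(\P_\cA,\cO(-2_S))\cong Q_S\otimes H^{|S|}(\P_{\cA_S},\cO(-2_S))$, the last isomorphism again by Künneth after discarding the trivial factors $H^0(\cO)\cong\F$ at $e\notin S$. So $E_1$ has a single nonzero entry, the spectral sequence degenerates, and
$$H^{|S|-\crk_M(E\setminus S)}\big(Y_\cA,\cO(-1_S)\big)\ \cong\ Q_S\otimes H^{|S|}\big(\P_{\cA_S},\cO(-2_S)\big),$$
while $H^n(Y_\cA,\cO(-1_S))=0$ for all other $n$. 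It remains to match the cohomological degree: since $M=M_{\cA^!}$ is the matroid dual of $M_\cA$, the rank formula for the dual gives $\rk_{M_\cA}(S)=|S|-\rk_M(E)+\rk_M(E\setminus S)=|S|-\crk_M(E\setminus S)$, which is exactly the exponent above.

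For the $\Aut(\cA)$-equivariance implicit in the word ``canonical'': the resolution $\cQ_\bullet$ is $\Aut(\cA)$-equivariant by Corollary \ref{emfr}, the line bundle $\cO(-1_S)$ carries the $\Aut(\cA)_S$-linearization of Remark \ref{linearization}, and every operation above is functorial, so the whole argument lives in the category of $\Aut(\cA)_S$-representations and the isomorphism produced is $\Aut(\cA)_S$-equivariant; no auxiliary choices enter beyond the minimal resolution itself, which is unique up to isomorphism. I do not expect a genuine obstacle: the argument rests entirely on the vanishing $H^\bullet(\P^1,\cO(-1))=0$, which collapses the $E_1$ page to a single term. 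The only points requiring care are the index bookkeeping---the identity $\rk_{M_\cA}(S)=|S|-\crk_M(E\setminus S)$ from matroid duality---and the convergence of the spectral sequence, which is immediate since $\cQ_\bullet$ is bounded (of length at most $\rk_M(E)=|E|-\dim L$ by \cite[Theorem 1.5]{ArBoo}).
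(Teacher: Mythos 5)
Your proof is correct and matches the paper's proof essentially line for line: both twist the minimal free resolution by $\cO(-1_S)$, use K\"unneth together with the vanishing of $H^\bullet(\P^1,\cO(-1))$ to kill every summand except $T=S$, identify the unique surviving term as $Q_S\otimes H^{|S|}(\P_{\cA_S},\cO(-2_S))$, and conclude via the hypercohomology identification and the matroid-duality rank identity $\rk_{M_\cA}(S)=|S|-\crk_M(E\setminus S)$. The only difference is cosmetic: you write out the hypercohomology spectral sequence explicitly, where the paper simply invokes the quasi-isomorphism.
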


\begin{proof}
We begin by noting that the K\"unneth theorem provides an isomorphism
$$H^{|S|}( \P_{\cA_S}, \cO(-2_S))  \cong H^{|S|}( \P_{\cA}, \cO(-2_S)).$$  
Now consider the resolution
\[
0 \to \cQ_{\operatorname{rk} M}(-1_S) \to \cdots\to 
\cQ_2(-1_S)\to\cQ_1(-1_S)\to\cQ_0(-1_S)\to\cO_{Y_\cA}(-1_S)\to 0
\]
of $\cO_{Y_\cA}(-1_S)$.
Since all cohomology groups of $\cO_{\P^1}(-1)$ vanish, the K\"unneth theorem implies that the same is true of $\cO_{\P_\cA}(-1_S-1_T)$ unless $S=T$.
It follows that the cohomology of $\cQ_i(-1_S)$ vanishes unless $i=\operatorname{crk}_{M}(E\setminus S)$,
in which case it is isomorphic to
$$Q_S \otimes H^{|S|}(\P_\cA, \cO_{\P_\cA}(-2_S)).$$
If we regard $\cQ_\bullet(-1_S)$ as a complex of sheaves with $\cQ_i(-1_S)$ sitting in degree $-i$, this complex is quasi-isomorphic to $\cO_{Y_\cA}(-1_S)$, and we therefore have
$$Q_{S} \otimes H^{|S|}(\P_\cA, \cO_{\P_\cA}(-2_S))\cong \mathbb{H}^{|S|-\operatorname{crk}_{M}(E\setminus S)}(\P_\A, \cQ_\bullet(-1_S))
\cong H^{|S|-\operatorname{crk}_{M}(E\setminus S)}(Y_\cA, \cO(-1_S)).$$
The lemma now follows from the fact that $|S|-\operatorname{crk}_{M} = \rk_{M^!}(S) = \rk_{M_\cA}(S)$.
\end{proof}

Consider the projection map $\pi_S:\P_{\cA} \to \P_{\cA_S}$ along with its restriction $\pi_S:Y_{\cA} \to Y_{\cA_S}$. 

\begin{lemma}\label{resolve in stages}
The pullback
$$\pi_S^*:H^{\operatorname{rk}_{M_{\!\cA}}\!(S)}(Y_{\cA_S}, \cO(-1_S))\to H^{\operatorname{rk}_{M_{\!\cA}}\!(S)}(Y_{\cA}, \cO(-1_S))$$
is an isomorphism.
\end{lemma}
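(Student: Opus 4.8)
The plan is to deduce the lemma from the stronger assertion that $R(\pi_S)_*\cO_{Y_\cA}\cong\cO_{Y_{\cA_S}}$. Granting this, observe that $\cO(-1_S)$ on $Y_\cA$ is the pullback along $\pi_S$ of the corresponding sheaf on $Y_{\cA_S}$, because each coordinate projection $\pi_e$ with $e\in S$ factors through $\pi_S$. The projection formula then gives $R(\pi_S)_*\big(\pi_S^*\cO(-1_S)\big)\cong\cO(-1_S)\otimes R(\pi_S)_*\cO_{Y_\cA}\cong\cO(-1_S)$, so the Leray spectral sequence for $\pi_S$ degenerates and the edge homomorphism, which is $\pi_S^*$, is an isomorphism $H^i(Y_{\cA_S},\cO(-1_S))\xrightarrow{\sim}H^i(Y_\cA,\cO(-1_S))$ in every degree $i$, in particular for $i=\rk_{M_\cA}(S)$.

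To prove $R(\pi_S)_*\cO_{Y_\cA}\cong\cO_{Y_{\cA_S}}$, I would factor $\pi_S$ into maps that forget a single coordinate at a time. Localization is transitive, $(\cA_{S'})_S=\cA_S$ whenever $S\subset S'$, so writing $E\setminus S=\{e_1,\dots,e_m\}$ and $S_j:=E\setminus\{e_1,\dots,e_j\}$ exhibits $\pi_S$ as the composite $Y_\cA=Y_{\cA_{S_0}}\to Y_{\cA_{S_1}}\to\cdots\to Y_{\cA_{S_m}}=Y_{\cA_S}$, in which $Y_{\cA_{S_{j-1}}}\to Y_{\cA_{S_j}}$ forgets the coordinate $e_j$. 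Since derived pushforwards compose, it is enough to treat one forgetful map $\pi\colon Y_\cA\to Y_{\cA_{E\setminus\{e\}}}$ and show $R\pi_*\cO_{Y_\cA}\cong\cO_{Y_{\cA_{E\setminus\{e\}}}}$. If $e$ is not a coloop of $M_\cA$, then $\{\chi_f\mid f\neq e\}$ still spans $L^*$, so $L\to L_{E\setminus\{e\}}$ is an isomorphism and $\pi$ is proper and birational; by Theorem~\ref{rational} both $Y_\cA$ and $Y_{\cA_{E\setminus\{e\}}}$ have rational singularities, so a resolution $\mu\colon\widetilde Y\to Y_\cA$ yields, via $R\mu_*\cO_{\widetilde Y}=\cO_{Y_\cA}$ and the fact that $\pi\circ\mu$ is then a resolution of $Y_{\cA_{E\setminus\{e\}}}$, the chain $R\pi_*\cO_{Y_\cA}=R\pi_*R\mu_*\cO_{\widetilde Y}=R(\pi\circ\mu)_*\cO_{\widetilde Y}=\cO_{Y_{\cA_{E\setminus\{e\}}}}$. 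If $e$ is a coloop, then $L\cap V_e\neq 0$, hence $L\cap V_e$ is the whole line $V_e$; thus $V_e\subset L$ and $L=V_e\oplus(L\cap V_{E\setminus\{e\}})$ compatibly with $V=V_e\oplus V_{E\setminus\{e\}}$, so passing to closures gives $Y_\cA\cong\overline{V_e}\times Y_{\cA_{E\setminus\{e\}}}\cong\P^1\times Y_{\cA_{E\setminus\{e\}}}$ with $\pi$ the projection, and $R\pi_*\cO_{Y_\cA}\cong\cO_{Y_{\cA_{E\setminus\{e\}}}}$ follows from the K\"unneth formula together with $H^0(\P^1,\cO)=\F$ and $H^1(\P^1,\cO)=0$.

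The step requiring the most care is the non-coloop case, which rests on the standard fact that a proper birational morphism between varieties with rational singularities has $R\pi_*\cO\cong\cO$; Theorem~\ref{rational} is needed here for both the source (to reduce to a resolution) and the target (so that every resolution of it computes its structure sheaf). The other ingredients — transitivity of localization, the factorization of $\pi_e$ through $\pi_S$ for $e\in S$, the coloop/non-coloop dichotomy, and the product decomposition in the coloop case — are routine.
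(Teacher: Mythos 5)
Your proof is correct, and it reaches the conclusion by the same two fundamental facts the paper uses: that a proper birational morphism between varieties with rational singularities satisfies $R\pi_*\cO\cong\cO$, and that projecting off a trivial $\P^1$-factor does the same via K\"unneth. Where you differ is in how you factor $\pi_S$ and how you package the cohomological consequence. The paper chooses $T\subset E\setminus S$ with $|T|=\crk_{M_\cA}(S)$ and $S\cup T$ spanning $M_\cA$, and then writes $\pi_S$ as the two-step chain $Y_\cA\to Y_{\cA_{S\cup T}}\cong Y_{\cA_S}\times(\P^1)^T\to Y_{\cA_S}$: the first arrow is proper and birational (since $S\cup T$ spans, the rank does not drop) and the second is a product projection, so each of the two key facts is applied exactly once. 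You instead break $\pi_S$ into $|E\setminus S|$ one-coordinate forgetting maps, invoking a coloop/non-coloop dichotomy at each step (birational when the deleted element is not a coloop of the current restriction; product with $\P^1$ when it is), and you route the argument through the stronger-looking statement $R(\pi_S)_*\cO_{Y_\cA}\cong\cO_{Y_{\cA_S}}$, then the projection formula and Leray, rather than composing pullbacks on cohomology directly. The two routes are equivalent in content: the paper's single choice of $T$ implicitly collects all the coloop steps into the product factor $(\P^1)^T$ at once (indeed $\rk(S\cup T)=\rk(S)+|T|$ forces every element of $T$ to be a coloop of $M|_{S\cup T}$, which is what gives the splitting $Y_{\cA_{S\cup T}}\cong Y_{\cA_S}\times(\P^1)^T$), whereas you do this one coordinate at a time. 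Your version requires slightly more bookkeeping but avoids having to state the global product decomposition of $Y_{\cA_{S\cup T}}$; the paper's is a bit shorter. Both are complete.
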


\begin{proof}
Choose $T \subset E\setminus S$ such that 
$|T| = \operatorname{crk}_{M_\cA}(S)$ and $S \cup T$ is a spanning set for $M_\cA$. 
Then $\pi_S: Y_{\cA} \to Y_{\cA_S}$ can be factored as a birational 
proper morphism $$\pi_{S\cup T}:Y_{\cA}\to Y_{\cA_{S \cup T}}$$ followed by a projection 
$$\rho_{S,T}:Y_{\cA_{S \cup T}} \cong Y_{\cA_S}\times (\P^1)^T \to Y_{\cA_S}.$$
Because Schubert varieties have rational 
singularities (Theorem \ref{rational}), the induced map
\[
  \pi_{S \cup T}^*:H^{\operatorname{rk}_{M_{\!\cA}}\!(S)}(Y_{\cA_{S \cup T}}, 
  \cO(-1_S)) \to H^{\operatorname{rk}_{M_{\!\cA}}\!(S)}(Y_{\cA_S}, 
  \cO(-1_S))
\]
is also an isomorphism.  Thus the composition
$\pi_S^* = \pi_{S \cup T}^*\circ \rho_{S\cup T}^*$ is an isomorphism.
\end{proof}

\begin{proof}[Proof of Theorem \ref{resolution}]
By Lemmas \ref{ss} and \ref{resolve in stages}, we have
\begin{equation}\label{so far}Q_S\otimes H^{|S|}( \P_{\cA_S}, \cO(-2_S))  \cong H^{\operatorname{rk}_{M_{\!\cA}}\!(S)}(Y_{\cA_S}, \cO(-1_S)).\end{equation}
By Proposition~\ref{canonical}, we have $$\omega_{\P_{\!\cA_S}} \cong \det(V_S^*)\otimes \cO(-2_S)
\and \omega_{Y_{\!\cA_S}} \cong \det(L_S^*)\otimes\cO(K_{\cA_S}) \cong  \det(L_S^*)\otimes\cO(-1_S)\otimes\cO(D_{-2}),$$
thus we can rewrite Equation \eqref{so far} as
$$
\det(V_S)\otimes Q_S\otimes H^{|S|}( \P_{\cA_S}, \omega_{\P_{\!\cA_S}})  \cong 
\det(L_S)\otimes H^{\operatorname{rk}_{M_{\!\cA}}\!(S)}(Y_{\cA_S}, \omega_{Y_{\!\cA_S}}\otimes \cO(-D_{-2})).
$$
By Serre duality, we have
\beq \det(V_S)\otimes Q_S &\cong& \det(V_S)\otimes Q_S\otimes H^{|S|}( \P_{\cA_S}, \omega_{\P_{\!\cA_S}})\\
&\cong& \det(L_S)\otimes  H^{\operatorname{rk}_{M_{\!\cA}}\!(S)}(Y_{\cA_S}, \omega_{Y_{\!\cA_S}}\otimes \cO(-D_{-2}))\\
&\cong& \det(L_S)\otimes H^0(Y_{\cA_S}, \cO(D_{-2}))^*.
\eeq
The theorem now follows from the isomorphism $\det(L_S)\cong \det(V_S)\otimes\det(L_S^\perp)$.
\end{proof}

\begin{proof}[Proof of Corollary~\ref{cohomology}]
Consider the resolution 
$$\cdots\to \cQ_2(1_E)\to\cQ_1(1_E)\to\cQ_0(1_E)\to\cO_{Y_\cA}(1_E)\to 0$$
obtained from Theorem~\ref{resolution} by tensoring with 
$\cO(1_E)$. By \cite{Brion}, all of the sheaves $\cQ_i(1_E)$ as well as 
$\cO_{Y_\cA}(1_E)$ have vanishing higher cohomology, so the hypercohomology spectral sequence
tells us that their sections form an exact sequence.
Finally, $H^0(\cO_{Y_\cA}(1_E)) \cong \cR_{+}(A)^*$ by Theorem~\ref{Schubert}.
\end{proof}

\subsection{The complete graph}
Let $\cA := \cA^!_{K_n}$ be the cographical linear space associated with the complete graph $K_n$.
Then $M = M_{\cA^!} = M_{K_n}$ is the braid matroid, whose flats of corank $i$ are indexed by partitions of the set $[n]$
into $i+1$ parts.  Given such a partition $\pi = (\pi_0,\ldots,\pi_i)$, the associated flat $F_\pi$ is the union of the complete
graphs on the parts of $\pi$, whereas the complement $S_\pi:= E\setminus F_\pi$ is the complete multipartite graph consisting of edges
that go between the blocks.  Thus we have
\beq
H^0(\cQ_i(1_E)) &\cong& \bigoplus_{\crk_M{E\setminus S} = i} 
\det(L_S^\perp) \otimes \cR_-(\cA_S) \otimes \bigotimes_{e \not\in 
S}(V_e^* \oplus \Q)\\
&\cong& \bigoplus_{\substack{\pi = (\pi_0,\ldots,\pi_i)\\ [n] = \pi_0\sqcup\cdots\sqcup\pi_i}}
\det(L_{S_\pi}^\perp) \otimes \cR_-(\cA_{S_\pi}) \otimes \bigotimes_{e \in 
F_\pi}(V_e^* \oplus \Q).
\eeq
Let $\mathfrak{S}_\pi \subseteq 
\mathfrak{S}_n$ be the 
subgroup of permutations that preserves the set partition $\pi$.
We have a homomorphism $\rho_\pi:\fS_\pi\to\fS_{i+1}$ that records the way in which an element $\fS_\pi$ permutes the blocks of $\pi$,
with kernel equal to the Young subgroup $\fS_{\pi_0}\times\cdots\times\fS_{\pi_i}$.
Each of the tensor factors above is a representation of $\fS_\pi$, and we will analyze each one individually.

Let $\Gamma_\pi$ be the graph obtained from $K_n$ by contracting the edges in $F_\pi$; this is a loopless graph with vertex set $\{0,\ldots,i\}$
and $|\pi_j| \cdot |\pi_k|$ edges between distinct vertices $j$ and $k$.
Then
$$\cA_{S_\pi} = (\cA_{K_n}^!)_{S_\pi} = (\cA_{K_n}^{F_\pi})^! \cong \cA_{\Gamma_\pi}^!.$$ 
This implies that
$$L_{S_\pi} = H_1(\Gamma_\pi;\Q)\subset C_1(\Gamma_\pi;\Q),$$
and therefore $$L_{S_\pi}^\perp = B^1(\Gamma_\pi;\Q) = \ker\left( C^1(\Gamma_\pi;\Q)\to H^1(\Gamma_\pi;\Q)\right).$$
The space of coboundaries of a graph is canonically isomorphic to that of its simplification, thus
$$L_{S_\pi}^\perp = B^1(\Gamma_\pi;\Q)\cong B^1(K_{i+1}; \Q).$$
This is isomorphic to the irreducible representation $V_{[i,1]}$ of $\fS_{i+1}$,
hence $$\det(L_{S_\pi}^\perp) \cong \wedge^i(V_{[i,1]}) \cong \operatorname{sign}_{\fS_{i+1}},$$
which we regard as a representation of $\fS_\pi$ via the homomorphism $\rho_\pi:\fS_\pi\to\fS_{i+1}$.
Since the multiplicative group $\Gm\subset\Aut(\cA)$ acts on $L_{S_\pi}^\perp$ with weight $1$, we will write
$$\det(L_{S_\pi}^\perp) \cong \operatorname{sign}_{\fS_{i+1}}[-i]$$
to indicate that it is sitting in degree $i$.

Simplifying a loopless graph by deleting parallel edges does not affect the internal zonotopal algebra of the cographical linear space; this can be seen in many different ways, for example
from \cite[Theorem 1.9]{CDBHP}.  Thus we have $\fS_\pi$-equivariant isomorphisms
$$\cR_-(\cA_{S_\pi}) \cong \cR_-(\cA_{\Gamma_\pi}^!)\cong \cR_-(\cA_{K_{i+1}}^!)\cong \OTbar(\cA_{K_{i+1}})^*,$$
where the last isomorphism follows from Theorem \ref{internal}.  Note that the reduced Orlik--Terao algebra is generated in degree 1,
and dualizing negates degrees.

\begin{remark}
The graded $\fS_{i+1}$-representation $\OTbar(\cA_{K_{i+1}})$ is studied in \cite{MPY}, where in particular one can find an explicit algorithm
to compute it recursively. Alternatively, $\cR_-(\cA_{K_{i+1}}^!)$ is isomorphic
in degree $-j$ 
to $H^{2j}(X(SU(2), K_{i+1});\Q)$ 
\cite{Berget-complex-reflection, CDBHP}, which is Whitehouse's lift of an 
Eulerian representation of $\fS_i$, tensored with 
$\operatorname{sign}_{\fS_{i+1}}$ \cite[Proposition 2]{ER19}.
\end{remark}

Finally, for any subgraph $\Gamma$ of $K_n$, let $$\Theta_\Gamma := \bigotimes_{e\in E(\Gamma)} V_e^*;$$
this is a (possibly nontrivial) 1-dimensional representation of $\Aut(\Gamma)$ sitting in degree $|E(\Gamma)|$.
Then $$\bigotimes_{e \in F_\pi}(V_e^* \oplus \Q) \cong 
\bigoplus_{\Gamma\in \cF_\pi} \Theta_\Gamma,$$
where $\cF_\pi$ is the set of all subgraphs of $F_\pi$.
Putting it all together, we have
$$H^0(\cQ_i(1_E)) \;\;\cong\;\; \bigoplus_{\substack{\pi = (\pi_0,\ldots,\pi_i)\\ [n] = \pi_0\sqcup\cdots\sqcup\pi_i}}
\operatorname{sign}_{\fS_{i+1}}[i]\;\otimes\;\OTbar(\cA_{K_{i+1}})^*
\;\otimes\; \bigoplus_{\Gamma\in \cF_\pi}\Theta_\Gamma,
$$
where the first two tensor factors are regarded as representations of $\fS_\pi$ via the homomorphism $\rho_\pi$.

\begin{corollary}\label{cobraid}
We have an equality
\beq \cR_+(\cA_{K_n}^!)^* &=& \sum_{i=0}^n (-1)^i \bigoplus_{\substack{\pi = (\pi_0,\ldots,\pi_i)\\ [n] = \pi_0\sqcup\cdots\sqcup\pi_i}}
\operatorname{sign}_{\fS_{i+1}}[i]\;\otimes\;\OTbar(\cA_{K_{i+1}})^*
\;\otimes\; \bigoplus_{\Gamma\in \cF_\pi}\Theta_\Gamma\\
&=& \bigoplus_\Gamma \Theta_\Gamma\;\otimes\bigoplus_{\pi:\Gamma\in \cF_\pi}(-1)^{|\pi|-1}
\operatorname{sign}_{\fS_{|\pi|}}[|\pi|-1]\;\otimes\;\OTbar(\cA_{K_{|\pi|}})^*
\eeq
of graded virtual representations of $\fS_n$.
\end{corollary}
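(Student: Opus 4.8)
The plan is to obtain this as the Euler characteristic of the exact sequence of Corollary~\ref{cohomology}. The graph automorphism group $\fS_n=\Aut(K_n)$ embeds into $\Aut(\cA_{K_n}^!)$ through its action on edges, so the exact sequence produced by Corollary~\ref{cohomology} (applied with $\cA:=\cA_{K_n}^!$) is in particular $\fS_n$-equivariant. The underlying resolution of Theorem~\ref{resolution} has finite length, bounded by $\operatorname{rk} M_{K_n}=n-1$, so passing to the Euler characteristic in the Grothendieck group of graded $\fS_n$-representations gives
$$\cR_+(\cA_{K_n}^!)^* \;=\; \sum_{i}(-1)^i\,H^0(\cQ_i(1_E))$$
(only $0\le i\le n-1$ contributes, since a set partition of $[n]$ has at most $n$ parts). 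Substituting the description of $H^0(\cQ_i(1_E))$ established just above, with the understanding that $\bigoplus_\pi$ denotes the $\fS_n$-representation obtained by inducing the displayed $\fS_\pi$-representation from the stabilizer $\fS_\pi$ of each set partition $\pi$, yields the first displayed equality of the corollary.

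\textbf{Second equality.} This is a reordering of a double sum. The iterated sum $\bigoplus_{i}\bigoplus_{|\pi|=i+1}\bigoplus_{\Gamma\in\cF_\pi}$ is indexed by the $\fS_n$-set $P$ of pairs $(\pi,\Gamma)$ where $\pi$ is a set partition of $[n]$ and $\Gamma$ is a subgraph of $F_\pi$; the summand attached to a pair depends only on its $\fS_n$-orbit, and for a representative it is the induction to $\fS_n$, from the common stabilizer of the pair, of the appropriately restricted tensor product $\operatorname{sign}_{\fS_{|\pi|}}[|\pi|-1]\otimes\OTbar(\cA_{K_{|\pi|}})^*\otimes\Theta_\Gamma$. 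Regrouping the elements of $P$ by choosing $\Gamma$ first and then the partitions $\pi$ with $\Gamma\in\cF_\pi$, using that $|\pi|=i+1$ so that $(-1)^i=(-1)^{|\pi|-1}$ and $\operatorname{sign}_{\fS_{i+1}}[i]\otimes\OTbar(\cA_{K_{i+1}})^*=\operatorname{sign}_{\fS_{|\pi|}}[|\pi|-1]\otimes\OTbar(\cA_{K_{|\pi|}})^*$, and pulling the $\pi$-independent factor $\Theta_\Gamma$ out of the inner sum, produces the second displayed equality.

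\textbf{Main obstacle.} There is no substantive geometric or homological difficulty here beyond Corollary~\ref{cohomology} and the preceding computation. The one point requiring care is the $\fS_n$-equivariance of the reindexing: one must confirm that the $\fS_n$-module structure on $\bigoplus_\pi(\cdots)$, which a priori is assembled from representations of the various block stabilizers $\fS_\pi$, agrees with the one on $\bigoplus_\Gamma(\cdots)$ after the swap. Describing both expressions uniformly as a direct sum over $\fS_n$-orbits of pairs in $P$, with each summand induced from the common stabilizer of a chosen representative, makes this transparent.
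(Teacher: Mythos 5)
Your proof is correct and follows the same approach as the paper: take the Euler characteristic of the $\fS_n$-equivariant exact sequence from Corollary~\ref{cohomology} (applied to $\cA = \cA^!_{K_n}$), substitute the computation of $H^0(\cQ_i(1_E))$ carried out just above, and obtain the second equality by reindexing the double sum over pairs $(\pi,\Gamma)$ with $\Gamma$ chosen first. Your remark about treating both expressions as sums of inductions from stabilizers of pairs in the $\fS_n$-set $P$ is a clean way to make the equivariance of the swap explicit, but otherwise this is the argument the paper has in mind.
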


\begin{remark}
  The dimension of the left hand side of Corollary~\ref{cobraid} is the 
  number of connected graphs on $[n]$, and the dimension of 
  $\OTbar(\cA_{K_{i+1}})$ is the M\"obius function value $\mu(\pi, 
  \hat{1})$ of the partition lattice, where $\hat{1}$ is the single block partition. In this case, Remark~\ref{spanning} says that 
  Corollary~\ref{cobraid} categorifies the formula 
  \[
    \text{number of connected graphs on $[n]$} = \sum_{\pi = (\pi_0, 
    \ldots, \pi_i)} 
    \mu(\pi, \hat{1}) \cdot 2^{{|\pi_0| \choose 2} + \cdots + {|\pi_i| 
    \choose 2}},
  \]
  which is obtained via M\"obius inversion from the formula 
  expressing the total number of graphs on $[n]$ as the sum over $\pi= (\pi_0, \ldots, \pi_i)$ of the number of 
  graphs with connected components $\pi$.
\end{remark}

\begin{example}
Consider the special case where $n=3$.  We will work with the first of the two expressions for $\cR_+(\cA_{K_3}^!)^*$ in Corollary \ref{cobraid}.
\begin{itemize}
\item When $i=0$, $\pi$ is the partition of $[3]$ into a single block, 
$\operatorname{sign}_{\fS_{1}}[0]\;\otimes\;\OTbar(\cA_{K_{1}})^*$ is trivial,
and $\cF_\pi$ consists of all simple graphs on the set $[3]$.  The empty graph contributes the trivial representation $V_{[3]}$ in degree 0.
The three graphs with one edge contribute the representation $V_{[2,1]}\oplus V_{[1,1,1]}$ in degree 1; the appearance of the sign representation
is due to the fact that, if $e$ is a single edge, then the stabilizer of $e$ acts nontrivially on the line $V_e^*$.  The three graphs with two edges
contribute the representation $V_{[2,1]}\oplus V_{[3]}$ in degree 2; the appearance of the trivial representation
is due to the fact that, if $e$ and $f$ is a pair of edges, then the stabilizer of the set $\{e,f\}$ acts trivially on the line $V_e^*\otimes V_f^*$.
Finally, the graph with all three edges contributes $\Theta_{K_3} \cong V_{[1,1,1]}$ in degree 3.
\item When $i=1$, $\pi$ ranges over the three different partitions of $[3]$ into two blocks.  For each such $\pi$, the homomorphism $\rho_\pi$
is trivial (the two blocks have different sizes and therefore cannot be permuted), so $\operatorname{sign}_{\fS_{2}}[-1]\;\otimes\;\OTbar(\cA_{K_{2}})^*\cong \Q[-1]$.  For each $\pi$, $\cF_\pi$
consists of two graphs, one of which is empty and one of which contains a single edge.  
The empty graph contributes the trivial representation of $\fS_\pi\cong S_2$ in degree 0; tensoring with $\Q[-1]$ and inducing to $\fS_3$ gives $V_{[3]}\oplus V_{[2,1]}$ in degree 1.  The graph with one edge
contributes the sign representation of $\fS_\pi\cong S_2$ in degree 1; tensoring with $\Q[-1]$ and inducing to $\fS_3$ gives $V_{[2,1]}\oplus V{[1,1,1]}$ in degree 2.
\item When $i=2$, $\pi$ is the partition of $[3]$ into three blocks, $\operatorname{sign}_{\fS_{2}}[2]$ is the sign representation in degree 2, and
$\OTbar(\cA_{K_3})^*$ is trivial in degree 0 and sign in degree $-1$.  The set $\cF_{\pi}$ contains only the empty graph, and $\Theta_{\emptyset}$ is trivial in degree zero.  Thus the total contribution when $i=2$ is $V_{[1,1,1]}\otimes V_{[1,1,1]}\cong V_{[3]}$ in degree 1 plus $V_{[1,1,1]}\otimes V_{[3]}\cong V_{[1,1,1]}$ in degree 2.
\end{itemize}
Taking the alternating sum, we find that $\cR_+(\cA_{K_3}^!)^*$ is isomorphic to the trivial representation of $\fS_3$ in degrees 0 and 2 plus the sign 
representation of $\fS_3$ in degrees 1 and 3.  This can confirmed by direct calculation, as the external zonotopal algebra of $\cA_{K_3}^!$ is generated by
a single element $\a$ in degree $-1$ that is not fixed by $\fS_3$, modulo the relation $\a^4=0$.

\end{example}

\bibliographystyle{amsalpha}
\bibliography{symplectic}

\end{document}